\newtheorem{theorem}{Theorem}[section]
\newtheorem{thm}[theorem]{Theorem}
\newtheorem{proposition}[theorem]{Proposition}
\newtheorem{claim}[theorem]{Claim}
\newtheorem{lemma}[theorem]{Lemma}
\newtheorem{cor}[theorem]{Corollary}
\newtheorem{question}[theorem]{Question}
\theoremstyle{definition}
\newtheorem{remark}[theorem]{Remark}
\newtheorem{definition}[theorem]{Definition}
\newcommand{\QI}{\Pi^1_1}
\newcommand{\VI}{\Sigma^1_1}
\newcommand{\OMC}{\omega_1^{CK}}
\newcommand{\DII}{\Delta^0_2}
\newcommand{\NN}{{\mathbb{N}}}
\newcommand{\RR}{{\mathbb{R}}}
\newcommand{\R}{{\mathbb{R}}}
\newcommand{\ZZ}{{\mathbb{Z}}}
\newcommand{\sub}{\subseteq}
\newcommand{\sN}[1]{_{#1\in \NN}}
\newcommand{\uhr}[1]{\! \upharpoonright_{#1}}
\newcommand{\ML}{Martin-L{\"o}f}
\newcommand{\SI}[1]{\Sigma^0_{#1}}
\newcommand{\PI}[1]{\Pi^0_{#1}}
\newcommand{\PPI}{\PI{1}}
\newcommand{\bi}{\begin{itemize}}
\newcommand{\ei}{\end{itemize}}
\newcommand{\bc}{\begin{center}}
\newcommand{\ec}{\end{center}}
\newcommand{\ES}{\emptyset}
\newcommand{\estring}{\la \ra}
\newcommand{\ria}{\rightarrow}
\newcommand{\tp}[1]{2^{#1}}
\newcommand{\ex}{\exists}
\newcommand{\fa}{\forall}
\newcommand{\la}{\langle}
\newcommand{\ra}{\rangle}
\newcommand{\Kuc}{Ku{\v c}era}
\newcommand{\seqcantor}{2^{ \NN}}
\newcommand{\cantor}{\seqcantor}
\newcommand{\strcantor}{2^{ < \omega}}
\newcommand{\Opcl}[1]{[#1]^\prec}
\newcommand{\MLR}{\mbox{\rm \textsf{MLR}}}
\newcommand{\WTR}{\mbox{\rm \textsf{W2R}}}
\newcommand{\DR}{\mbox{\rm \textsf{DenseR}}}
\newcommand{\Om}{\Omega}
\newcommand{\n}{\noindent}
\newcommand{\vsps}{\vspace{3pt}}
\newcommand{\vsp}{\vspace{6pt}}
\newcommand{\leb}{\mathbf{\lambda}}
\newcommand{\sss}{\sigma}
\newcommand{\aaa}{\alpha}
\newcommand{\lland}{\, \land \, }
\newcommand \seq[1]{{\left\langle{#1}\right\rangle}}
\newcommand\+[1]{\mathcal{#1}}
\newcommand{\sC}{\+ C}
\newcommand{\wt}{\widetilde}
\newcommand{\ol}{\overline}
\newcommand{\ul}{\underline}
\newcommand{\ape}{\hat{\ }}
\newcommand{\LR}{\Leftrightarrow}
\newcommand{\RA}{\Rightarrow}
\newcommand{\LA}{\Leftarrow}
\newcommand{\rapf}{\n $\RA:$\ }
\newcommand{\lapf}{\n $\LA:$\ }
\newcommand{\sssl}{\ensuremath{|\sigma|}}
\newcommand{\weight}{\mbox{\rm \textsf{wt}}}
\newcommand{\real}{\mathbb{R}}
\newcommand{\upr}{\upharpoonright}
\begin{document}

\thanks{Miyabe was supported by JSPS. Nies was supported by the Marsden fund of New Zealand. Zhang was supported by a University of Auckland summer scholarship.}
 
\title[Using  theorems from analysis to study randomness]{Using almost-everywhere theorems \\  from analysis to study randomness}

\author{Kenshi Miyabe, Andr\'e Nies, and Jing Zhang}
\begin{abstract}  We study algorithmic randomness notions via effective versions of almost-everywhere theorems from analysis and ergodic theory. The effectivization is in terms of  objects described by a computably enumerable set, such as lower semicomputable functions. The corresponding randomness notions are slightly stronger than \ML\ (ML) randomness. 

We establish several equivalences. Given a  ML-random real $z$,  the additional randomness strengths  needed for  the following  are equivalent.

\n  (1) all effectively closed classes containing $z$ have density $1$ at $z$. 

\n   (2)  all nondecreasing functions with uniformly  left-c.e.\ increments are differentiable at $z$. 

\n  (3) $z$ is a Lebesgue point of each lower semicomputable integrable function. 

We also consider  convergence of left-c.e.\ martingales,  and convergence in the sense of Birkhoff's pointwise ergodic theorem. Lastly,  we study randomness notions related to  density of $\Pi^0_n$  and $\Sigma^1_1$ classes at a real.
\end{abstract}
\maketitle

\tableofcontents
\section{Introduction} Several theorems  in analysis  and ergodic theory   express that all functions in a certain class are well-behaved at almost every point. For instance,   Lebesgue published the following  theorem in 1904. It is often  covered in textbooks on analysis, e.g.\  \cite[Ch.\ 20]{Carothers:00}.
 
\begin{thm}[\cite{Lebesgue:1909}]  \label{thm:Leb} Let  $f \colon \, [0,1] \to \RR$ be a nondecreasing function. Then $f$  is differentiable almost-everywhere. \end{thm}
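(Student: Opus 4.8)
The plan is to follow the classical route via the four \emph{Dini derivates} together with the Vitali covering lemma. For $x\in[0,1]$ write
\[
D^+f(x)=\limsup_{h\to 0^+}\frac{f(x+h)-f(x)}{h},\qquad D_+f(x)=\liminf_{h\to 0^+}\frac{f(x+h)-f(x)}{h},
\]
and define $D^-f(x),D_-f(x)$ symmetrically using increments over $[x-h,x]$. Monotonicity gives $0\le D_+f\le D^+f$ and $0\le D_-f\le D^-f$ everywhere, so it suffices to establish: (a) the set where the four derivates are not all equal is Lebesgue null; and (b) the a.e.\ common value is finite almost everywhere. Together these say that $f'(x)=\lim_{h\to0}\bigl(f(x+h)-f(x)\bigr)/h$ exists and is finite for a.e.\ $x$.

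\textbf{Step (a).} I would first reduce to the single inequality $D^+f\le D_-f$ a.e.\ (proved identically for every monotone function on every interval): the same statement applied to the nondecreasing $g(t)=-f(-t)$ on $[-1,0]$ unwinds, via $D^+g(t)=D^-f(-t)$ and $D_-g(t)=D_+f(-t)$, to $D^-f\le D_+f$ a.e., and the trivial bounds then close the chain $D^-f\le D_+f\le D^+f\le D_-f\le D^-f$ into equalities a.e. Since $\{D_-f<D^+f\}$ is the countable union over rationals $\mu<\lambda$ of the sets $E_{\mu,\lambda}=\{x\in[0,1]:D_-f(x)<\mu<\lambda<D^+f(x)\}$, it is enough to show each $E_{\mu,\lambda}$ has outer measure $0$. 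Fix $\varepsilon>0$ and an open $U\supseteq E_{\mu,\lambda}$ with $|U|\le|E_{\mu,\lambda}|_*+\varepsilon$. The condition $D_-f(x)<\mu$ supplies arbitrarily short intervals $[x-h,x]\subseteq U$ with $f(x)-f(x-h)<\mu h$; these form a Vitali cover, so we extract finitely many disjoint ones $\{[x_i-h_i,x_i]\}_{i\le N}$ covering $E_{\mu,\lambda}$ up to outer measure $\varepsilon$, whence $\sum_i\bigl(f(x_i)-f(x_i-h_i)\bigr)<\mu\sum_i h_i\le\mu|U|$. Put $V=\bigcup_{i\le N}(x_i-h_i,x_i)$, so $|E_{\mu,\lambda}\cap V|_*\ge|E_{\mu,\lambda}|_*-\varepsilon$. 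Now $D^+f(y)>\lambda$ supplies, for each $y\in E_{\mu,\lambda}\cap V$, arbitrarily short $[y,y+k]\subseteq V$ with $f(y+k)-f(y)>\lambda k$; a second application of Vitali yields disjoint $\{[y_j,y_j+k_j]\}_{j\le M}$ inside $V$ with $\sum_j k_j\ge|E_{\mu,\lambda}|_*-2\varepsilon$. Each $[y_j,y_j+k_j]$ lies in exactly one $(x_i-h_i,x_i)$, and by monotonicity the increments of those lying in a fixed $(x_i-h_i,x_i)$ sum to at most $f(x_i)-f(x_i-h_i)$; hence
\[
\lambda\bigl(|E_{\mu,\lambda}|_*-2\varepsilon\bigr)<\sum_j\bigl(f(y_j+k_j)-f(y_j)\bigr)\le\sum_i\bigl(f(x_i)-f(x_i-h_i)\bigr)<\mu\bigl(|E_{\mu,\lambda}|_*+\varepsilon\bigr).
\]
Letting $\varepsilon\to0$ gives $\lambda|E_{\mu,\lambda}|_*\le\mu|E_{\mu,\lambda}|_*$, so $|E_{\mu,\lambda}|_*=0$ since $\lambda>\mu\ge0$.

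\textbf{Step (b).} Extend $f$ to $[0,2]$ by $f\equiv f(1)$ on $[1,2]$ and set $g_n(x)=n\bigl(f(x+1/n)-f(x)\bigr)\ge0$ on $[0,1]$. A substitution gives $\int_0^1 g_n=n\int_1^{1+1/n}f-n\int_0^{1/n}f=f(1)-n\int_0^{1/n}f\le f(1)-f(0)$, so Fatou's lemma yields $\int_0^1\liminf_n g_n\le f(1)-f(0)<\infty$. Since $\liminf_n g_n(x)\ge D_+f(x)$ pointwise, $D_+f$ is integrable, hence finite a.e.; together with Step (a) this makes the common derivate value finite a.e., proving the theorem.

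I expect the main obstacle to be the nested Vitali argument in (a): one must apply the covering lemma first to the left-derivate condition and then \emph{again}, restricted to the small intervals just produced, to the right-derivate condition, while carefully tracking how the several $\varepsilon$-losses accumulate so that the final comparison $\lambda|E_{\mu,\lambda}|_*\le\mu|E_{\mu,\lambda}|_*$ still survives the limit. By contrast the symmetry reduction to $D^+f\le D_-f$ and the Fatou estimate for finiteness are routine once this is in place.
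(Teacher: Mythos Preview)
Your argument is the standard textbook proof via Dini derivates and a nested Vitali covering, and it is correct as written (the symmetry reduction, the two-stage Vitali estimate yielding $\lambda|E_{\mu,\lambda}|_*\le\mu|E_{\mu,\lambda}|_*$, and the Fatou bound for finiteness are all fine).

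However, there is nothing to compare against: the paper does \emph{not} prove Theorem~\ref{thm:Leb}. It is quoted in the introduction as a classical result of Lebesgue, with a pointer to a textbook (Carothers, Ch.~20) for a proof; the paper's own work concerns effective versions of this and related theorems, where the functions are interval-c.e.\ or the martingales are left-c.e., and the exceptional null sets are matched to specific algorithmic randomness notions. So your proposal is not an alternative to anything in the paper---it simply supplies the omitted classical background, and what you have written is essentially the argument one finds in the cited reference.
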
  Another example of such a result is Birkhoff's ergodic theorem;  see e.g.\  \cite[Thm.\ 2.3]{Krengel:85} for a textbook reference.

\begin{theorem}[\cite{Birkhoff:39}]  \label{thm:Birkh} Let  $T$ be   a  measure preserving operator  on a probability space $X$.  Let $f$ be    an  integrable function on $X$. Then    for almost every point $z \in X$,   the average of $f(z), f(T(z)), \ldots, f(T^{n-1}(z))$    converges as $n \to \infty$.  If the operator is ergodic then this limit is the integral of $f$. \end{theorem}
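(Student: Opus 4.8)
The plan is to run the classical argument for the pointwise ergodic theorem via the \emph{maximal ergodic inequality} (in a paper this result would simply be cited, but this is how I would establish it). Write $S_n f = \sum_{k=0}^{n-1} f\circ T^k$ for the ergodic sums, with $S_0 f = 0$, and $A_n f = \frac 1n S_n f$ for the averages; let $\mu$ denote the measure, normalized so that $\mu(X)=1$. The target is that $A_n f$ converges $\mu$-almost everywhere. \emph{Step 1 (maximal ergodic inequality).} For $f\in L^1(X)$ put $M_N f = \max_{0\le n\le N} S_n f\ (\ge 0)$; the claim is $\int_{\{M_N f>0\}} f\,d\mu \ge 0$. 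The short proof: for $1\le n\le N$ one has $S_n f = f + (S_{n-1}f)\circ T \le f + (M_N f)\circ T$, since $S_{n-1}f\le M_N f$; on $E_N := \{M_N f>0\}$ the maximum defining $M_N f$ is attained at some index $\ge 1$ (because $S_0 f = 0$), so $M_N f \le f + (M_N f)\circ T$ there, i.e.\ $f \ge M_N f - (M_N f)\circ T$ on $E_N$. Integrating over $E_N$ and using that $M_N f$ is nonnegative and vanishes off $E_N$, together with $\int_X (M_N f)\circ T\,d\mu = \int_X M_N f\,d\mu$ (measure preservation), the right-hand side integrates to $\ge 0$, whence $\int_{E_N} f\,d\mu\ge 0$.

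\emph{Step 2 (a.e.\ convergence).} Let $\bar f^{*} = \limsup_n A_n f$ and $\bar f_{*} = \liminf_n A_n f$. From $(A_n f)\circ T = \tfrac{n+1}{n}A_{n+1}f - \tfrac 1n f$ one reads off that $\bar f^{*}$ and $\bar f_{*}$ are $T$-invariant a.e. Fix rationals $\alpha<\beta$ and set $E = \{\bar f_{*}<\alpha\}\cap\{\bar f^{*}>\beta\}$, a $T$-invariant set. Apply Step 1 to $g = (f-\beta)\mathbf 1_E$: since $\mathbf 1_E$ is $T$-invariant, $S_n g = \mathbf 1_E\, n(A_n f - \beta)$, and because $\bar f^{*}>\beta$ on $E$ the sets $\{M_N g>0\}$ increase to $E$; Step 1 together with dominated convergence gives $\int_E(f-\beta)\,d\mu\ge 0$, that is $\int_E f\,d\mu\ge\beta\,\mu(E)$. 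Applying Step 1 to $(\alpha-f)\mathbf 1_E$ in the same way gives $\int_E f\,d\mu\le\alpha\,\mu(E)$. Since $\alpha<\beta$, this forces $\mu(E) = 0$. Taking the union over all rational pairs $\alpha<\beta$ shows $\bar f_{*} = \bar f^{*}$ a.e., so $\bar f := \lim_n A_n f$ exists in $[-\infty,\infty]$ a.e.; Fatou's lemma applied to $|A_n f|$, whose integrals are $\le\|f\|_1$, shows $\bar f\in L^1$, hence finite a.e.

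\emph{Step 3 (ergodic case) and the main obstacle.} The limit $\bar f$ is $T$-invariant, so ergodicity forces $\bar f = c$ a.e.\ for a constant $c$. To identify $c = \int_X f\,d\mu$, first take $f\in L^\infty$: then $|A_n f|\le\|f\|_\infty$, so dominated convergence gives $\int_X\bar f\,d\mu = \lim_n\int_X A_n f\,d\mu = \int_X f\,d\mu$, using $\int_X A_n f\,d\mu = \int_X f\,d\mu$. For general $f\in L^1$, approximate it in $L^1$ by a bounded function; since $f\mapsto\bar f$ and each $A_n$ are contractions on $L^1$ (the latter from Step 1, the former from Fatou), the identity $\int_X\bar f\,d\mu = \int_X f\,d\mu$ passes to the limit, so $c = \int_X f\,d\mu$. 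I expect the only genuinely nontrivial point to be Step 1, the maximal ergodic inequality; everything after it is soft measure theory. The one place in Step 2 needing care is arranging that $E$ is \emph{honestly} $T$-invariant (modify it on a null set if necessary) so that Step 1 applies to functions supported on it, together with the harmless observation that if $\mu(E)=0$ there is nothing to prove.
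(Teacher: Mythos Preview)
The paper does not provide its own proof of this statement: Theorem~\ref{thm:Birkh} is stated in the introduction as classical background, attributed to Birkhoff with a textbook reference to Krengel, and is never revisited for a proof. So there is nothing to compare against.

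Your argument is correct and is the standard textbook route via the (Garsia form of the) maximal ergodic inequality; incidentally, the paper itself invokes exactly that inequality later (Section~6, citing Krengel) when proving its effective version of Birkhoff's theorem for Oberwolfach random points. One cosmetic remark: in Step~3 you attribute the $L^1$-contraction property of $A_n$ to Step~1, but this is immediate from the triangle inequality and measure preservation and does not need the maximal inequality. Otherwise the steps are clean, including your flagged point about arranging genuine $T$-invariance of $E$ by passing to an invariant modification.
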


The  theorems  involve   a null set of exceptions which usually  depends on  given objects, such as   $T$ and $f$ in Theorem~\ref{thm:Birkh}. By an effective version of such a theorem, we mean the following. If the given   objects are  algorithmic  in some sense, then   the resulting   null set is  also algorithmic.  (A~slightly stronger effective version of such a  theorem   would also ask that the null set be obtained uniformly from a presentation of the given objects, without the assumption that it  is algorithmic; this is  usually  the case for the examples we   consider.)

  Brattka, Miller and Nies \cite{Brattka.Miller.ea:16}, in their Thm. 4.1 combined with Remark 4.7,  show the following effective version of   Lebesgue's theorem.  The given object is a computable function. 
\begin{theorem}[\cite{Brattka.Miller.ea:16}]\label{thm: comp dyadic diff}
	Suppose a nondecreasing  function $f \colon \, [0,1] \to \RR$ is computable.  There  exists  a computable martingale that succeeds on the binary presentation of each real $z$ such that $f'(z)$ fails to exist.   
\end{theorem}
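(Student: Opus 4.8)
The plan is to build the martingale from the increments of $f$ on dyadic intervals. For a binary string $\sigma$ write $I_\sigma = [\,0.\sigma,\ 0.\sigma + 2^{-|\sigma|}\,]$, and set
\[
 M_0(\sigma) \;=\; 2^{|\sigma|}\bigl(f(0.\sigma + 2^{-|\sigma|}) - f(0.\sigma)\bigr),
\]
the average slope of $f$ over $I_\sigma$. Since $f$ is nondecreasing, $M_0\ge 0$; since $I_\sigma$ is split by its midpoint into $I_{\sigma 0}$ and $I_{\sigma 1}$ one gets $M_0(\sigma 0)+M_0(\sigma 1)=2M_0(\sigma)$; and $M_0(\langle\rangle)=f(1)-f(0)<\infty$. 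So $M_0$ is a nonnegative martingale, computable because $f$ is (with the routine caveat that, if one wants rational values, one passes to a computable martingale with at least its success set by rational approximation; the computable modulus of uniform continuity $\omega_f$ of $f$, which gives $M_0(\sigma)\cdot 2^{-|\sigma|}\le\omega_f(2^{-|\sigma|})$, makes all such approximations effective). Dyadic rationals form a computable set, so there is a fixed computable martingale succeeding on both presentations of each of them, which I absorb at the end; henceforth $z$ is irrational.

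The first observation is that $M_0$ already does most of the work. If $f'(z)$ exists and is finite then, since $I_n(z)\ni z$ and $|I_n(z)|=2^{-n}\to 0$, the slopes $M_0(z\upr n)=\mathrm{Sl}_f(I_n(z))$ converge to $f'(z)$; contrapositively, if $M_0(z\upr n)$ fails to converge to a finite limit then $f'(z)$ does not exist. Two sub-cases of ``$f'(z)$ undefined'' are thereby handled at once: if $\limsup_n M_0(z\upr n)=\infty$ then $M_0$ itself succeeds on $z$; and if $M_0(z\upr n)$ is bounded but oscillates, then for suitable rationals $p<q$ it crosses from below $p$ to above $q$ infinitely often along $z$, so the Doob upcrossing strategy for $M_0$ between $p$ and $q$ — a computable martingale transform of $M_0$ — succeeds on $z$. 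A weighted (Solovay) sum of $M_0$ with all these upcrossing martingales over rational pairs $p<q$ is one computable martingale succeeding on every $z$ for which $M_0(z\upr n)$ does not converge to a finite limit.

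The real content is the remaining case: $M_0(z\upr n)$ converges to some finite $L$, yet $f'(z)\neq L$ because $\mathrm{Sl}_f([a,b])$ misbehaves over intervals $[a,b]\ni z$ not aligned with the dyadic grid. Here I would first reduce, using monotonicity and rounding the endpoints of witnessing intervals to the dyadics, to witnessing intervals that are dyadic and one-sided, say $[a_k,z]$ with $a_k\uparrow z$ and $\mathrm{Sl}_f([a_k,z])\to\infty$ (if instead these slopes stay bounded and oscillate, the upcrossing device above, now applied to them, catches $z$); each such interval meets the dyadic branch of $z$ at some point $l_n(z)$, so it splits into ``mass just outside $l_n(z)$'' plus ``mass inside $I_n(z)$ left of $z$'', and as the second part is dominated by $M_0(z\upr n)\to L$, the blow-up must come from the first. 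Thus for infinitely many $n$ the real $z$ comes within some $2^{-p_n}$ of $l_n(z)$ (equivalently, digits $n{+}1,\dots,p_n$ of $z$ are all $0$) while the increment $\mu\bigl([\,l_n(z)-2^{-p_n},\,l_n(z)\,]\bigr)=f(l_n(z))-f(l_n(z)-2^{-p_n})$, which is computable from $f$ and bounded by $\omega_f(2^{-p_n})$, is $\gg 2^{-p_n}$ (and symmetrically with $r_n(z)$ and runs of $1$'s). To exploit this, for each pair $n<p$ and each level-$n$ string $\sigma$ I would use the martingale that, upon reaching $\sigma$, bets all its current capital on the next $p-n$ digits being $0$, multiplying its capital by $2^{p-n}$ exactly when $z$ extends $\sigma 0^{p-n}$ — plus the mirror version with $1$'s — and then take a Solovay sum of all of these, attaching to the pair $(\sigma,p)$ a weight chosen from the precomputed increment $f(0.\sigma)-f(0.\sigma-2^{-p})$ so that the sum is a legitimate computable martingale whose value, along a $z$ of the kind just described, exceeds every bound infinitely often. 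Adding this to the martingale of the previous paragraph gives the required computable martingale.

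The step I expect to be the main obstacle is precisely this last reduction: converting ``$f'(z)$ fails although the aligned dyadic slopes $M_0(z\upr n)$ converge'' into a clean combinatorial statement about the binary expansion of $z$ relative to where $f$ concentrates its mass, and then choosing the weights — via the computable modulus of continuity $\omega_f$ — so that the aggregate of the ``trapping'' martingales is simultaneously (i) a genuine martingale with finite initial capital, (ii) computable, and (iii) unbounded along every such $z$. That some extra device beyond $M_0$ is unavoidable can be seen directly: one can build a computable nondecreasing $f$ whose mass sits in thin blobs near the right endpoint of every dyadic interval, with the effect that $M_0(z\upr n)\to 0$ along uncountably many irrational $z$ at which $f$ nevertheless has infinite left upper derivative, so $M_0$ alone witnesses nothing at those points.
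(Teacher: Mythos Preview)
Your first two paragraphs are correct and standard: the slope martingale $M_0(\sigma)=S_f([\sigma])$ is a computable martingale, and if it diverges along $z$ then either it is unbounded or a computable upcrossing transform of it succeeds on~$z$.

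The genuine gap is in your treatment of the residual case where $M_0(z\uhr n)\to L<\infty$ yet $f'(z)$ fails to exist. Two of your moves there do not work. First, you write that if the one-sided slopes $S_f([a_k,z])$ ``stay bounded and oscillate, the upcrossing device above, now applied to them, catches $z$''. But those slopes are not the values of any martingale along the binary expansion of $z$; the Doob upcrossing transform needs a martingale to act on, so there is nothing to apply it to. Second, your reduction to the case $S_f([a_k,z])\to\infty$ is unjustified: when $M_0$ converges to $L$ the failure of $f'(z)$ may manifest simply as $\utilde Df(z)<L$ or $\widetilde Df(z)>L$ with all slopes remaining bounded. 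Your final ``trapping'' construction inherits both problems, and you yourself flag it as the obstacle; as stated it is not a proof.

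The paper's route out of this case is different and avoids the ad hoc analysis of digit runs. One introduces a second computable martingale $\hat M_0(\sigma)=S_{\hat f}([\sigma])$ where $\hat f(x)=f(x+1/3)$; it tracks slopes on dyadic intervals shifted by $1/3$. The elementary geometric fact (Lemma~\ref{fact:geom}) that no endpoint of a standard dyadic interval of length $2^{-m}$ lies within $2^{-m}/3$ of an endpoint of a shifted one means that every short interval containing $z$ sits well inside an interval of comparable length from one of the two grids. One then argues via porosity (Proposition~\ref{pro:interval c.e.} and Subsection~\ref{ss:lower derivative}): if $\widetilde Df(z)\neq\widetilde D_2f(z)$ or $\utilde Df(z)\neq\utilde D_2f(z)$, a \emph{computable} closed set---defined from the computable martingale $M_0$ or $\hat M_0$ by a threshold condition on slopes---is porous at $z$ (or at $z-1/3$), and from that porosity one extracts a computable martingale succeeding there. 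Combining this with convergence of $M_0$ and $\hat M_0$ yields $f'(z)$ whenever $z$ is computably random. The missing idea in your attempt is precisely this second, shifted grid (equivalently, the porosity reformulation); without it there is no martingale to which the upcrossing machinery can be applied in the residual case.
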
 
We explain the terms used in this theorem.

\vsp

\n  (a)  The  computability of a function is taken in the usual sense of computable analysis \cite{Weihrauch:00}. As shown in the last section of the  longer arXiv version of~\cite{Brattka.Miller.ea:16}, the weaker hypothesis is sufficient that $f(q)$ be a computable real uniformly in a rational~$q$. 

\n  (b) In randomness theory, a  \emph{martingale} is a function $M \colon \strcantor \to \RR^+_0$ such that  $2 M(\sss)= M(\sss0) + M(\sss 1)$. A martingale $M$ succeeds on a bit sequence~$Z$ if the value of $M$ on initial segments of $Z$ is unbounded.  The success set is a null set which is effective in case $M$ is computable.  

\vsp

A  real on which no computable martingale succeeds is called computably random, a notion introduced by Schnorr~\cite{Schnorr:75}; for a recent  reference see  \ \cite[Ch.\ 7]{Nies:book} or \cite{Downey.Hirschfeldt:book}.  The theorem above shows that $f'(z)$ exists for each computably random real $z$ and each nondecreasing computable function $f$. Brattka et al.\ also show that conversely, if a real $z$ is not computably random, then some computable monotonic function $f$  fails to be differentiable at~$z$.   
In this way,  this effective form of Lebesgue's Theorem~\ref{thm:Leb}  is matched to computable randomness. 
This is  an instance of a more general principle: effective versions of ``almost-everywhere''   theorems often  correspond to well-studied algorithmic randomness notions.

	Pathak, Rojas and Simpson~\cite[Theorem 3.15]{Pathak.Rojas.ea:12} matched a particular effective form of the Lebesgue differentiation theorem to Schnorr randomness (the  direction where a  function is turned into a  test was independently proven in   \cite[Thm.\ 5.1]{Freer.Kjos.ea:14}). We will discuss this in more detail in 	Subsection~\ref{ss:Leb-L1-comp}.

V'yugin~\cite{Vyugin:98}, G\'acs et al.\ \cite{Gacs.Hoyrup:11}, Bienvenu et al.\ \cite{Bienvenu.Day.etal:12}, Franklin et al.\ \cite{Franklin.Greenberg.etal:12}, and 	Franklin and Towsner \cite{Franklin.Towsner:14} all studied  effective versions of   Birkhoff's theorem. For instance, in the notation above, if  an ergodic operator $T $ is computable, and the integrable function $f$ is lower semicomputable as defined below, then the corresponding notion is \ML\ randomness by~\cite{Bienvenu.Day.etal:12,Franklin.Greenberg.etal:12}. 
	
	Matching such theorems to  algorithmic randomness notions has been useful in two ways: 
\bi \item[(a)]  to determine  the   strength of the theorem, and
 \item[(b)]  to   understand   the randomness notion. \ei 
  For an example of (a), Demuth~\cite{Demuth:75} (see \cite{Brattka.Miller.ea:16} for a proof in modern language)  showed that   Jordan's  extension of Lebesgue's result to functions of bounded variation corresponds to \ML\ randomness. This notion   is stronger than computable randomness; so in a sense this extension is   harder to obtain.  For an example of (b), Brattka et al.\ \cite{Brattka.Miller.ea:16} used their results to show that computable randomness of a real does not depend on the choice of base in its digit expansion, even though martingales (which can also be defined with respect to bases other than 2) bet on such an expansion.

The main purpose of this paper is to examine effective versions of almost-everywhere theorems that do not correspond to  known randomness notions.    This apparently occurred for the first time when Bienvenu et al.\  showed in \cite[Cor.\ 5.10]{Bienvenu.Hoelzl.ea:12a}     that the randomness notion corresponding to the Denjoy-Young-Saks theorem implies computable randomness, but is incomparable with \ML\ randomness.  

 We   base our study on Lebesgue's theorems mentioned earlier, and on  the following two results. The first,   Lebesgue's density theorem~\cite{Lebesgue:1909},  asserts that for almost every point $z$   in a measurable class $\+C \sub [0,1]$, the class is ``thick'' around $z$ in the sense that the relative measure of $\+ C$ converges to $1$ as one ``zooms in'' on $z$.     The second,   Doob's   martingale convergence theorem  \cite{Durrett:96},   says that a martingale converges on almost every point.  

The main given  object  will   only be effective in the  weak  sense of computable enumerations. We   consider the Lebesgue density theorem for  effectively closed sets of reals (the complement is an open set that can be computably enumerated as a union of rational open intervals). We consider  Doob's convergence  theorem  for martingales that uniformly assign  left-c.e.\ reals to  strings.  
  
  A group or researchers working at the University of Wisconsin at  Madison,  consisting of Andrews, Cai, Diamondstone, Lempp, and  Miller,    showed in 2012
   that for a real $z$ the following two conditions are equivalent, thereby connecting the two theorems.

\vsp 

\n (1) $z$ is \ML\ random and every effectively closed  class containing $z$ has density 1 at $z$

\vsp

\n (2)  every left-c.e.\ martingale converges along the binary expansion of $z$. 

\vsp 

 In this paper we provide  two further  conditions on a real $z$ that are  equivalent to the ones above. They are  are also linked to  well-known classical results of the   ``almost-everywhere'' type where the  main given  object is  in some sense computably enumerable.  The conditions are:

\vsp 

\n (3) every interval-c.e.\ function $f$ is differentiable at $z$

\vsp

\n (4)   $z$ is \ML\ random  and a Lebesgue point of each  integrable  lower semicomputable function $g \colon [0,1] \to  {\RR} \cup \{\infty\}$.

\vsp
 
By default, functions will have domain $[0,1]$. In (3), the relevant classical result is  Lebesgue's  theorem on monotonic functions discussed above.   To say that a monotonic function~$f$ is interval-c.e.\ means that $f(0)=0 $ and $f(q) -f(p)$ is left-c.e.\ uniformly in rationals $p<q$. In (4), the classical result is  Lebesgue's differentiation theorem, which extends the density theorem.   A function  $g$ is lower semicomputable if $\{x\colon g(x) > q \}$ is $\SI 1$ uniformly in a rational $q$.  

The new randomness notion identifying the strength of each of the conditions (1)--(4)   will be called \emph{density randomness}.

The analytic notion of density   has already been  very useful for resolving  open problems on the complexity of sets of numbers,  asked for instance in~\cite{Miller.Nies:06}. It was applied in~\cite{Day.Miller:14} to show that $K$-triviality coincides with ML-noncuppability. It was further used to solve the so-called covering problem that every $K$-trivial is Turing below an incomplete ML-random oracle, and in fact below a single such oracle that also is~$\DII$. See the survey \cite{Bienvenu.Day.ea:14}   for more detail and references.

Sections 2-6 of the paper are based   on  the  almost-everywhere theorems that serve as an analytic background for our algorithmic investigations: Lebesgue density theorem, Doob martingale convergence, differentiability of monotonic functions~\cite{Lebesgue:1909},   Lebesgue differentiation theorem~\cite{Lebesgue:1910}, and Birk\-hoff's theorem~\cite{Birkhoff:39}.    
In a final section we will   study density  for    classes that have descriptional  complexity  higher  than~$\PPI$.   

This work is a mix of survey and research paper. Section~\ref{s:LebD} introduces the notion of density of a class at a point in detail, and contains basic  results on effective aspects of density, some of them  new. Section~\ref{s:MG convergence} contains a proof of the  unpublished 2012 result of the Madison group (with permission). Section~\ref{s:STACS14} elaborates on a conference paper of Nies \cite{Nies:14}.  The remainder  of the paper consists of new results.
\section{Lebesgue density theorem} \label{s:LebD}
\newcommand{\cdf}{{\sf cdf}}
This section   presents background material and some initial  results. We discuss the theorem that leads to the definition of two central  notions for this paper, density-one points and density randomness.  We also look at these notions  in the setting of Cantor space. M.\ Khan and J.\  S.\ Miller (see \cite{Khan:15}) have shown that among the ML-random reals, this choice  of a  setting does not make a difference.  We show that lowness for density randomness is the same as lowness for ML-randomness, or equivalently, $K$-triviality.

\subsection{Density in the setting of reals} 
%
%
The definitions  below   follow \cite{Bienvenu.Hoelzl.ea:12a}. Let~$\lambda$ denote Lebesgue measure. 
\begin{definition}
We define the lower Lebesgue density of a set $\sC \subseteq \R$ at a point~$z$ to be the quantity $$\ul \varrho(\sC | z):=\liminf_{\gamma,\delta \rightarrow 0^+} \frac{\lambda([z-\gamma,z+\delta] \cap \sC)}{\gamma + \delta}.$$
\end{definition} 
%
 Note that  $0 \le \ul \varrho(\sC | z) \le 1$.
  \begin{theorem}[Lebesgue \cite{Lebesgue:1909}] 
Let $\sC \subseteq \R$ be a measurable set. Then  $\ul \varrho(\sC | z)=1$ for almost every $z\in \sC$.
\end{theorem}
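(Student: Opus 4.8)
The plan is to prove this by the classical Vitali covering argument. Since density at a point is a local property, I would first reduce to the case that $\sC$ is bounded: for $z$ in the interior of some $(-n,n)$ the quantities $\ul\varrho(\sC\mid z)$ and $\ul\varrho(\sC\cap(-n,n)\mid z)$ agree (the defining ratios coincide once $\gamma,\delta$ are small enough that $[z-\gamma,z+\delta]\sub(-n,n)$), and the countably many endpoints are negligible; so it suffices to prove the conclusion for each $\sC\cap(-n,n)$, and I may as well assume $\lambda(\sC)<\infty$. Fix a rational $\alpha\in(0,1)$ and set $A_\alpha:=\{z\in\sC:\ul\varrho(\sC\mid z)<\alpha\}$. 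The heart of the matter is to show $\lambda^*(A_\alpha)=0$, where $\lambda^*$ denotes outer measure; working with $\lambda^*$ lets me avoid discussing whether $A_\alpha$ itself is measurable (which falls out a posteriori once we know it is null).

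To carry this out, fix $\varepsilon>0$ and use outer regularity to choose a bounded open $U\supseteq A_\alpha$ with $\lambda(U)\le\lambda^*(A_\alpha)+\varepsilon$. For each $z\in A_\alpha$, since $\ul\varrho(\sC\mid z)<\alpha$ there are intervals $I=[z-\gamma,z+\delta]$ of arbitrarily small length with $\lambda(\sC\cap I)<\alpha\,\lambda(I)$, and all sufficiently short ones lie inside $U$ because $z\in U$. Hence the family $\mathcal V$ of all such intervals (a Vitali cover of $A_\alpha$: it contains, through each $z\in A_\alpha$, intervals of arbitrarily small length) admits, by the Vitali covering lemma, a countable pairwise disjoint subfamily $\{I_j\}_j$ with $\lambda^*\!\bigl(A_\alpha\setminus\bigcup_j I_j\bigr)=0$. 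Then, using countable subadditivity of $\lambda^*$, measurability of each $\sC\cap I_j$, the choice of the $I_j$, and disjointness,
\[
\lambda^*(A_\alpha)\ \le\ \sum_j\lambda(\sC\cap I_j)\ <\ \alpha\sum_j\lambda(I_j)\ =\ \alpha\,\lambda\Bigl(\bigcup_j I_j\Bigr)\ \le\ \alpha\,\lambda(U)\ \le\ \alpha\bigl(\lambda^*(A_\alpha)+\varepsilon\bigr).
\]
Letting $\varepsilon\to0$ gives $\lambda^*(A_\alpha)\le\alpha\,\lambda^*(A_\alpha)$, and since $\lambda^*(A_\alpha)\le\lambda(\sC)<\infty$ and $\alpha<1$ this forces $\lambda^*(A_\alpha)=0$. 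Finally $\{z\in\sC:\ul\varrho(\sC\mid z)<1\}=\bigcup_{\alpha\in\QQ\cap(0,1)}A_\alpha$ is a null set, and since $\ul\varrho(\sC\mid z)\le1$ always, $\ul\varrho(\sC\mid z)=1$ for almost every $z\in\sC$.

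I expect the only genuine obstacle to be the Vitali covering lemma, which carries the combinatorial weight of the argument. It is worth noting that the cheaper $5r$-type covering lemma is not enough here: it would only yield an inequality of the form $\lambda^*(A_\alpha)\le 5\alpha\,\lambda^*(A_\alpha)$, hence density $\ge 1/5$ rather than $1$ — so the full strength of Vitali (a disjoint subfamily covering $A_\alpha$ up to a null set) is really needed. The remaining points requiring care are routine bookkeeping: keeping everything bounded so that $\lambda^*(A_\alpha)$ is finite and can be cancelled from both sides, and using $\lambda^*$ throughout to sidestep the measurability of $A_\alpha$.
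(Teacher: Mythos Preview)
Your argument is correct and is the standard Vitali covering proof of the Lebesgue density theorem. Note, however, that the paper does not itself prove this statement: it is quoted as a classical background result and attributed to Lebesgue~\cite{Lebesgue:1909}, with no proof given. So there is nothing in the paper to compare your approach against; what you have written is the textbook argument (e.g.\ as in Carothers or Bogachev, both cited in the paper).

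One cosmetic remark: in the displayed chain of inequalities, the strict inequality $\sum_j\lambda(\sC\cap I_j)<\alpha\sum_j\lambda(I_j)$ need not survive an infinite sum in general, but this is harmless here --- either replace $<$ by $\le$ (the conclusion $(1-\alpha)\lambda^*(A_\alpha)\le 0$ follows just as well), or observe that if $\lambda^*(A_\alpha)>0$ then the Vitali subfamily is nonempty and each term contributes a strictly positive gap. Either way the argument goes through.
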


When $\sC$ is open, then the lower Lebesgue density is clearly $1$.
Thus, the simplest non-trivial case is when $\sC$ is closed. We use this case to motivate our central definition.
\begin{definition}  \label{density random}
  We say that a real  $z\in[0,1]$  is a \emph{density-one point}   if $\ul \varrho(\sC | z)=1$ for every effectively closed class $\sC$ containing $z$.   We say that $z$ is \emph{density random} if $z$ is a  density-one point and  \ML\ random.
\end{definition}
As noted e.g.\ in \cite{Bienvenu.Hoelzl.ea:12a}, being a density-one point by itself is not a   reasonable randomness notion:   for instance, every  1-generic real is   a density-one point, but fails the law of large numbers.

By the Lebesgue density theorem and the fact that there are only countably many effectively closed classes, almost every real $z$ is density random.  
Recall that a real is  weakly-2-random  if it does not lie in any $\PI 2$ null class. In fact, any such real is density random: for any effectively closed $\+ C$ and  rational $q< 1$, the null class $\{ z\in \+ C \colon \, \ul \varrho (\+ C \mid z)  \leq  q \}$ is $\PI 2$.

  We say that $z$ is a \emph{positive density point} if $\ul \varrho(\sC | z)>0$ for every effectively closed class $\sC$ containing $z$. The difference between positive and full density is typical for our algorithmic setting. In classical analysis, null sets are usually negligible, so everything is settled by Lebesgue's theorem. In effective analysis, a result of      Day and Miller~\cite{Day.Miller:15} separates the two cases:  for a ML-random   real $z$,  to be  a  full density-one point is a stronger  randomness condition  than to be a  positive density point.

  Bienvenu et al.\ \cite{Bienvenu.Hoelzl.ea:12a} have shown  that a ML-random  real $z$ is a positive density point   if and only if $z$   is Turing incomplete. In contrast, for density-one points, no characterisation in terms of computational complexity  among the ML-random reals   is   known at present.

\subsection{Density in the setting of Cantor space}
We let $\cantor$ denote the usual product probability space of infinite bit sequences. For $ Z \in \cantor $ we let $Z \uhr n$ (or $Z\upr n$ in subscripts) denote the first $n$ bits of~$Z$. Variables $\sss, \tau,  \eta$ range over strings in $\strcantor$.  We denote by  $\sss \preceq \tau$ that $\sss$ is an initial segment of $\tau$;   $\sss \prec \tau $ denotes  that $\sss$ is a  proper initial segment of $\tau$; $\sss \prec Z $ that $\sss$ is an initial segment of the infinite bit sequence~$Z$.  

 For  each  $\sss $ we let $[\sss]$ denote the clopen set of extensions of $\sss$.  For $\sC \sub \cantor$ we let $\leb_\sss(\sC) = \tp \sssl  \leb(\sC \cap [\sss])$ denote  the local measure of $\sC$ inside~$[\sss]$.

Consider a  measurable set $\+ C\subseteq \cantor$ and $Z\in \cantor$. The \emph{lower density} of $Z\in \cantor$ in $\+ C$ is defined to be \[\ul \varrho_2(\+ C|Z)=\liminf_{n \to \infty} \leb_{Z \upr n} (\+ C) \]  
We say that a real $z \in [0,1]$ is a \emph{dyadic density-one} point if its dyadic expansion is a density one point in Cantor space. We will use  the following result.
\begin{theorem}[Khan and  Miller  \cite{Khan:15}]\label{th:ML-dyadic-full}
Let $z$ be a ML-random dyadic density-one point.
Then $z$ is a full density-one point.
\end{theorem}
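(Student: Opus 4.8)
The plan is to prove the contrapositive: assuming $z$ is ML-random but \emph{not} a full density-one point, we construct an effectively closed $\+ D\subseteq\cantor$ with $Z\in\+ D$ and $\ul\varrho_2(\+ D\mid Z)<1$, where $Z=b_1b_2\dots$ is the dyadic expansion of $z$. First reduce to a one-sided failure: since the density over $[z-\gamma,z+\delta]$ is a $(\gamma,\delta)$-weighted average of the one-sided lower densities $\ul\varrho^{-}$ over $[z-\gamma,z]$ and $\ul\varrho^{+}$ over $[z,z+\delta]$, we have $\ul\varrho(\+ C\mid z)=\min\!\big(\ul\varrho^{-}(\+ C\mid z),\ul\varrho^{+}(\+ C\mid z)\big)$, so the failure of full density at $z$ yields an effectively closed $\+ C\ni z$, a rational $q<1$, and reals $\delta_i\downarrow 0$ with $\lambda(\+ C\cap[z,z+\delta_i])\le q\delta_i$ (the symmetric left-hand case is identical). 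Put $U:=[0,1]\setminus\+ C$ and $p:=1-q$, so $\lambda(U\cap[z,z+\delta_i])\ge p\delta_i$ for all $i$.

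Next we place the bad intervals in the dyadic tree. Let $[Z\uhr{m_i}]$ be the shortest dyadic interval containing $[z,z+\delta_i]$, with midpoint $c_i$. Since $[z,z+\delta_i]\not\subseteq[Z\uhr{m_i+1}]$ while $z<c_i$, one sees that $b_{m_i+1}=0$ and that $[z,z+\delta_i]$ straddles $c_i$; writing $s_i$ for the length of the run of $1$'s of $Z$ just after position $m_i+1$, a short computation gives $c_i-z\asymp 2^{-(m_i+s_i)}$, hence (as $\delta_i>c_i-z$) $2^{-(m_i+s_i)}\lesssim\delta_i\le 2^{-m_i}$, and in particular $m_i\to\infty$. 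The decisive use of ML-randomness is a bound on $s_i$: since $Z\uhr{m_i+1+s_i}$ is determined by $Z\uhr{m_i+1}$ and $s_i$, the prefix-free complexity satisfies $K(Z\uhr{m_i+1+s_i})\le K(Z\uhr{m_i+1})+2\log s_i+O(1)\le (m_i+1)+2\log(m_i+1)+2\log s_i+O(1)$, while the Levin--Schnorr inequality $K(Z\uhr n)\ge n-O(1)$ forces $K(Z\uhr{m_i+1+s_i})\ge m_i+1+s_i-O(1)$; hence $s_i=O(\log m_i)$. The naive conclusion --- that the relative measure of $\+ C$ in $[Z\uhr{m_i}]$ is at most $1-\Omega(p\,2^{-s_i})=1-\Omega\!\big(p/\mathrm{poly}(m_i)\big)$ --- still tends to $1$. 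Overcoming this dilution by the run is the heart of the proof.

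To amplify, one builds a single effectively closed $\+ D=\cantor\setminus V$, where $V$ is a $\SI1$ set that records, on each dyadic interval $[\sigma]$ and at each scale $j\ge 0$, a magnified copy of the behaviour of $U$ in the thin sub-interval of $[\sigma]$ adjacent to the midpoint $c_\sigma$: concretely, $V$ superposes, over all $\sigma$ and all $j$, the image of $U\cap[\sigma10^j]$ under the dilation $[\sigma10^j]\to[\sigma1]$ (together with its mirror image across $c_\sigma$ into $[\sigma0]$), with a thin clopen ``firewall'' around $Z$ deleted. For the relevant index $\sigma=Z\uhr{m_i}$ the previous paragraph shows that $z$ lies just below $c_i$ and that $z\notin[\sigma1]$; letting $\delta_i'$ be the length of the right part $[c_i,z+\delta_i]$ of the bad interval and picking $j_i$ with $2^{-(m_i+1+j_i)}\in[\delta_i',2\delta_i')$, the interval $[\sigma10^{j_i}]$ contains $[c_i,c_i+\delta_i']$, which --- outside the subcase $\delta_i'\le c_i-z$ (where instead $\delta_i\asymp 2^{-(m_i+s_i)}$) --- carries $U$-mass $\Omega(p\delta_i')$; the dilated copy then fills a fraction $\Omega(p)$ of $[\sigma1]$, so the relative measure of $\+ D$ inside $[Z\uhr{m_i}]$ drops below $1-\Omega(p)$, with no threat to $z\in\+ D$ because $z\notin[\sigma1]$. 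The exceptional subcase, and the ``mass on the $[\sigma0]$ side'' case, are handled the same way after reflecting across $c_i$ into a dyadic interval of level $m_i+O(s_i)$ on $Z$'s path. Since $m_i\to\infty$, this gives $\ul\varrho_2(\+ D\mid Z)\le 1-\Omega(p)<1$. The firewall is the delicate ingredient: it must remove a neighbourhood of $z$ from every copy placed in a dyadic interval $[\sigma]$ on $Z$'s path --- where, by the run bound of the previous paragraph, $z$ sits only $O(\log|\sigma|)$ deep --- and must do so \emph{uniformly}, without knowing the run lengths, while sparing the copies that carry the amplified mass.

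The tree bookkeeping of the second paragraph and the auxiliary subcases are routine; the genuine obstacle is the amplification, and within it the design of the firewall, which must keep $z$ out of $V$ --- seemingly requiring the unknown $s_i$ --- yet preserve enough of the dilated copies of $U$ to beat the $2^{s_i}$ dilution. This is precisely where ML-randomness is invoked a second time, through the uniform bound $s_i=O(\log m_i)$.
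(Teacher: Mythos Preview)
The paper does not prove this theorem; it is quoted from \cite{Khan:15} and used without proof. The only hint the paper gives about the method is in Section~4.2, where Proposition~\ref{pro:interval c.e.} (a porosity argument: cover a bad interval by dyadic pieces at a slightly finer scale and average to locate a dyadic hole near $z$) is said to ``extend the idea in the proof of Theorem~\ref{th:ML-dyadic-full}.''

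Your proposal is an outline, not a proof, and the step you yourself call ``the genuine obstacle'' is not carried out. You build $V$ as the union over \emph{all} $\sigma,j$ of dilated copies $\phi_{\sigma,j}(U\cap[\sigma10^j])\subseteq[\sigma1]$ together with their mirrors in $[\sigma0]$, and then propose to delete ``a thin clopen firewall around $Z$.'' But for any $\sigma\prec Z$ with $b_{|\sigma|+1}=1$ and any $j\ge1$, the preimage $\phi_{\sigma,j}^{-1}(z)\in[\sigma10^j]$ is a point bearing no relation to $z$, and nothing prevents it from lying in $U$; the mirror case is symmetric. So without the firewall $z\in V$ is not excluded, and $\+ D$ need not contain $z$.

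Your justification for the firewall --- that ``$z$ sits only $O(\log|\sigma|)$ deep'' in each $[\sigma]$ on $Z$'s path --- misapplies the run-length bound. That bound (equivalently, $K(Z\uhr n)\ge n-O(1)$) tells you only that $z$ stays at distance at least $2^{-|\sigma|}/\operatorname{poly}(|\sigma|)$ from the \emph{endpoints} of $[\sigma]$; it does not confine $z$ to any thin, uniformly computable sub-region of $[\sigma]$. The specific estimate $s_i=O(\log m_i)$ from your second paragraph applies only to the particular prefixes $Z\uhr{m_i}$ manufactured by the bad intervals, where straddling the midpoint forces a run of $1$'s after position $m_i{+}1$; it says nothing about a generic $\sigma\prec Z$, yet the dilated copies for every $(\sigma,j)$ land in cylinders along $Z$'s entire path. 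Until you actually construct the firewall --- a $\Sigma^0_1$ set defined without reference to $Z$, that provably misses $z$ and yet leaves an $\Omega(p)$ fraction of the amplified mass in the cylinders $[Z\uhr{m_i}]$ --- the argument is incomplete at exactly its crux.
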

Thus,  by the usual  identification of  irrational real numbers in $[0,1]$ with elements in Cantor space,  we can   equivalently  define density randomness for a real as in Definition~\ref{density random}, or    for the corresponding bit sequence in Cantor space using lower dyadic density.

\subsection{Lowness for Density randomness}
We say that a Turing oracle $A$ is \textit{low for density randomness} if whenever $Z \in \cantor $ is density random, $Z$ is already density random relative to $A$.
Here, $z$ is density random relative to $A$ if $z$ is ML-random relative to $A$, and 
  $\ul \varrho(\sC | z)=1$ for every $A$-effectively closed class $\sC$ containing $z$. 
We will show that this is equivalent to lowness for ML-randomness.
  
 By  $\WTR$  we denote  the class of   weakly-2-random sets, i.e.\  sets that do not lie in any $\Pi_2^0$-null class of sets. Low(\WTR, \MLR) denotes the class of oracles $A$ such that $\WTR \sub \MLR^A$. Downey, Nies, Weber and Yu \cite{Downey.Nies.ea:06} have shown that Low(\WTR, \MLR)=Low(\MLR)

\begin{lemma}[Day and Miller \cite{Day.Miller:14}] \label{TrivialRandom}
Suppose $Z$ is \ML \ random, $A$ is low for ML-randomness,  and $\mathcal{P}$ is a $\Pi_1^{0,A}$ class containing $Z$. Then there exists a  $\Pi_1^0$ class $\mathcal{Q}\subseteq \mathcal{P}$ such that $A\in \mathcal{Q}$.
\end{lemma}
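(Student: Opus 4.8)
The plan is to realise $\mathcal{Q}$ as the complement of a plain c.e.\ open set $\mathcal{V}$ that contains $\cantor\setminus\mathcal{P}$ but avoids $A$; $\mathcal{V}$ will be built from the $A$-enumeration of $\cantor\setminus\mathcal{P}$, and $K$-triviality of $A$ will be used to control the errors caused by replacing the oracle $A$ with a computable approximation. Recall that lowness for ML-randomness coincides with $K$-triviality, so $A$ is $K$-trivial; fix a computable approximation $(A_s)$ of $A$. Two preliminary points. First, since $A$ is low for ML-randomness, $Z$ is ML-random relative to $A$; as $Z\in\mathcal{P}$ and $\mathcal{P}$ is $\Pi^{0,A}_1$, this forces $\lambda(\mathcal{P})>0$, for otherwise the clopen approximations $\mathcal{P}_0\supseteq\mathcal{P}_1\supseteq\cdots$ to $\mathcal{P}$ (with $\mathcal{P}_s$ determined by $A\upr s$) would have $\lambda(\mathcal{P}_s)\to 0$ and would furnish, $A$-computably, a relativized ML-test covering $Z$. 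Fix a rational $q$ with $0<q<\lambda(\mathcal{P})$, so that $\mathcal{U}:=\cantor\setminus\mathcal{P}$ is a $\Sigma^{0,A}_1$ class of measure at most $1-q$. Second, we use that $A\in\mathcal{P}$, which holds in the relevant applications (and is in any case demanded by the conclusion $A\in\mathcal{Q}\subseteq\mathcal{P}$); in particular no cylinder included in $\mathcal{U}$ contains $A$. It then suffices to produce a plain $\Sigma^0_1$ class $\mathcal{V}$ with $\mathcal{U}\subseteq\mathcal{V}$ and $A\notin\mathcal{V}$, and to set $\mathcal{Q}:=\cantor\setminus\mathcal{V}$.

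To build $\mathcal{V}$: at stage $s$, run $s$ steps of a $\Sigma^{0,A}_1$-enumeration of $\mathcal{U}$ with the oracle fixed to $A_s$, ignoring any step that would push the total enumerated measure past $1-q$ (a run with the true oracle never does this, so such a step certifies that $A_s$ guessed $A$ wrongly). When a fresh cylinder $[\sigma]$ is enumerated in this way, put $[\sigma]$ into $\mathcal{V}$ at once if $A_s\notin[\sigma]$; if instead $\sigma\prec A_s$, hold $[\sigma]$ back, releasing it into $\mathcal{V}$ only after the approximation has moved out of $[\sigma]$. The hold-and-release schedule is governed by a cost function $c$ that $A$ is known to obey --- this is the only place $K$-triviality enters: the release of $[\sigma]$ is tied to the cost that has accrued at levels below $|\sigma|$, and is accompanied by a Kraft--Chaitin request of weight $2^{-|\sigma|}$ describing the pertinent initial segment of $A$, charged against the change of $(A_s)$ that freed $[\sigma]$. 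The measure cap together with the standard bookkeeping for such constructions keeps the total request weight finite (indeed as small as we wish, by starting the construction late enough), so the auxiliary prefix-free machine is legitimate and $\lambda(\mathcal{V}\setminus\mathcal{U})<q$, whence $\lambda(\mathcal{V})<1$.

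For the verification, $\mathcal{V}\supseteq\mathcal{U}$ because each cylinder genuinely contained in $\mathcal{U}$ is eventually enumerated by the run with the correct oracle, does not contain $A$, and so is eventually released. For $A\notin\mathcal{V}$ one argues as in the proof that $K$-trivial sets are low for ML-randomness: a cylinder $[\sigma]\prec A$ can enter $\mathcal{V}$ only via an incorrect oracle guess, hence only through the hold-and-release mechanism, so $A\in\mathcal{V}$ would force infinitely many such releases and, through the accompanying bounded request set, descriptions of infinitely many initial segments of $A$ that are incompatible with $A$'s obedience to $c$ --- contradicting $K$-triviality. Then $\mathcal{Q}:=\cantor\setminus\mathcal{V}$ is the required $\pic$ class. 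I expect the middle paragraph to be the main obstacle: choosing the cost function and the release schedule so that the false positives in $\mathcal{V}\setminus\mathcal{U}$ are exactly as cheap as $A$'s obedience to $c$ can pay for, and in particular so that they cannot trap $A$, is precisely the decanter/golden-run difficulty imported from the theory of $K$-triviality, and it is there that the hypothesis on $A$ does all the work --- the randomness of $Z$ being used only to secure $\lambda(\mathcal{P})>0$, and with it the measure cap and the bound $\lambda(\mathcal{V})<1$.
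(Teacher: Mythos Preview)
The statement as printed carries a typo: the conclusion should be $Z\in\mathcal{Q}$, not $A\in\mathcal{Q}$. The paper's own use of the lemma, one paragraph later, reads ``there is a $\Pi_1^0$ class $\mathcal{Q}\subseteq\mathcal{P}$ such that $Z\in\mathcal{Q}$'', and nothing in the hypotheses places $A$ in $\mathcal{P}$ at all. You noticed something was off and patched it by silently adding $A\in\mathcal{P}$, but the resulting statement is \emph{false}: take a noncomputable $K$-trivial $A$, a string $\sigma\prec Z$ with $\sigma\not\prec A$, and set $\mathcal{P}=[\sigma]\cup\{A\}$ (a $\Pi^{0,A}_1$ class containing both $Z$ and $A$). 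Then $A$ is an isolated point of $\mathcal{P}$, so any $\Pi^0_1$ class $\mathcal{Q}\subseteq\mathcal{P}$ with $A\in\mathcal{Q}$, intersected with a small clopen neighbourhood of $A$, yields a $\Pi^0_1$ singleton $\{A\}$ --- forcing $A$ to be computable. So your target is not merely the wrong one; it is unreachable.

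The paper gives no proof of the lemma; it is quoted from Day and Miller. Your construction --- enumerate $\mathcal{U}=\cantor\setminus\mathcal{P}$ with the current guess $A_s$ as oracle, hold back cylinders containing $A_s$, and release them under a cost-function discipline --- is a recognisable golden-run outline for keeping $A$ out of the c.e.\ open set $\mathcal{V}$. For the intended lemma one must keep $Z$ out of $\mathcal{V}$, and $Z$, being ML-random, admits no computable approximation to drive a hold-and-release mechanism; the scheme does not transfer. The Day--Miller argument instead exploits $K$-triviality through its covering characterisation to produce $\Sigma^0_1$ overcovers of $\mathcal{U}$ whose excess over $\mathcal{U}$ is absorbed into a plain ML-test, and then invokes ML-randomness of $Z$ to avoid one such overcover. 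In particular the randomness of $Z$ does essential work, contrary to your closing remark that it is used only to secure $\lambda(\mathcal{P})>0$.
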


\begin{theorem}
   $A\in \cantor$ is low for ML-randomness $\LR$  
   
   \hfill $A$ is low for density randomness.
\end{theorem}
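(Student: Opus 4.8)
The proof has two halves.

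\emph{From lowness for density randomness to lowness for ML-randomness.} This is the routine direction, and it uses only the ML-randomness clause of density randomness together with the observation made above that weakly-$2$-randomness implies density randomness (since $\{z\in\+C:\ul\varrho(\+C\mid z)\le q\}$ is $\Pi^0_2$ and null for an effectively closed $\+C$ and a rational $q<1$). So suppose $A$ is low for density randomness. Given $Z\in\WTR$, the real $Z$ is density random, hence density random relative to $A$, hence ML-random relative to $A$; thus $\WTR\sub\MLR^A$. By the Downey--Nies--Weber--Yu theorem quoted above ($\mathrm{Low}(\WTR,\MLR)=\mathrm{Low}(\MLR)$), $A$ is low for ML-randomness.

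\emph{From lowness for ML-randomness to lowness for density randomness.} Now assume $A$ is low for ML-randomness, equivalently $A$ is $K$-trivial, and let $Z$ be density random; I must show $Z$ is density random relative to $A$. The ML-randomness clause is immediate, since $Z$ is ML-random and $A$ is low for ML-randomness. For the density clause it suffices, via Theorem~\ref{th:ML-dyadic-full} in its relativized form (its proof being uniform in the oracle), to show that the dyadic expansion of $Z$ is a dyadic density-one point relative to $A$, that is, that $\ul\varrho_2(\+P\mid Z)=1$ for every $\Pi^{0,A}_1$ class $\+P\sub\cantor$ with $Z\in\+P$; from now on $Z$ denotes that bit sequence. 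By the same (unrelativized) equivalence, being density random already tells us that $Z$ is a dyadic density-one point in the plain sense.

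\emph{The core step.} Fix a $\Pi^{0,A}_1$ class $\+P$ with $Z\in\+P$. The plan is to pass from $\+P$ to an honestly effectively closed class that still contains $Z$ and whose local measures along $Z$ track those of $\+P$, after which the unrelativized density-one property of $Z$ finishes the job. Apply Lemma~\ref{TrivialRandom} to $Z$, $A$ and $\+P$ (the hypotheses hold: $Z$ is ML-random, $A$ is low for ML-randomness, and $\+P$ is $\Pi^{0,A}_1$ with $Z\in\+P$), obtaining a $\Pi^0_1$ class $\+Q\sub\+P$ with $A\in\+Q$. Since the enumeration of the open complement $\cantor\setminus\+P$ ever queries only finitely much of $A$, and every such finite initial segment of $A$ survives on the tree defining $\+Q$, one can use $\+Q$ to manufacture an effectively closed class $\+R$ with $Z\in\+R\sub\+P$ for which $\leb_{Z\upr n}(\+R)$ stays close to $\leb_{Z\upr n}(\+P)$ for all $n$. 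As $Z$ is a dyadic density-one point and $Z\in\+R$, we get $\ul\varrho_2(\+R\mid Z)=1$; since $\+R\sub\+P$, this forces $\ul\varrho_2(\+P\mid Z)=1$, completing the argument.

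\emph{Expected main obstacle.} The delicate part is exactly the construction of $\+R$. Lemma~\ref{TrivialRandom} places $A$, not $Z$, into a $\Pi^0_1$ subclass of $\+P$, whereas the density computation needs an effectively closed \emph{subclass} of $\+P$ that contains $Z$ and whose local measures along $Z$ approximate those of $\+P$; extracting this seems to require genuinely exploiting the $K$-triviality of $A$ (presumably through a cost-function/golden-run construction in the spirit of the Day--Miller covering argument), not merely the fact that $A$ lies in some $\Pi^0_1$ class. The only other point requiring attention, and it is routine, is that Theorem~\ref{th:ML-dyadic-full} relativizes.
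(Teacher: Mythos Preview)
Your $\Leftarrow$ direction is correct and is exactly the paper's argument.

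For the $\Rightarrow$ direction, the ``expected main obstacle'' you identify is a phantom: the statement of Lemma~\ref{TrivialRandom} in the paper contains a typo. The conclusion of the Day--Miller lemma should read $Z\in\+Q$, not $A\in\+Q$; note that $A$ need not lie in $\+P$ at all, so the printed conclusion is not even well-typed. The actual Day--Miller result is that for $A$ low for ML-randomness, $Z$ ML-random, and $\+P$ a $\Pi^{0,A}_1$ class with $Z\in\+P$, there is a $\Pi^0_1$ class $\+Q\sub\+P$ with $Z\in\+Q$. With this in hand the paper's proof is one line: since $Z$ is density random and $\+Q$ is $\Pi^0_1$ with $Z\in\+Q$, we have $\ul\varrho_2(\+Q\mid Z)=1$, and $\+Q\sub\+P$ gives $\ul\varrho_2(\+P\mid Z)\ge\ul\varrho_2(\+Q\mid Z)=1$. (The paper phrases it as a contrapositive, but that is cosmetic.)

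Your proposed workaround---building an auxiliary $\+R$ from the knowledge that $A\in\+Q$---is not a proof as it stands. The sketch (``finite use of $A$, initial segments survive on the tree of $\+Q$'') does not produce a $\Pi^0_1$ class $\+R\sub\+P$ containing $Z$; knowing merely that $A$ lies in \emph{some} $\Pi^0_1$ class tells you nothing useful about approximating $\+P$ near $Z$. So there is a genuine gap in your argument, but it evaporates once the lemma is read correctly. Your remark that the relativized Khan--Miller theorem is needed (to pass between full and dyadic density relative to $A$) is right, and the paper uses it implicitly as well.
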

\begin{proof}
\lapf  Let \DR \ denote the class of density random sets. Since    $\WTR \subseteq \DR \subseteq \MLR$ and by the result in \cite{Downey.Nies.ea:06}, we have  \bc Low(\DR) $\subseteq$ Low(\WTR, \MLR)=Low(\MLR). \ec

\rapf  Suppose that $A$ is not low for density randomness, i.e., there exists a set $Z$  that is density random but not density random relative to $A$. If  $Z$ is not even  \ML \ random relative to $A$, then $A$ is not low for ML-randomness.
Otherwise, $Z$ is \ML \ random relative to $A$ but not density random relative to $A$. Hence   there exists a $\Pi_1^{0,A}$ class $\mathcal{P}$ containing $Z$ such that $\ul \varrho_2(\mathcal{P}|Z)<1$. By Lemma \ref{TrivialRandom}, there is  a  $\Pi_1^0$ class $\mathcal{Q}\subseteq \mathcal{P}$ such that $Z\in \mathcal{Q}$.   Then $\ul \varrho_2(\mathcal{Q}|Z)\leq  \ul \varrho_2(\mathcal{P}|Z)<1$,  so $Z$ is not density random, contradiction.
\end{proof}

\subsection{Upper density} The {upper}   density of   $\sC \subseteq \cantor$ at~$Z$ is: 
\[\ol \varrho_2(\sC|Z)=\limsup_{ n \to \infty }\leb_{Z \upr n} (\+ C)\]  
  Bienvenu et al.\ \cite[Prop.\ 5.4]{Bienvenu.Greenberg.ea:nd} have shown that for any effectively closed set $\+ P$ and   ML-random $Z \in \+ P$, we have $\ol \varrho_2(\+ P\mid Z) =1$. Actually
 ML-randomness of $Z$ was  too strong an assumption. The weaker notion of  partial computable randomness, defined in terms of partial computable martingales, already suffices. See \cite[Ch.\ 7]{Nies:book} for background on this notion.  

\begin{proposition} \label{prop:PC random upper density}  Let $\+ P \sub \cantor$ be effectively closed. Let  $Z \in \+ P$. 

\bi \item[(i)] If $Z$ is  partial computably random, then $\ol \varrho_2(\+ P \mid Z) =1$. 

\item[(ii)] Suppose that, in addition,  $\leb \+ P$ is computable. If $Z$ is Kurtz random, then $\ol \varrho_2(\+ P \mid Z) =1$.  \ei \end{proposition}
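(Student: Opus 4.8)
The plan is to reduce both items to the following statement, for each rational $q \in (0,1)$: the randomness hypothesis of the item prevents a real from lying in
\[
  E_q \;=\; \bigl\{\, Z \in \+ P \ :\ \leb_{Z\upr n}(\+ P) \le q \text{ for all sufficiently large } n \,\bigr\}.
\]
This suffices, because $\ol\varrho_2(\+ P \mid Z) = \limsup_n \leb_{Z\upr n}(\+ P)$, so a real $Z \in \+ P$ with $\ol\varrho_2(\+ P \mid Z) < 1$ lies in $E_q$ for some rational $q < 1$. Each $E_q$ is null: for $Z \in E_q$ one has $\liminf_n \leb_{Z\upr n}(\+ P) \le q < 1$, while $\sss \mapsto \leb_\sss(\+ P)$ is a bounded martingale, hence (Doob) converges a.e.\ to the characteristic function of $\+ P$, so $\leb_{Z\upr n}(\+ P) \to 1$ for a.e.\ $Z \in \+ P$. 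Write $\+ U = \cantor \setminus \+ P = \bigcup_s \+ U_s$ as an increasing, uniformly computable union of clopen sets, and put $\+ P_s = \cantor \setminus \+ U_s$, so that the $\+ P_s$ are clopen and decreasing with $\bigcap_s \+ P_s = \+ P$; note $\leb_\sss(\+ U) = 1 - \leb_\sss(\+ P)$, and for $Z \in E_q$ one has $\leb_{Z\upr n}(\+ U) \ge \delta := 1-q$ for all large $n$.

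For (ii), computability of $\leb \+ P$ makes the local measures uniformly computable: at a fixed length $n$ the reals $\leb(\+ P \cap [\sss])$ for $|\sss| = n$ are uniformly upper semicomputable --- infima of the computable decreasing sequences $\leb(\+ P_s \cap [\sss])$ --- and sum to the computable real $\leb \+ P$, so each is computable, and hence so is $\leb_\sss(\+ P) = 2^{|\sss|}\leb(\+ P \cap [\sss])$. Therefore, for each $N$, the class $\+ Q_N = \{\, Z : \leb_{Z\upr n}(\+ P) \le q \text{ for all } n \ge N \,\}$ is $\PI 1$, and $\+ P \cap \+ Q_N$ is a $\PI 1$ null class. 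Since no Kurtz random real belongs to any $\PI 1$ null class, a Kurtz random $Z \in \+ P$ lies in none of the $\+ P \cap \+ Q_N$; but if $\ol\varrho_2(\+ P \mid Z) < 1$ one could choose a rational $q < 1$ and an $N$ with $\leb_{Z\upr n}(\+ P) \le q$ for all $n \ge N$, placing $Z$ in $\+ P \cap \+ Q_N$. (When $\leb \+ P = 0$, $\+ P$ is itself a $\PI 1$ null class and the hypothesis $Z \in \+ P$ already fails.) This proves (ii).

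For (i), $\leb \+ P$ need not be computable and $\+ Q_N$ is only $\PI 2$, so instead, for each rational $q \in (0,1)$ and each $\ell \in \NN$, I would build a partial computable martingale $M_{q,\ell}$ succeeding on every $Z \in E_q$ with $N(Z) \le \ell$, where (for the fixed $q$) $N(Z)$ is the least $N$ such that $\leb_{Z\upr n}(\+ P) \le q$ for all $n \ge N$. Put $\delta = 1-q$, $c = 1/(1-\delta/2) > 1$, $\gamma = \min\{\sqrt c,\, 2 - 1/\sqrt c\} > 1$, and set $M_{q,\ell} \equiv 1$ on all strings of length $\le \ell$. Below each string of length $\ell$ run a recursion maintaining a set of \emph{active nodes} (initially that string, with value $1$), each a leaf of the domain built so far with $M_{q,\ell}$ already committed there but not below. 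At an active node $\sss$ of value $v$, search for the first stage $s$ at which some immediate successor $\tau$ of $\sss$ satisfies $\leb_\tau(\+ U_s) \ge \delta/2$; let $\tau'$ be the other successor. If $[\tau] \subseteq \+ U_s$ (a harmless case, since then $[\tau] \cap \+ P = \emptyset$; if also $[\sss] \subseteq \+ U_s$, just put $M_{q,\ell} \equiv v$ on $[\sss]$), commit $M_{q,\ell} \equiv 0$ on $[\tau]$ and $M_{q,\ell}(\tau') = 2v$, and make $\tau'$ active. Otherwise commit $M_{q,\ell}(\tau) = v/\sqrt c$ and $M_{q,\ell}(\tau') = v(2 - 1/\sqrt c)$, make $\tau'$ active, and below $\tau$ run a \emph{betting episode}: set $M_{q,\ell}(\rho) = (v/\sqrt c)\,\leb_\rho(\+ P_s)/\leb_\tau(\+ P_s)$ for $\rho \succeq \tau$ until $[\rho] \subseteq \+ P_s$, where $\rho$ becomes active with value $(v/\sqrt c)/\leb_\tau(\+ P_s) \ge v\sqrt c$, or $[\rho] \subseteq \+ U_s$, where one commits $M_{q,\ell} \equiv 0$.

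One checks readily that $M_{q,\ell}$ is a partial computable martingale: the martingale identity holds at every node (at an active node $2v = v/\sqrt c + v(2-1/\sqrt c)$ or $2v = 0 + 2v$; inside an episode $M_{q,\ell}$ is a constant multiple of $\rho \mapsto \leb_\rho(\+ P_s)$; trivially above length $\ell$), the domain is prefix-closed, and the two successors of a node always enter the domain together. Now let $Z \in E_q$ with $N(Z) \le \ell$. Every active node met along $Z$ lies at some level $m \ge \ell \ge N(Z)$, so $\leb_{Z\upr m}(\+ U) \ge \delta$; hence $\leb_{(Z\upr m)0}(\+ U) + \leb_{(Z\upr m)1}(\+ U) \ge 2\delta$, some successor has $\+ U$-measure $\ge \delta > \delta/2$ and is therefore detected at a finite stage --- so the search always terminates, and $M_{q,\ell}$ is defined along all of $Z$. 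At each such step the value of $M_{q,\ell}$ along $Z$ is multiplied by at least $\gamma$: if $Z$ follows the detected $\tau$, the ensuing episode terminates (as $Z \in \+ P \subseteq \+ P_s$) with factor $\ge \sqrt c$, and if $Z$ follows $\tau'$ the factor is $2 - 1/\sqrt c$ or $2$. Thus $M_{q,\ell}(Z\upr n) \to \infty$. Finally, a partial computably random $Z \in \+ P$ with $\ol\varrho_2(\+ P \mid Z) < 1$ would lie in $E_q$ with $N(Z)$ finite for some rational $q < 1$, so $M_{q,N(Z)}$ would succeed on it --- impossible. Hence $\ol\varrho_2(\+ P \mid Z) = 1$. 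I expect the routine content to be the reduction, the computations in (ii), and the martingale identities; the delicate point --- and the real content of (i) --- is that $M_{q,\ell}$ must be defined along \emph{all} of $Z$, which is why the construction opens with a block of constants up to a level $\ell$ that has to be guessed (forcing a family of martingales, not one) and why partiality is indispensable: a node at which $\+ U$ has small local measure is simply left with $M_{q,\ell}$ undefined below it.
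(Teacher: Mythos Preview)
Your proof is correct, but both parts take a somewhat different route from the paper.

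For (i), the paper's construction is simpler. From a node $\eta$ where $M$ is defined, it searches for a single depth $t>|\eta|$ at which the proportion of length-$t$ extensions $\tau$ with $[\tau]\cap\+ P_t\neq\ES$ drops below $q$; it then bets the entire capital on this set of ``survivors'', guaranteeing a factor $\ge 1/q$ at the next node along $Z$ (which, being in $\+ P$, always lies among the survivors). This avoids your split into detected child versus sibling and the separate betting episode against the clopen $\+ P_s$, and yields a uniform growth factor without the auxiliary constants $c,\gamma$. The two constructions share the essential features you identify: one must guess the level $n^*$ (your $\ell$), and below nodes where no drop in local measure is ever witnessed the martingale simply remains undefined, so partiality is unavoidable with this method.

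For (ii), the paper instead modifies the martingale of (i) to be \emph{total}: using the computability of $\leb_\eta(\+ P)$, at each node it waits until either the survivor set is small (then bets as before) or the local measure of $\+ P$ is seen to exceed $q'$ (then copies the current value one level down). It then exhibits a computable sequence $r(k)$ with $M(Z\uhr{r(k)})\ge q^{-k}$, invoking the martingale characterisation of Kurtz randomness. Your argument bypasses martingales entirely: since the local measures $\leb_\sss(\+ P)$ are uniformly computable, the class $\+ P\cap\{Z:\forall n\ge N\,\leb_{Z\upr n}(\+ P)\le q\}$ is a $\PPI$ null class outright, and Kurtz randomness excludes $Z$ from it. This is shorter and more direct; the paper's version has the minor advantage of making (ii) a visible extension of (i).
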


\begin{proof} Suppose  that  there is a  rational  $q < 1$ and an  $n^*\in \NN$ such  that $\leb_\eta(\+ P) < q$ for each $\eta \prec Z$ with $|\eta| \ge n^*$. 

\n (i).   We   define a partial computable martingale $M$ that succeeds on $Z$. 
Let $M(\eta)=1 $ for all strings $\eta$ with $|\eta| \le n^*$. Now suppose that   $M(\eta)$ has been defined, but $M$ is as yet undefined on any extensions of $\eta$. 
Search for $t= t_\eta > |\eta|$ such that $  p: =   |F|\tp{-(t -|\eta|)} \le  q$, where  \bc $F = \{\tau \succ \eta \colon  \, |\tau| = t   \lland [\tau] \cap \+ P _t \neq \ES\}$. \ec 
If $t_\eta$ and $F$ are  found, bet all the capital existing at $\eta$ along the strings in~$F$. That is, for $\tau \succeq \eta$, $|\tau| \le t$, let 
\bc$M(\tau) = M(\eta) \cdot| \{ \sss \in F \colon \, \sss \succeq \tau\}|/p$. \ec Then $M(\sss) = M(\eta)/p \ge M(\eta)/q$ for each $\sss \in F$. Now continue the procedure with all such strings $\sss \succ \eta$  of length $t$.

For each $\eta \prec Z$ of length at least $n^*$, we have $\leb_\eta(\+ P) < q$, so a $t_\eta$ as above  will be found. Since  $ Z \in \+ P$, $M$ never decreases along $Z$. Then,  since  $q < 1$, $M$ succeeds on $Z$.

\n (ii).   Under the extra hypothesis on $\+ P$, we can make $M$ total,  and also  bound from below  its growth   at an infinite computable set of positions along $Z$. This will  show that $Z$ is not Kurtz random (see Downey and Hirschfeldt \cite[Theorem 7.2.13]{Downey.Hirschfeldt:book}).  

Note that $\leb_\eta(\+ P) $ is a  computable real uniformly in $\eta$. Pick  rationals  $q' < q < 1$ and an  $n^*\in \NN$ such  that $\leb_\eta(\+ P) < q'$ for each $\eta \prec Z$ with $|\eta| \ge n^*$. In the same situation as above, search for $t_\eta > |\eta|$ such that we see $\leb_\eta(\+ P) > q'$ at stage $t_\eta$, or $F$ is found. One of the cases must occur. If the former case is seen first, we let $M(\tau) = M(\eta)$ for all $\tau \succ \eta$, $\tau \le t_\eta$. Otherwise,  we proceed as above.   

For the lower bound on the growth, define a computable function by \bc  $g(n) = \max \{t_\eta \colon \, n^* \le |\eta| \le  n \}$, \ec
for $n \ge n^*$,  and $g(n) =0$ otherwise. Let $r(k)= g^{(2k)}(n^*)$. Then 

\n $M(Z \upr {r(k)}) \ge q^{-k}$ for each $k$.
\end{proof}

It is not known at present whether the partiality of $M$  in (i)  is necessary.
\begin{question} Is there  a  $\PPI$ class $\+ P$  and a computably random $Z \in \+ P$  such  that $\ol \varrho_2(\+ P \mid Z) < 1$ ? \end{question}
In Subsection~\ref{ss:Leb-L1-comp} we  will  continue the  study of   $\PPI$ classes  of computable measure. We  show that such a class has density one at every  Schnorr random member.

\section{Martingale convergence theorem}  \label{s:MG convergence}

For background on martingales in probability theory, see for instance Durrett \cite[Ch.\ 4]{Durrett:96}. The martingale convergence theorem goes back to work of Doob.  Recall that  for a random variable $Y$ one defines $Y^+ = \max (Y,0)$.
\begin{theorem}  \label{thm:mg_convergence}  Let $\seq{X_n}\sN n$ be a martingale with $\sup_n E X_n^+ < \infty$. Then $X(w):= \lim_n X_n(w)$ exists almost surely, and $E|X| < \infty$.  \end{theorem}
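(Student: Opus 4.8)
The plan is to carry out the classical proof via Doob's upcrossing inequality. Fix rationals $a<b$ and, for a sample path, let $U_n[a,b]$ count the \emph{upcrossings} of $[a,b]$ completed by $X_0,\dots,X_n$, i.e.\ the number of disjoint time windows during which the path rises from a value $\le a$ to a value $\ge b$. The central estimate to establish is
\[
(b-a)\, E\, U_n[a,b] \;\le\; E\,(X_n-a)^+ \;\le\; E\,X_n^+ + |a| ,
\]
which I would prove by the ``betting against the increments'' argument: pass to the nonnegative submartingale $Y_k=(X_k-a)^+$, whose upcrossings of $[0,b-a]$ are exactly the upcrossings of $[a,b]$ by $\seq{X_n}\sN n$, and introduce the predictable $\{0,1\}$-valued process $H_k$ that is switched on precisely while an upcrossing attempt is in progress (on from the first time $Y$ hits $0$ after the previous attempt closed, off once $Y$ next reaches $b-a$). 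Writing $Y_n-Y_0=(H\cdot Y)_n+((1-H)\cdot Y)_n$ as a sum of two submartingale transforms by nonnegative predictable processes, both summands have nonnegative expectation, so $E\,(H\cdot Y)_n\le E\,Y_n-E\,Y_0\le E\,Y_n$; and pathwise each completed upcrossing forces the transform up by at least $b-a$, so $(H\cdot Y)_n\ge (b-a)U_n[a,b]$. Combining the two, and bounding $E(X_n-a)^+$ crudely, gives the displayed inequality, which is uniform in $n$ since $\sup_n E\,X_n^+<\infty$.

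Given this, the passage to almost-sure convergence is routine. Since $U_n[a,b]$ is nondecreasing in $n$, monotone convergence yields $E\,U_\infty[a,b]\le (|a|+\sup_n E\,X_n^+)/(b-a)<\infty$, hence $U_\infty[a,b]<\infty$ almost surely; as there are only countably many rational pairs, off a single null set $U_\infty[a,b]<\infty$ for \emph{every} rational $a<b$. On the complement of that null set one cannot have $\liminf_n X_n<\limsup_n X_n$ (otherwise choose rationals strictly in between and the path would cross $[a,b]$ infinitely often, forcing $U_\infty[a,b]=\infty$), so $X:=\lim_n X_n$ exists in $[-\infty,+\infty]$ almost surely.

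Finally, Fatou's lemma gives $E|X|\le \liminf_n E|X_n|$, and since $\seq{X_n}\sN n$ is a martingale, $E\,X_n=E\,X_0$ is constant (with $X_0$ integrable by definition of a martingale), so $E|X_n|=2E\,X_n^+-E\,X_0\le 2\sup_n E\,X_n^+-E\,X_0<\infty$; hence $E|X|<\infty$, and in particular $X$ is finite almost surely, so the limit is a genuine integrable random variable. The only real content is the upcrossing inequality of the first paragraph --- more precisely the pathwise bound $(H\cdot Y)_n\ge (b-a)U_n[a,b]$ together with the transform decomposition that bounds $E(H\cdot Y)_n$; the rest is bookkeeping with monotone convergence and Fatou.
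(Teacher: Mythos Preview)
Your proof is correct and is exactly the standard upcrossing-inequality argument the paper has in mind: the paper does not give its own proof of this theorem but simply refers to Durrett for ``the standard proof [which] uses Doob's upcrossing inequality,'' and what you have written is precisely that proof.
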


The standard proof (see e.g.\ \cite[Ch.\ 4, (2.10)]{Durrett:96}) uses  Doob's upcrossing inequality.
In randomness theory, researchers have so far only used  the  very restricted form  of the  powerful notion of a  martingale defined in the introduction: The probability space is Cantor space with the usual product measure. The filtration $\seq {\+ F_n}\sN n$ is defined  by letting  $\+ F_n$ be the set of events that   only depend on the first $n$ bits.    If $\seq {X_n}$ is adapted to   $\seq {\+ F_n}\sN n$, then $X_n$ has constant value on each $[\sss]$ for $\sssl =n$. Let $M(\sss) $ be this value. The martingale condition $E(X_{n+1} \mid \+ F_n) = X_n$ now turns into     $ \fa \sss \,  M(\sss0) + M(\sss 1) = 2M(\sss)$.  One also requires that the values be non-negative (so that one can reasonably define that a martingale  succeeds  along a bit sequence).  

Note that $EX_0 = M(\estring) < \infty$. Thus,    Theorem~\ref{thm:mg_convergence} turns into the following. 

\begin{thm} \label{thm:mg_convergence_restr} Let $M \colon \strcantor \to   {\mathbb R}^+_0$ be a  martingale in the restricted sense above. Then for almost every $Z \in \cantor$,    $X(Z) = \lim_n M(Z\uhr n) $ exists and is finite. Furthermore, $EX < \infty$.\end{thm}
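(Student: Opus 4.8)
The plan is to deduce Theorem~\ref{thm:mg_convergence_restr} directly from the classical Theorem~\ref{thm:mg_convergence} by checking that the restricted setup meets its hypotheses, and then to identify the almost-sure convergence statement with convergence of $M$ along bit sequences. First I would set up the translation explicitly: given $M\colon\strcantor\to\RR^+_0$ satisfying $M(\sss 0)+M(\sss 1)=2M(\sss)$, define $X_n\colon\cantor\to\RR^+_0$ by $X_n(Z)=M(Z\uhr n)$, and let $\+ F_n$ be the $\sigma$-algebra generated by the first $n$ coordinates (equivalently, generated by the clopen sets $[\sss]$ with $\sssl=n$). I would verify that $\seq{X_n}\sN n$ is adapted to $\seq{\+ F_n}\sN n$ (each $X_n$ is $\+ F_n$-measurable since it is constant on each $[\sss]$ with $\sssl=n$), that each $X_n$ is integrable with $E X_n=M(\estring)$ (an easy induction on $n$ using the averaging identity, since $\leb[\sss 0]=\leb[\sss 1]=\tfrac12\leb[\sss]$), and that the conditional-expectation identity $E(X_{n+1}\mid\+ F_n)=X_n$ holds — this is exactly the martingale identity $M(\sss 0)+M(\sss 1)=2M(\sss)$ read off on each atom $[\sss]$ of $\+ F_n$.

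Next I would observe that since $M$ is non-negative, $X_n^+=X_n$, so $\sup_n E X_n^+=\sup_n E X_n=M(\estring)<\infty$; here I use the remark in the excerpt that $EX_0=M(\estring)<\infty$. This is precisely the hypothesis of Theorem~\ref{thm:mg_convergence}, which then yields a random variable $X$ with $X(Z)=\lim_n X_n(Z)$ for almost every $Z\in\cantor$ and $E|X|<\infty$. Unwinding the definition of $X_n$, this says $\lim_n M(Z\uhr n)$ exists and is finite for almost every $Z$, and $EX<\infty$, which is the assertion of Theorem~\ref{thm:mg_convergence_restr}. (One harmless remark: $X$ can be taken non-negative, being an a.e.\ limit of non-negative functions, so $E|X|=EX$.)

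There is no real obstacle here — the content is entirely in matching definitions, and Theorem~\ref{thm:mg_convergence} does the analytic work. The only point that needs a line of care is the verification that $E(X_{n+1}\mid\+ F_n)=X_n$: since $\+ F_n$ is generated by the finitely many atoms $[\sss]$, $\sssl=n$, the conditional expectation is the function whose value on $[\sss]$ is the $[\sss]$-average of $X_{n+1}$, namely $\tfrac12\bigl(M(\sss 0)+M(\sss 1)\bigr)$, and this equals $M(\sss)=X_n$ on $[\sss]$ by the restricted martingale condition. Everything else — adaptedness, integrability, the computation $E X_n=M(\estring)$ — is routine, and the effectivity discussion plays no role in this particular statement.
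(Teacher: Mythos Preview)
Your proposal is correct and follows exactly the paper's approach: the paper simply notes that $EX_0 = M(\estring) < \infty$ and says ``Thus, Theorem~\ref{thm:mg_convergence} turns into the following,'' leaving the translation implicit. You have merely spelled out the routine verification (adaptedness, $E(X_{n+1}\mid\+ F_n)=X_n$, $\sup_n EX_n^+ = M(\estring)$) that the paper takes for granted.
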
 

If $\lim_n M(Z\uhr n)$ exists and is finite, we say that $M$ converges \emph{along} $Z$. 

We can now analyze the theorem in the effective setting, according to the main plan of the paper. Firstly we discuss the  effective form of Theorem~\ref{thm:mg_convergence_restr} in terms of computable martingales. It is not hard to show that a  computable martingale converges along any computably random bit sequence $Z$ (see~\cite[Theorem 7.1.3]{Downey.Hirschfeldt:book}). In other words, boundedness of all computable martingales along a bit sequence $Z$  already implies their convergence.  For the converse see the proof of \cite[Thm.\ 4.2]{Freer.Kjos.ea:14}, where success of a computable martingale is turned into oscillation of another. Thus, this  effective form of Theorem~\ref{thm:mg_convergence_restr} is matched to computable randomness.

Next, we weaken the effectiveness  to a notion based on computable enumerability. A martingale $L \colon \strcantor \to \RR^+_0$ is called \emph{left-c.e.} if $L(\sss)$ is a left-c.e.\ real uniformly in $\sss$.  Note that $Z$ is \ML-random iff every such martingale is bounded along $Z$ (see e.g.\ \cite[Prop.\ 7.2.6]{Nies:book}). Unlike the case of computable martingales, convergence requires a stronger form of algorithmic  randomness than boundedness. For instance, let $\+ U = [0, \Om)$ where $\Om$ is a left-c.e.\ \ML-ranom real, and let  $L(\sss) = \leb_\sss(\+ U)$; then the left-c.e.\ martingale $L$ is bounded by 1 but  diverges on $\Omega$ because $\Om$ is Borel normal.

The following theorem  matches left-c.e.\ martingale convergence to density randomness. It is due to unpublished 2012 work of the  ``Madison Group'' consisting of Andrews, Cai, Diamondstone, Lempp and Joseph S.\ Miller.  Recall  that by  Theorem~\ref{th:ML-dyadic-full}, a ML-random  $z$ is a  full density-one point iff $z$ is a dyadic density-one point.

\begin{theorem}[Madison group] \label{thm:Madison} The following are equivalent for a ML-random real $z \in [0,1]$ with binary expansion $0.Z$.
\bi 
\item[(i)] $z$ is a dyadic density-one point.   
\item[(ii)] Every left-c.e.\ martingale converges along $Z$. \ei 
\end{theorem}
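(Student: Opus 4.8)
The plan is to prove the two implications separately, using the upcrossing machinery from Doob's theorem on the "easy" direction and a density-based argument on the "hard" direction.

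For $(i) \Rightarrow (ii)$: Suppose $z$ is a dyadic density-one point and ML-random, but some left-c.e.\ martingale $L$ diverges along $Z$. By scaling we may assume $L(\estring) \le 1$. Since $Z$ is ML-random, $L$ is bounded along $Z$, say by a constant $c$. Divergence along $Z$ then means there are rationals $a < b$ such that $L(Z\upr n) < a$ infinitely often and $L(Z\upr n) > b$ infinitely often, i.e.\ the number of upcrossings of $[a,b]$ by $L$ along $Z$ is infinite. The strategy is to turn this into a failure of density at $Z$ for a suitable effectively closed class. Specifically, fix a point where $L$ has just come down below $a$; from that string $\sigma$, consider the event that $L$ climbs above $b$ before coming back down below some level; by the maximal/upcrossing inequality for the martingale $L$ restricted below $[\sigma]$ and renormalized, the measure of the set of extensions of $\sigma$ on which $L$ ever exceeds $b$ is at most $a/b < 1$ times $\leb[\sigma]$. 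Iterating along the (infinitely many) upcrossings produces, inside a $\PI1$ class containing $Z$ (obtained by removing a c.e.\ open set built from the left-c.e.\ approximation to $L$), a nested sequence of cylinders along $Z$ on each of which the local measure of the class is bounded away from $1$ by the fixed factor $a/b$. Hence $\ul\varrho_2(\+C \mid Z) \le a/b < 1$, contradicting density-one. The care needed is that $L$ is only left-c.e., not computable, so the "bad" open set must be enumerated from below using the approximation to $L$; one shows the enumerated open set is genuinely open and that $Z$ avoids it because $L$ stays bounded by $c$ along $Z$ while the open set captures the reals on which the relevant partial sums of increments grow too large.

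For $(ii) \Rightarrow (i)$: Suppose every left-c.e.\ martingale converges along $Z$, and let $\+C$ be an effectively closed class containing $z$, with $Z$ its dyadic expansion. Consider $M(\sigma) = \leb_\sigma(\+C)$. This $M$ is a martingale (averaging property of local measure), it is nonnegative, and because the complement of $\+C$ is c.e.\ open, $\leb_\sigma(\+C)$ is a right-c.e.\ real uniformly in $\sigma$ — so $M$ itself is \emph{right}-c.e., not left-c.e. The fix is to work with $1 - M(\sigma) = \leb_\sigma(\+C^c)$, which is left-c.e.\ uniformly in $\sigma$ and is again a martingale with values in $[0,1]$. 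By hypothesis $1-M$ converges along $Z$, hence $M(Z\upr n)$ converges to some limit $\ell$. By the upper density result available to us — Proposition~\ref{prop:PC random upper density}(i), since ML-random reals are partial computably random and $Z \in \+C$ — we have $\ol\varrho_2(\+C \mid Z) = 1$, so the limsup of $M(Z\upr n)$ is $1$. Since the limit exists it equals the limsup, giving $\ell = 1$, i.e.\ $\ul\varrho_2(\+C \mid Z) = 1$. As $\+C$ was an arbitrary effectively closed class containing $z$, we conclude $Z$ is a dyadic density-one point.

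The main obstacle is the $(i) \Rightarrow (ii)$ direction, specifically making the upcrossing-to-density reduction effective: one must produce a single $\PI1$ class $\+C \ni Z$ witnessing density failure, built uniformly from the left-c.e.\ approximation to the diverging martingale $L$, and control simultaneously the infinitely many upcrossing stages so that the local measure drops by the fixed multiplicative factor $a/b$ at cofinally many initial segments of $Z$. This requires a careful bookkeeping of where along $Z$ the martingale dips and rises, and an argument — via the martingale maximal inequality applied to renormalized restrictions of $L$ — that the corresponding "escape to level $b$" events have small relative measure in each cylinder. The remaining steps are comparatively routine: the right-c.e.\ versus left-c.e.\ bookkeeping in $(ii)\Rightarrow(i)$ and the invocation of Proposition~\ref{prop:PC random upper density}.
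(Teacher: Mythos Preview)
Your $(ii)\Rightarrow(i)$ is correct and is exactly the paper's argument: pass to the left-c.e.\ martingale $1-\leb_\sss(\+ C)$, use the hypothesis to get convergence, and invoke Proposition~\ref{prop:PC random upper density} to identify the limit as~$1$.

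Your $(i)\Rightarrow(ii)$ has a genuine gap, and it is the gap the paper's machinery is built to fill. Think about what your proposed $\PPI$ class $\+ C$ actually is. At a ``dip'' string $\sss\prec Z$ with $L(\sss)<a$, the set of extensions on which $L$ eventually exceeds $b$ has relative measure at most $a/b$; but $Z$ \emph{belongs} to that set (the next upcrossing takes $L$ above $b$ along $Z$). So if you enumerate that set into the complement of $\+ C$ you evict $Z$; if instead you want $\leb_\sss(\+ C)\le a/b$ you would need $[\sss]\cap\+ C$ \emph{contained} in that set, which means removing the reals on which $L$ stays below $b$ --- and that is a $\PPI$ event, not a c.e.\ open one. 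Either way there is no single $\PPI$ class in sight. Compounding this, the condition $L(\sss)<a$ is itself $\PPI$ in $\sss$: at a finite stage you only see $L_s(\sss)<a$, and the apparent dip may vanish as $L(\sss)$ grows. Your sketch acknowledges ``careful bookkeeping'' is needed but does not supply it; the difficulty is not cosmetic.

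The paper's route is genuinely different from the direct upcrossing picture you outline. It interposes an auxiliary test notion (a ``Madison test'': a computable approximation $\seq{U_s}$ of a set of strings satisfying a removal rule (a) and a weight-hereditary rule (b)) and proves two lemmas: from a Madison test failed by $Z$ one builds a \emph{ML-test} $\seq{\+ S^k}$ with $\leb_\sss(\+ S^k)\ge \tp{-k}$ for every $\sss\in U$, and ML-randomness of $Z$ then hands you the $\PPI$ class $\cantor\setminus\+ S^k$ for some $k$; conversely, from a diverging left-c.e.\ martingale one builds a Madison test by recording, via a stage-labelling $\gamma_s$, strings $\tau$ on which $L_s(\tau)-L_{\gamma_s(\sss)}(\tau)>\varepsilon$ for the parent $\sss$. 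Conditions (a) and (b) are exactly what allow strings to enter and leave $U_s$ as the left-c.e.\ approximation moves, while keeping $\weight(U_s)$ bounded --- the latter via a nontrivial induction (Claim~\ref{cl:tech}). This two-step detour through a ML-test, rather than a single $\PPI$ class, is the missing idea in your proposal.
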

 The writeup of the proof below, due to Nies, is based on discussions with Miller, and  Miller's  slides for his talks at a Semester dedicated to computability, complexity  and randomness at Buenos Aires  in 2013 \cite{Miller:13}. Nies supplied the technical details of  the verifications.   
 
\begin{proof}

\n The easier implication  (ii) $\to$ (i) was proved in   \cite[Corollary 5.5]{Bienvenu.Greenberg.ea:nd}.   Simply note that for a $\PPI$ class $\+ P$, the function $M(\sss) = 1 - \lambda_\sss (\+ P)$ is a left-c.e.\ martingale. Convergence of $M$ along $Z$ means that  $\rho(\+ P \mid Z)$ exists. Prop.\ \ref{prop:PC random upper density} implies that the upper density $\ol \rho(\+ P \mid Z)$ equals~$1$. Therefore $\rho(\+ P \mid Z)=1$.

\vsps

\n (i) $\to$ (ii).  We can   work within Cantor space because the  dyadic density  of a class $\+ P \sub [0,1]$ at $z$ is the same as the density of $\+ P$ at $Z$ when $\+ P$ is viewed as a subclass of   Cantor space.  We use the  technical  concept of a ``Madison test''. Such a test is  intended to capture   the oscillation of a  left-c.e.\ martingale along a bit sequence.   We will now  introduce and motivate this     concept. We define the weight  of a set   $X \sub \cantor$  by  
  $${\weight} ( X) = \sum_{\sss \in X} \tp{-\sssl}.$$ 
  Let $\sss^\prec = \{ \tau \in \strcantor \colon \, \sss \prec \tau \}$ denote the set of proper extensions of a string~$\sss$.
\begin{definition}  A \emph{Madison test} is a computable sequence $\seq {U_s}\sN s$ of computable subsets of $\strcantor$  such that $U_0 = \ES$,      there is  a  constant $c$ such that for each  stage $s$ we have $\weight (U_s)  \le c$, and for  all strings $\sss, \tau$, 
\bi \item[(a)]  $\tau \in U_s - U_{s+1} \to \ex \sss \prec \tau \, [ \sss \in U_{s+1} - U_s]$
\item [(b)] $\weight (\sss^\prec \cap U_s) > \tp{-\sssl} \to \sss \in U_s$. 
\ei
Note that by (a),  $U(\sss) := \lim_s U_s(\sss)$ exists for each $\sss$; in fact, $U_s(\sss)$ changes at most $\tp{\sssl}$ times.

We say that $Z$ \emph{fails} the test  $\seq {U_s}\sN s$ if $Z \uhr n \in U$ for infinitely many $n$; otherwise $Z$ \emph{passes} $\seq {U_s}\sN s$.
\end{definition}
 
We show    that $\weight(U_s)\leq \weight(U_{s+1})$, so that  $\weight(U) = \sup_s \weight(U_s) < \infty $  is a left-c.e.\ real.   Suppose  that $\sss$ is  minimal under the prefix relation such that  $\sigma\in U_{s+1}-U_s$.  By (b) and since $\sigma\not \in U_{s}$, we have  $\weight(\sigma^\prec \cap U_s)\leq 2^{-|\sigma|}$. So enumerating $\sss$ adds $\tp{-\sssl}$ to the weight, while the weight of all the strings  above $\sigma$ that are  removed from $U_s$  is at most $2^{-|\sigma|}$.

The implication (i)$\to$(ii) is proved in two steps.
 
\vsps 

\n {\it Step 1.}   Lemma \ref{lem: density to Madison} shows that if $Z \in \cantor$ is a ML-random dyadic density-one point, then $Z$   passes all Madison tests. 

\vsps 

{\n Step 2.}  Lemma~\ref{Madison to  MG convergence} shows  that if $Z$ passes all Madison tests, then every left-c.e.\ martingale converges along $Z$.

\vsps

\begin{lemma} \label{lem: density to Madison} Let $Z$ be a ML-random dyadic density-one point. Then $Z$ passes each Madison test. \end{lemma}

\begin{proof} Suppose that  a ML-random bit sequence $Z$ fails a Madison test $\seq {U_s}\sN s$. We will build a ML-test $\seq {\+ S^k} \sN k$ such that $\fa \sss \in U \, [ \leb_\sss(\+ S^k) \ge \tp{-k}]$, and  therefore $$\ul \varrho(\cantor - \+ S^k \mid Z) \le 1- \tp{-k}.$$ Since $Z$ is ML-random we have $Z \not \in \+ S^k$ for some $k$. So $Z$ is not a dyadic density-one point, as witnessed by  the $\PPI$ class $\cantor - \+ S^k$. 

To define $\seq {\+ S^k} \sN k$ we construct, for each $k, t \in \omega$ and each string $\sss \in U_t$,  clopen sets $\+ A^k_{\sss,t} \sub [\sss]$ given by  strong  indices for finite sets of strings computed from $k, \sss, t$, such that  $\leb (\+ A^k_{\sss,t} )= \tp{-\sssl -k}$ for each $\sss \in U_t$.   We  will let $\+ S^k$ be the union of these  sets over all  $\sss$ and $t$. The  clopen sets  for $k$ and a final string $\sss \in U$ will be disjoint from the    $\PPI$ class $\+ S^k$. Condition  (b) on Madison tests ensures that  during the construction,  a  string $\sss$ can  inherit the clopen sets belonging to   its extensions $\tau$,  without risking that  the $\PPI$ class becomes empty above $\sss$.

\vsp

\n \emph{Construction of clopen sets $\+ A^k_{\sss,t} \sub [\sss]$ for $\sss \in U_t$.}

\n  At stage $0$ no    sets need to be defined  because  $U_0 = \ES$. 
  At stage $t+1$, suppose that    $\sss \in U_{t+1} - U_t$. 
 For each $\tau \succ \sss$ such that  $\tau \in U_t - U_{t+1}$, put $\+ A^k_{\tau, t}$ into an  auxiliary clopen set  $ \wt {\+  A}^k_{\sss, t+1}$. Since $\sss \not \in U_t$, by  condition  (b) on Madison tests,  we have  $\weight (\sss^\prec \cap U_t) \le \tp{-\sssl}$. Inductively we have $\leb (\+ A^k_{\tau,t} )= \tp{-|\tau| -k}$ for each $\tau$, and hence \bc $\leb (\wt {\+ A}^k_{\sss,t+1}) \le  \tp{- \sssl -k}$. \ec Now,  to obtain $ \+ A^k_{\sss , t+1}$ we simply add mass from $[\sss]$ to   $\wt  {\+ A}^k_{\sss , t+1}$ in order to ensure equality as required. 

Let $$\+ S^k_t = \bigcup_{\sss \in U_t} \+ A^k_{\sss,t}.$$ Then $\+ S^k_t \sub \+ S^k_{t+1}$ by condition  (a) on Madison tests. Clearly \bc $\leb \+ S^k_t \le  \tp {-k} \weight (U_t)  \le \tp{-k}$. \ec So $\+ S^k = \bigcup_t \+ S ^k_t$ determines  a ML-test.  Since $Z$ is ML-random, we have  $Z \not \in \+ S^k$ for some $k$. If $\sss \in U$ then by construction  $\leb \+ A^k_{\sss,s} = \tp{-\sssl -k}$ for almost all $s$. Thus  $ \leb_\sss(\+ S^k) \ge \tp{-k}$  as required.  \end{proof}

 We now take  the second step of the argument. We begin with a  remark on  Madison  tests. 
\begin{remark} \label{rem:Doob Madison} {\rm    
	 Consider a computable rational-valued martingale $B$; that is, $B(\sss)$ is a rational uniformly computed (as a single output)   from~$\sss$. Suppose that $c,d$ are rationals, $0< c< d$,  $B(\estring)< c$, and  $B$  oscillates between values less than $c$ and greater than $d$ along a bit sequence $Z$. An \emph{upcrossing} (for these values)  is a pair of strings $\sss \prec \tau$ such that $B(\sss)< c$, $B(\tau)>d$, and $B(\eta) \le d$ for each $\eta$ such that $\sss \preceq \eta \prec \tau$.

Dubins' inequality from probability theory  limits the amount of oscillation a martingale can have; see, for instance, \cite[Exercise 2.14 on pg.\ 238]{Durrett:96}. (Note that this inequality implies a version of  the better-known Doob  upcrossing inequality by taking the sum over all $k$.)    In the restricted setting of martingales on $\strcantor$,  Dubins' inequality shows that for each $k$  \begin{equation} \label{eqn: upcr} \leb\{X \colon \,  \text{there are $k$ upcrossings along} \,  X\} \le (c/d)^k.\end{equation}
See \cite[Lemma 5.8]{Bienvenu.Greenberg.ea:nd} for a proof of this fact using    notation close to the one of  the present paper.

Suppose  now that $2c< d$. We   define a Madison test that     $Z$ fails.  Strings never leave the computable  approximation of the  test, so (a) holds. 

 We put the empty string $\estring$ into $U_1$. If $\sss \in U_{s-1}$,  put into $U_s$ all strings $\eta$ such that $B(\tau) >d$ and $B(\eta) < c$ for some  $\tau \succ \sss$ chosen prefix minimal, and $\eta \succ \tau$ chosen prefix minimal. Let  $U = \bigcup U_s$ (which  is in fact computable). For each $\sigma$, by the   inequality (\ref{eqn: upcr}) localised to $[\sss]$, we have  $\weight (\sss^\prec \cap U) \le    \tp{-\sssl}  \sum_{k\ge 1} (c/d)^k   <  \tp{-\sssl}$, so (b) is satisfied vacuously.  }
  \end{remark}
  
  As   noted in  \cite[Section 5]{Bienvenu.Greenberg.ea:nd}, if $B =\sup B_s$ is a left-c.e.\ martingale where   $\seq{B_s}\sN s$ is a  uniformly computable sequence of martingales,    an upcrossing  apparent at stage $s$ can later disappear because $B(\sss)$ increases.   We will see in the proof of Lemma~\ref{Madison to  MG convergence} that  in this case, the full power of the conditions (a) and (b) is needed to obtain a Madison test from the oscillatory  behaviour of $B$. 

 We use  Remark~\ref{rem:Doob Madison} for an intermediate fact, which is not as obvious as one might expect. 
\begin{lemma} 
Suppose that $Z$ passes each Madison test. Then $Z$ is computably random.
\end{lemma}
\begin{proof}  Suppose $Z$ is not computably random. Then some   computable rational-valued  martingale   $M$ with the savings property succeeds on $Z$ (see \cite[Ex.\ 7.1.14 and Prop.\ 7.3.8]{Nies:book} or \cite{Downey.Hirschfeldt:book}). The proof of \cite[Thm.\ 4.2]{Freer.Kjos.ea:14} turns success of such a martingale    into oscillation of another  computable rational-valued   martingale $B$. Slightly adapting the (arbitrary)  bounds for the oscillation given there, we may assume that  $B$ is as in Remark~\ref{rem:Doob Madison} for $c=2, d=5$:  if $M$ succeeds along $Z$,  then there are infinitely many  upcrossings $\tau \prec \eta \prec Z$,  $B(\tau) < 2$ and $B(\eta) >5$. Therefore $Z$ fails the  Madison test constructed in Remark~\ref{rem:Doob Madison}. 
\end{proof}

We are now ready for the main part of the second step.
\begin{lemma} \label{Madison to  MG convergence} Suppose that $Z$ passes each Madison test. Then every left-c.e.\ martingale $L$ converges along $Z$.  In particular, $Z$ is ML-random. \end{lemma}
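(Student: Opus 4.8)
The plan is to prove the contrapositive in two stages. \emph{Stage 1} shows that passing all Madison tests already forces $Z$ to be ML-random (which also yields the ``in particular'' clause). \emph{Stage 2} shows that such a $Z$ admits no oscillating left-c.e.\ martingale; combined with Stage 1 this gives that every left-c.e.\ martingale is bounded and non-oscillating along $Z$, hence converges there.

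\textbf{Stage 1.} Suppose $Z$ is not ML-random and fix an ML-test $\langle\mathcal{G}_k\rangle_{k\ge 1}$ with $Z\in\bigcap_k\mathcal{G}_k$ and, say, $\lambda(\mathcal{G}_k)\le 4^{-k}$. Enumerate each $\mathcal{G}_k$ as a union of cylinders with pairwise incomparable generating strings, let $V_s$ be the finite set of all such generators (over all $k$) enumerated by stage $s$, and let $U_s$ be the closure of $V_s$ under condition~(b), obtained by repeatedly adjoining any $\sigma$ with $\weight(\sigma^\prec\cap U_s)>2^{-|\sigma|}$. Strings are only ever added, so $U_0=\ES$ and condition~(a) holds vacuously; the pull-back closure increases weight by at most a bounded factor, so $\weight(U_s)$ stays below a fixed constant; and $\langle U_s\rangle$ is a computable nondecreasing sequence. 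Thus $\langle U_s\rangle$ is a Madison test. Since $Z\in\mathcal{G}_k$ for each $k$, $Z$ has a prefix in $V_s\subseteq U$ whose length tends to infinity with $k$; so $Z$ fails this test. Hence $Z$ is ML-random, and therefore every left-c.e.\ martingale is bounded along $Z$.

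\textbf{Stage 2.} Let $L=\sup_s L_s$ be a left-c.e.\ martingale, with $\langle L_s\rangle$ a nondecreasing uniformly computable sequence of rational-valued martingales, and suppose for contradiction that $\liminf_n L(Z\upharpoonright n)<\limsup_n L(Z\upharpoonright n)$; by Stage 1 both quantities are finite, so fix rationals $a<b$ strictly between them. I would build a Madison test that is failed by every $Y$ with $\liminf_n L(Y\upharpoonright n)<a$ and $\limsup_n L(Y\upharpoonright n)>b$; since $Z$ is such a $Y$, this contradicts the hypothesis. The construction follows Remark~\ref{rem:Doob Madison}: $U$ is an alternating tree whose level-$2i$ nodes are ``dips'' (strings $\eta$ with $L_s(\eta)<a$) and whose level-$(2i+1)$ nodes are ``peaks'' (prefix-minimal extensions $\tau$ of a level-$2i$ dip with $L_s(\tau)>b$), the next dips being chosen prefix-minimally above each peak, so that any $Y$ oscillating below $a$ and above $b$ meets $U$ at every level and fails the test. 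The weight added at each level is controlled by the localised Dubins inequality (\ref{eqn: upcr}); since we cannot assume $b>2a$, the bare geometric bound of Remark~\ref{rem:Doob Madison} no longer suffices, and condition~(b) must genuinely contribute to keeping $\weight(U_s)$ bounded.

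\textbf{The main obstacle} is exactly the one anticipated before this lemma. Because $L$ is only left-c.e., ``$L_s(\tau)>b$'' is stable in $s$ but ``$L_s(\eta)<a$'' is not, so a dip $\eta$ can be invalidated at a later stage when $L(\eta)$ rises to $\ge a$. One cannot merely delete $\eta$ and the nodes below it: the new prefix-minimal dip above $\eta$'s parent peak is an \emph{extension} of $\eta$, not a prefix, so such a deletion would violate condition~(a); and retaining all stale dips makes $\weight(U_s)$ unbounded. The technical heart of the proof is to arrange the bookkeeping --- using the pull-back of condition~(b) to collapse a stale branch and re-anchor the alternating chain at a shorter string --- so that whenever a node leaves $U_s$ a proper prefix of it simultaneously enters $U_s$, while the weight estimate via (\ref{eqn: upcr}) still goes through. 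Granting this, $Z$ fails the constructed Madison test, a contradiction; hence $L$ converges along $Z$, and since Stage 1 already gives ML-randomness, the proof is complete.
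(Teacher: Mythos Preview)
Your Stage~2 correctly isolates the obstacle but does not overcome it, and the fix you sketch will not work. You propose alternating layers of ``dips'' $\{\eta:L_s(\eta)<a\}$ and ``peaks'' $\{\tau:L_s(\tau)>b\}$, and when a dip $\eta$ is invalidated you want to ``use the pull-back of condition~(b) to \ldots re-anchor at a shorter string''. But condition~(b) forces $\sigma$ into $U_s$ only when $\weight(\sigma^\prec\cap U_s)>2^{-|\sigma|}$; nothing links this event to $L_{s+1}(\eta)\ge a$, so there is no reason a proper prefix of $\eta$ enters $U_{s+1}$ at the moment $\eta$ must leave, and condition~(a) fails. The paper's construction abandons absolute thresholds entirely. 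Each $\sigma\in U_s$ carries a stage label $\gamma_s(\sigma)=t$, and the children of $\sigma$ are the prefix-minimal $\tau\succ\sigma$ with $L_s(\tau)-L_t(\tau)>\varepsilon$. Because $L_s(\tau)-L_t(\tau)$ is nondecreasing in $s$, a node $\tau$ can leave $U_{s+1}$ only because some shorter $\eta\prec\tau$ now satisfies the increment condition and supplants it as a minimal child; condition~(a) then holds for free. The weight bound and condition~(b) come not from Dubins' inequality~(\ref{eqn: upcr}) but from the increment estimate $2^{-|\eta|}\bigl(L_s(\eta)-L_r(\eta)\bigr)\ge\varepsilon\cdot\weight(U_s\cap\eta^\prec)$ for $\eta\in U_s$ with label $r$, proved by induction on tree depth. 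This increment-with-labels device is the missing idea.

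Your Stage~1 also departs from the paper and has its own gap. The paper does not prove ML-randomness directly; it first shows only that $Z$ is \emph{computably} random (by turning a successful computable martingale into an oscillating one and invoking Remark~\ref{rem:Doob Madison}), and this is used at the end of the main argument to ensure each $L_t$ converges along $Z$, whence $Z$ fails the constructed test. Your direct route to ML-randomness rests on the unproved assertion that closing a set of strings under~(b) inflates weight by only a bounded factor; even if true, this requires an argument.
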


\begin{proof}
Let $L$ be a left-c.e.\ martingale. Then  $L(\sss) = \sup_s L_s(\sss)$ where $\seq {L_s}$ is a uniformly computable sequence of martingales, and $L_0 = 0$ and $L_s(\sss) \le L_{s+1}(\sss)$ for each $\sss$ and~$s$.
Since $Z$ is computably random,   $\lim_n L_s(Z \uhr n)$ exists  for each $s$.
If $L$ diverges along $Z$, then $\lim_n L(Z\uhr n)= \infty$ or  there is  a positive $\varepsilon < L(\estring) $ such that  
\[\limsup_n L(Z \uhr n) - \liminf_n L(Z\uhr n) > \varepsilon.\]
Based on this fact  we  define a Madison test that    $Z$ fails. Along with the $U_s$ we define a uniformly computable labelling  function $\gamma_s \colon \, U_s \to \{0, \ldots,s\}$.  If $\lim_n L(Z\uhr n)= \infty$ set $\varepsilon = 1$. The construction is as follows.

\vsp

\leftskip 0.5cm
\n {\it  Let $U_0 = \ES$. For $s>0$ we put  the empty string  $\estring $ into $U_s$ and let $\gamma_s(\estring) = 0$. If already $\sss \in U_s$ with $\gamma_s(\sss) = t$, then we  also put into $U_s$ all  strings $\tau \succ \sss$ that are minimal under the prefix ordering  with $L_s(\tau) - L_t(\tau) > \varepsilon$. Let $\gamma_s(\tau) $ be the least $r$ with $L_r(\tau) - L_t(\tau) > \varepsilon$.   }

\leftskip 0cm

\vsp

\n Note that $\gamma_s(\tau)$ records    the greatest stage $r \le s $ at which $\tau$ entered~$U_r$. Intuitively, this construction  attempts to find   upcrossings  between   values  (arbitrarily  close to)  $ \liminf_n L(Z\uhr n) $ and $ \limsup_n L(Z\uhr n)$. Clearly \bc $\lim_n L_t(Z \uhr n) \le  \liminf_n L(Z\uhr n)$. \ec So, if  a string  $\tau \prec Z$ as above  is sufficiently long,  then   we have an upcrossing of the required kind.

We  verify that  $\seq {U_s}\sN s$ is a Madison test. For condition (a), suppose that  $\tau \in U_s - U_{s+1}$. Let $\sss_0 \prec \sss_1 \prec \ldots \prec \sss_n = \tau$ be the prefixes of $\tau $ in $U_s$. We can  choose a least  $i< n$  such that $\sss_{i+1}$ is no longer the minimal extension of $\sss_i$ at stage $s+1$. Thus there is $\eta$ with $\sss_i \prec \eta \prec \sss_{i+1}$ and $L_{s+1}(\eta) - L_{\gamma_s(\sss_i)}(\eta) > \varepsilon$. Then $\eta \in U_{s+1}$ and $\eta \prec \tau$,  as required.

We  verify condition (b). 
We fix $s$,  and for  $t \le s $ write  \bc $M_t(\eta)=  L_s(\eta ) - L_t(\eta)$. \ec 
Thus $M_t$ is  the increase of $L$ from $t$ to $s$. Note that  $M_t$ is a martingale.
\begin{claim}  \label{cl:tech} For each $\eta \in U_s$, where $\gamma_s( \eta) = r$, we have
\[\tp{- |\eta|}M_r(\eta) \ge \varepsilon \cdot  \weight (U_s \cap \eta^\prec). \]
\end{claim}
\n In particular, if   $\eta = \estring$ then $r=0$; we  obtain that $\weight (U_s)$ is bounded by  a constant $c= L(\estring)\varepsilon^{-1} +1$ (the ``$+1$" is for the empty string in $U_s$), as required.
\n 

For $\sss \in U_s$ and $k \in \NN$, let  $U_s^{\sss}(k)$  be the set of strings properly extending   $\sss$ and at a distance  to  $\sss$ of at most $k$, that is, the set of strings $\tau$ such that   there is $\sss = \sss_0 \prec \ldots \prec \sss_m =\tau$ on $U_s$ with $m \le k$ and $\sigma_{i+1}$ a child (i.e., immediate successor) of $\sss_i$ for each $i<m$.   To establish the claim, we show by induction on $k$ that 
\[\tp{- |\eta|}M_r(\eta) \ge \varepsilon \cdot  \weight (U_s^{\eta}(k)). \]
If $k=0$ then $U_s^{\eta}(k)$ is empty so   the right hand side equals $0$. Now suppose that  $ k>0$. Let $F$  be the  set of  of children of $\eta$ on $U_s$.  For $\tau \in F$ write  $r_\tau = \gamma_s(\tau)$. Then $s \ge r_\tau > r$ by the definition of the function  $\gamma_s$. 
By the inductive hypothesis, we have for each  $\tau \in F$%
\begin{eqnarray}   \label{eqn:taus} \tp{-|\tau|} M_{r}( \tau) & = & \tp{-|\tau|} [(L_{r_\tau}(\tau)- L_r(\tau)) + M_{r_\tau}(\tau)] \\
								& \ge  & \tp{-|\tau|} \cdot \varepsilon + \varepsilon \cdot \weight (U_s ^{ \tau}(k-1)). \nonumber   \end{eqnarray}
								Then, taking the sum over all $\tau \in F$, 
 $$ 	\tp{-|\eta|} M_r(\eta) \ge \sum_{\tau \in F} \tp{-|\tau|}M_r(\tau)	\ge  \varepsilon \cdot \weight (U_s ^{\eta}(k)). $$
 The first inequality holds by  a general fact about for martingales attributed to Kolmogorov (see \cite[7.1.8]{Nies:book}), and uses  that $F$ is  an antichain. For the second inequality we have used (\ref{eqn:taus}) and  that $U_s^{ \eta}(k) =F \cup \bigcup_{\tau \in F} U_s ^{ \tau}(k-1)$. This completes the induction and  shows the claim. 
 
 Now, to obtain (b), suppose that  $\weight (U_s \cap \sss^\prec)  > \tp{- |\sss|}$. We  show that $\sss \in U_s$. Assume otherwise. Let $\eta \prec \sss$ be   in $U_s$ with $|\eta|$ maximal, and let $r = \gamma_s(\eta)$. Let  now $F$   be the set of  prefix minimal extensions  of $\sss$ in $U_s$, and $r_\tau = \gamma_s(\tau)$.   Then $L_{r_\tau}(\tau)- L_r(\tau)> \varepsilon$ for $\tau \in F$. Since  $\tau \in U_s$,  we can apply    Claim~\ref{cl:tech} to $\tau$. 
 We now  argue similar to  the above, but  with   $\sss$ instead of $\eta$,   and using in the last line that $ U_s \cap \sss^\prec = F \cup \bigcup_{\tau \in F} (U_s  \cap \tau^\prec)$:
\begin{eqnarray*} 	\tp{-|\sss|} M_r(\sss) & \ge &  \sum_{\tau \in F} \tp{-|\tau|}M_r(\tau)	\\ 
                              & =  & \sum_{\tau \in F} \tp{-|\tau|} [L_{r_\tau}( \tau ) - L_r(\tau) + M_{r_\tau}(\tau)] \\
                              & \ge &  \sum_{\tau \in F} \tp{-|\tau|} [\varepsilon + \varepsilon \cdot  \weight ( U_s \cap \tau^\prec) ] \\
				& \ge &  \varepsilon \cdot \weight (U_s \cap \sss^\prec). \end{eqnarray*}
  Since  $\weight (U_s \cap \sss^\prec)  > \tp{- |\sss|}$, this implies that $M_r(\sss) > \varepsilon$. Hence  some $\eta'$  with	$\eta \prec \eta' \prec \sss$ is in $U_s$, contrary to the maximality of $\eta$.	
  
  This concludes the verification that $\seq {U_s}\sN s$ is a Madison test. 		As mentioned, for each $r$ there are infinitely many $n$ with $L(Z\uhr n) - L_r(Z \uhr n) > \varepsilon$. This shows that $Z$ fails this test: suppose inductively that we have $\sss \prec Z$ such that there is a least  $r$ with $\sss \in U_t$ for all $t\ge r$ (so that $\gamma_t(\sss) = r$ for all such $t$). Choose $n > \sssl$ for this $r$. Then from some stage on $\tau = Z \uhr n$ is a viable extension of $\sss$, so $\tau$, or some prefix of it that is  longer than $\sss$, is in~$U$.  	
\end{proof} 

\n This concludes our proof of Thm.\ \ref{thm:Madison}. \end{proof}


\section{Differentiability of non-decreasing functions} \label{s:STACS14}
We consider an effective version,  in the sense of computable enumerability, of Lebesgue's theorem~\ref{thm:Leb}   that non-decreasing functions are  almost everywhere differentiable.
Freer, Kjos-Hanssen, Nies and Stephan \cite{Freer.Kjos.ea:14} studied a class of non-decreasing functions   they called \emph{interval-c.e.} They showed (with J.\ Rute) that the continuous interval-c.e.\ functions are precisely the variation functions of computable functions.

\begin{definition} \label{def:intervalce}  
A non-decreasing  function $f\colon [0,1]\to \R$ is \emph{interval-c.e.} if $f(0)=0$, and $f(y)-f(x)$ is a left-c.e.\ real, uniformly in  all rationals $x<y$. 
\end{definition}

We  match an  effective version of Lebesgue's theorem, stated   in terms of interval-c.e.\ functions,  to density randomness. This  result is due to Nies in  the conference paper~\cite{Nies:14}. We give a more detailed proof  here.

\begin{theorem}[\cite{Nies:14}] \label{thm:interval left-c.e. MG and derivative}   $z\in [0,1]$ is density random $\LR$

\hfill      $f'(z)$ exists for each  interval-c.e.\ function $f \colon \, [0,1] \to \RR$.  
\end{theorem}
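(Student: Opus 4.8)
The plan is to prove the theorem by connecting interval-c.e.\ functions to left-c.e.\ martingales, and then invoking the equivalence of density randomness with left-c.e.\ martingale convergence from Theorem~\ref{thm:Madison} (together with Theorem~\ref{th:ML-dyadic-full} to pass between dyadic and full density). The key bridge is the following correspondence: given an interval-c.e.\ function $f$, the \emph{slope} of $f$ over a dyadic interval $[\sss]$, namely the quantity $s_f(\sss) = \tp{\sssl}\bigl(f(0.\sss\,\overline 1) - f(0.\sss\,\overline 0)\bigr)$ (i.e., the average rate of increase of $f$ across the basic open interval coded by $\sss$), is a left-c.e.\ martingale, because $f(q)-f(p)$ is left-c.e.\ uniformly in rationals $p<q$ and the martingale identity $2s_f(\sss) = s_f(\sss 0) + s_f(\sss 1)$ is just additivity of increments. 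Conversely, an arbitrary left-c.e.\ martingale $M$ is realized as the slope martingale of the interval-c.e.\ function $f_M$ defined by $f_M(x) = \int_0^x M$, interpreting $M$ as a density; one checks $f_M$ is nondecreasing, $f_M(0)=0$, and its increments are left-c.e.\ uniformly because $M$ is.

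Next I would relate differentiability of $f$ at $z$ to convergence of the slope martingale $s_f$ along the binary expansion $Z$ of $z$. One direction is immediate: if $f'(z)$ exists then the dyadic slopes $s_f(Z\uhr n)$, being averages of difference quotients over shrinking dyadic intervals containing $z$, converge to $f'(z)$, so $s_f$ converges along $Z$. This already shows: if $z$ is density random, then (by Theorem~\ref{thm:Madison}) every left-c.e.\ martingale converges along $Z$; applying this to the slope martingale does not by itself give differentiability, so here the real work is the other direction of that step: from convergence of \emph{all} left-c.e.\ martingales along $Z$ (plus ML-randomness of $Z$) one must deduce that $f'(z)$ exists, not merely that the dyadic derivative exists. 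The standard device is that the dyadic derivative agreeing with the full derivative at ML-random points is handled by considering the ``shifted'' functions $x\mapsto f(x+a)$ for suitable dyadic rationals $a$ — or, more cleanly, by the pseudo-derivative/porosity argument of Bienvenu--Hölzl et al.\ and of \cite{Brattka.Miller.ea:16}: if the upper and lower dyadic derivatives of $f$ along $Z$ disagree, or if the full derivative fails while the dyadic one exists, one manufactures either an interval-c.e.\ function whose slope martingale diverges along $Z$, or an effectively closed class witnessing that $z$ is not a density-one point.

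For the converse direction of the theorem — if $f'(z)$ exists for every interval-c.e.\ $f$, then $z$ is density random — I would argue contrapositively. If $z$ is not ML-random it is not density random by definition, and one also easily builds an interval-c.e.\ (even computable) function nondifferentiable at $z$ via a left-c.e.\ martingale succeeding on $Z$ turned into monotone oscillation. If $z$ is ML-random but not a density-one point, pick an effectively closed $\+P$ with $\ul\varrho_2(\+P\mid Z) < 1$; then $M(\sss) = 1-\leb_\sss(\+P)$ is a left-c.e.\ martingale, and by Proposition~\ref{prop:PC random upper density} its upper density limit is $1$ while its liminf is $< 1$, so $M$ diverges along $Z$; feeding $M$ through the construction $f \mapsto f_M$ above produces an interval-c.e.\ function whose dyadic slopes oscillate along $Z$, and one promotes this to genuine non-differentiability of $f_M$ at $z$.

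The main obstacle I anticipate is the passage between the \emph{dyadic} notions (slope martingale along $Z$, dyadic density) and the \emph{full} real-variable notion of the derivative $f'(z)$: convergence of $s_f$ along $Z$ only controls averages over the particular nested sequence of dyadic intervals $[Z\uhr n]$, whereas $f'(z)$ requires control over \emph{all} intervals shrinking to $z$, from both sides and with arbitrary endpoints. Handling this will require either the ``every ML-random dyadic density-one point is a full density-one point'' transfer theorem (Theorem~\ref{th:ML-dyadic-full}) applied to the sublevel classes $\{z : \ul\varrho(\+C\mid z)\le q\}$, or a direct porosity argument showing that the set of reals where the dyadic and full derivatives of an interval-c.e.\ function disagree is contained in a suitable effectively null class — and then checking that class is captured by density randomness. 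I expect the bookkeeping for the non-differentiability constructions (ensuring the built function is genuinely interval-c.e., with uniformly left-c.e.\ increments, and that the oscillation of dyadic slopes forces $\liminf$ and $\limsup$ of true difference quotients to differ) to be the most technical part of the write-up.
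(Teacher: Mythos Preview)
Your overall framework is correct and matches the paper's. The backward direction ($\Leftarrow$) goes via Theorem~\ref{thm:Madison}, turning a divergent left-c.e.\ martingale $M$ into the cumulative distribution function of its associated measure, which is interval-c.e.\ and whose dyadic slopes diverge along $Z$; since existence of $f'(z)$ would sandwich the dyadic pseudo-derivatives between $\ul D f(z)$ and $\ol D f(z)$ (Remark~\ref{rem:BraBra}), $f'(z)$ fails. This is exactly what the paper does.

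For the forward direction ($\Rightarrow$), you correctly isolate the main obstacle --- upgrading convergence of the dyadic slope martingale along $Z$ to existence of the genuine derivative $f'(z)$ --- but your proposed remedies have a real gap. Shifting by \emph{dyadic} rationals $a$ cannot help: the family of basic dyadic intervals is invariant under such shifts (up to refinement), so the slope martingale of $x \mapsto f(x+a)$ carries no information beyond that of $s_f$ itself. The porosity idea is in the right direction, and the paper indeed uses it (Proposition~\ref{pro:interval c.e.}) to obtain $\widetilde D_2 f(z) = \widetilde D f(z)$ at non-porosity points. But that argument handles only the \emph{upper} derivative. The essential ingredient your plan is missing is the shift by the \emph{non-dyadic} rational $1/3$: the paper works simultaneously with the dyadic grid and its translate by $1/3$, exploiting the Morayne--Solecki geometric fact (Lemma~\ref{fact:geom}) that endpoints of the two grids at scale $2^{-m}$ are separated by at least $1/(3\cdot 2^m)$. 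This guarantees that any short interval $I \ni z$ sits well inside a single cell of \emph{one} of the two grids at a comparable scale, which is precisely what allows a porosity argument run in parallel at $z$ and at $z-1/3$ to control the \emph{lower} derivative (Subsection~\ref{ss:lower derivative}). Without this device your plan does not close: nothing in it rules out $\utilde D f(z) < \utilde D_2 f(z)$, and your alternative suggestion of applying Theorem~\ref{th:ML-dyadic-full} to sublevel classes does not directly address slopes of $f$ over non-dyadic intervals.
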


\lapf  If $z$ is not density random then  by Theorem~\ref{thm:Madison} a  left-c.e.\ martingale $M$ diverges along the binary expansion of $z$. Let $\mu_M$ be the  measure on $[0,1]$ corresponding to $M$, which is given by $\mu [\sss] = \tp{-\sssl} M(\sss)$, and let  $ \cdf_M(x) = \mu_M[0,x)$. Then $\cdf_M$ is interval-c.e.\ and $(\cdf_M)'(z)$ fails to exist.

\vsps
The rest of this section is devoted to proving the implication $\RA$.
This combines       purely analytical arguments with effectiveness considerations.

\subsection{Slopes and martingales}  
\label{ss:slopes_marti} First we need notation and  a few definitions, mostly taken  from   \cite{Brattka.Miller.ea:16} or  \cite{Bienvenu.Hoelzl.ea:12a}.  For a function~$f\colon \, [0,1] \to\R$, the \emph{slope} at a pair $a,b$ of distinct reals in its domain is
\[
S_f(a,b) = \frac{f(a)-f(b)}{a-b}.
\]
For a nontrivial  interval $A$ with endpoints $a,b$, we also write $S_f(A)$ instead of $S_f(a,b)$.

We let $\sss, \tau$ range over (binary) strings.  For such a string  $\sss$,  by $[\sss]$ we denote the closed basic dyadic interval $[0.\sss, 0.\sss + \tp{-\sssl}]$.  The corresponding open basic dyadic interval is denoted $(\sss)$. 


\vsps
\n {\it Derivatives.} If $z$ is in an open neighborhood of the domain of~$f$, the \emph{upper} and \emph{lower derivatives} \label{def_upper_lower_deriv} of $f$ at $z$ are
\[
\ol D f(z)  =  \limsup_{h\ria 0} S_f(z, z+h) \quad  \textnormal{and}    \quad
\underline D f(z)  =  \liminf_{h\ria 0} S_f(z, z+h),
\]
where  $h$ ranges over reals. The derivative $f'(z)$ exists if and only if these values coincide and  are finite.

We will  also consider the upper and lower \emph{pseudo}-derivatives   defined by:   
\begin{align*}
\widetilde Df(x) &= \limsup_{h \to 0^+} \,  \{S_f(a,b)     \mid    \, a\le x \le b \lland\, 0 <  b-a\le h\} , \\
\utilde Df(x) &= \liminf_{h \to 0^+} \, \{S_f(a,b)  \mid   \, a\le x \le b \lland\, 0 <  b-a\le h\}, 
\end{align*}
where $a,b$ range over rationals in $[0,1]$.  We   use them because in our arguments it is often convenient  to consider rational intervals containing~$x$, rather than intervals that have  $x$ as an endpoint. 
 

\begin{remark} \label{rem:BraBra} Brattka et al.\ \cite[after Fact 2.4 ]{Brattka.Miller.ea:16}  verified that \bc  $\ul Df(z) \le \utilde Df(z) \le \widetilde Df(z) \le \ol Df(z)$ \ec for any real~$z\in [0,1]$. 
   To show  $\widetilde  Df(z) \le \ol  Df(z) $, given any real $z$ and rationals $a \le z \le b$ with $a<b$, we have \bc $S_f(a,b)=\frac{b-z}{b-a} S_f(b,z)+\frac{z-a}{b-a} S_f(z,a)\le  \ol  Df(z)$. \ec The   inequality $\ul Df(z) \le \utilde Df(z) $  can be shown in a  similar way. 
   
    If $f$ is nondecreasing one can in fact verify equality, so   the lower and upper pseudo-derivatives of $f$ coincide with the usual lower and upper derivatives. 
   \end{remark}
We will use the subscript $2$ to indicate that all the limit operations  are restricted to the case of  basic dyadic intervals containing   $z$. Thus, 
\begin{eqnarray*}  \widetilde D_2f(x)  & =&  \limsup_{|A| \to 0} \,  \{S_f(A)     \mid    \, x \in A \lland A \text{ is a basic dyadic interval}\},  \\
\utilde D_2f(x)  & =&  \liminf_{|A| \to 0} \,  \{S_f(A)     \mid    \, x \in A \lland A \text{ is a basic dyadic interval}\}.  
 \end{eqnarray*}

\subsection{Porosity and  upper   derivatives}  \label{ss:porous upper}
  We say that a set $\sC\sub\mathbb R$ is \emph{porous at} $z$ via  the constant $\varepsilon >0$ if   there exist  arbitrarily small $\beta>0$  such that $(z-\beta, z+ \beta)$ contains an open interval of length $\varepsilon\beta $ that is disjoint from $\sC$. We say that $\sC$ is \emph{porous at} $z$ if it is porous  at $z$ via some~$\varepsilon>0$. This notion  originated in the work of Denjoy. See for instance \cite[5.8.124]{Bogachev.vol1:07} (but note the typo in the definition there). 

\begin{definition}[\cite{Bienvenu.Hoelzl.ea:12a}]
We call $z$ a \emph{porosity point} if some  effectively closed class to which it belongs is porous at $z$. Otherwise, $z$ is  a \emph{non-porosity point}.
\end{definition}

Clearly, if  $\+C$ is porous at $z$ then  $\ul\varrho(\sC | z)<1$, so $z$ is not   a density-one point.   The converse fails: every Turing incomplete \ML\ random real is    a non-porosity point by~\cite{Bienvenu.Hoelzl.ea:12a}. By \cite{Day.Miller:15} there is such a real such that $\ul\varrho(\sC | z)<1$ for some $\PPI$ class $\+ C$.  We also note that  it is unknown whether a Turing complete \ML\ random real can be a non-porosity point. If not, then the sets of  positive density and non-porosity ML-random reals  coincide.

 We show that  if the  dyadic and full upper/lower derivatives at $z$ are different, then some closed set is porous at $z$. This  extends the  idea in the proof of  Theorem~\ref{th:ML-dyadic-full} due to Khan and Miller. We begin with the easier case of the upper derivative. The other case will be supplied in Subsection~\ref{ss:lower derivative}.
 
 \begin{proposition}\label{pro:interval c.e.}
Let $f \colon \, [0,1] \to \RR$ be interval-c.e. If  $z$ is a   non-porosity point, then  $\widetilde D_2 f(z) = \widetilde Df(z)$ 
\end{proposition}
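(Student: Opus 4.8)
The plan is to exploit the one-sided inequality from Remark~\ref{rem:BraBra}, namely $\widetilde D_2 f(z) \le \widetilde D f(z)$ (which holds trivially since dyadic intervals containing $z$ are a subfamily of all rational intervals containing $z$), and to prove the reverse inequality $\widetilde D f(z) \le \widetilde D_2 f(z)$ using non-porosity. So assume for contradiction that $\widetilde D f(z) > \widetilde D_2 f(z)$, and fix rationals $p < q$ with $\widetilde D_2 f(z) < p < q < \widetilde D f(z)$ (treating $\widetilde D f(z) = \infty$ by taking $q$ arbitrarily large). The hypothesis $\widetilde D f(z) > q$ gives arbitrarily small rational intervals $[a,b] \ni z$ with $S_f(a,b) > q$; the hypothesis $\widetilde D_2 f(z) < p$ gives, roughly, that for all sufficiently short basic dyadic intervals $A \ni z$ we have $S_f(A) < p$ (more precisely $\widetilde D_2 f(z) < p$ means the $\limsup$ over short dyadic intervals is $< p$, so there is a threshold below which $S_f(A) < p$). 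The idea is that a rational interval with large slope cannot be efficiently covered by dyadic intervals of small slope unless a definite fraction of it is ``missing'' — and the missing part will be forced into an effectively closed porous set.

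First I would make the effectively closed set explicit. Since $f$ is interval-c.e., $f(y) - f(x) = \sup_s g_s(x,y)$ for a uniformly computable increasing sequence of rational-valued functions, so the condition ``$S_f(A) \ge p$'' on a basic dyadic interval $A$ is c.e., hence the set of dyadic intervals $A$ with $S_f(A) < p$ — equivalently $f(\text{right end}) - f(\text{left end}) < p |A|$ — is co-c.e.\ (it is the complement of a c.e.\ set of intervals). Let $\mathcal C$ be the set of reals $x$ such that every sufficiently short basic dyadic interval $A \ni x$ satisfies $S_f(A) < p$; one checks this is an effectively closed class (it is a countable intersection, over dyadic scales $n$, of finite unions of those basic dyadic intervals of length $2^{-n}$ whose slope is $< p$, intersected appropriately — the standard bookkeeping for ``eventually'' conditions gives a $\Pi^0_1$ description). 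By the choice of $p > \widetilde D_2 f(z)$ we have $z \in \mathcal C$. Since $z$ is a non-porosity point, $\mathcal C$ is not porous at $z$.

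Next comes the analytic heart: deriving porosity of $\mathcal C$ at $z$ from the existence of rational intervals of large slope through $z$. Take a rational interval $[a,b] \ni z$, as short as we like, with $S_f(a,b) > q$, i.e.\ $f(b) - f(a) > q(b-a)$. Cover $[a,b]$ by its maximal basic dyadic subintervals in the standard way (a disjoint cover by $O(1)$-many dyadic intervals whose lengths are between, say, $(b-a)/4$ and $b-a$; using a finer cover at dyadic scale $2^{-n}$ for $2^{-n} \le (b-a)$ also works). Split these dyadic subintervals into those lying in $\mathcal C$ (slope $< p$ at every short dyadic scale, in particular at their own scale if we go fine enough) and those not. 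Because $f$ is nondecreasing, the total $f$-increment over the ``good'' dyadic pieces is at most $p$ times their combined length, while the full increment exceeds $q(b-a)$; since $p < q$, the ``bad'' dyadic pieces — those not in $\mathcal C$, which in particular are disjoint from $\mathcal C$ — must have combined length at least $\frac{q-p}{\,C - p\,}(b-a)$ for a crude upper bound $C$ on slopes we can allow (or, handling it more carefully, a fixed positive fraction $\delta(p,q) > 0$ of $(b-a)$, using that each bad piece is entirely outside $\mathcal C$). By a pigeonhole on finitely many dyadic pieces, one of them, of length $\gtrsim \delta \cdot (b-a)$, is disjoint from $\mathcal C$ and sits inside $[a,b] \subseteq (z - (b-a), z + (b-a))$. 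Letting $b - a \to 0$ along these intervals produces arbitrarily small $\beta$ with an interval of length $\varepsilon\beta$ inside $(z-\beta,z+\beta)$ disjoint from $\mathcal C$, i.e.\ $\mathcal C$ is porous at $z$ via some fixed $\varepsilon = \varepsilon(p,q) > 0$ — contradicting non-porosity.

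The main obstacle I anticipate is the bookkeeping that makes $\mathcal C$ genuinely $\Pi^0_1$ and simultaneously makes the ``good pieces have slope $< p$'' claim usable in the covering argument: the set $\mathcal C$ is defined by an ``eventually'' quantifier (all \emph{sufficiently short} dyadic intervals have small slope), and a bad dyadic piece in the cover of $[a,b]$ need only fail the slope bound at \emph{some} dyadic scale, not necessarily its own. The clean fix is to run the cover at a very fine dyadic scale $2^{-n}$ (with $2^{-n} \ll b-a$), so that ``good piece'' can be taken to mean ``contained in the scale-$n$ approximation to $\mathcal C$'', i.e.\ $S_f < p$ on that piece \emph{and} on all its dyadic ancestors up to scale $n$; the nondecreasing-slope estimate still applies scale by scale, and a piece not of this form is within distance $2^{-n}$ of the complement of $\mathcal C$ at scale $n$, which is enough porosity in the limit $n \to \infty$. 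A secondary subtlety is the case $\widetilde D f(z) = \infty$ and ensuring the positive porosity constant $\varepsilon$ does not degrade to $0$; choosing $q - p$ bounded below (e.g.\ $q = p+1$) and tracking the elementary inequality $\delta(p,q) \ge (q-p)/(q-p+1)$ or similar handles this uniformly.
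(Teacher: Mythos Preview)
Your overall strategy matches the paper's: assume $\widetilde D_2 f(z) < p < \widetilde Df(z)$, build an effectively closed class $\mathcal C \ni z$ from the condition ``dyadic slopes are $\le p$'', and exhibit porosity using a short interval $I \ni z$ of large slope. Two points in your execution need repair, and the paper's proof shows how to do both cleanly.

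\textbf{The class $\mathcal C$.} As you define it, with an ``every \emph{sufficiently short} dyadic interval'' quantifier, $\mathcal C$ is $\Sigma^0_2$, not $\Pi^0_1$: the inner condition $S_f(A)<p$ is $\Pi^0_1$ (since $S_f(A)$ is left-c.e.), and the outer ``there exists a threshold $n_0$'' makes the whole thing $\Sigma^0_2$. You sense this in your ``main obstacle'' paragraph. The fix is simply to choose the threshold \emph{before} defining $\mathcal C$: pick $\sigma^*\prec Z$ so that $S_f([\sigma])\le p$ for every $\sigma$ with $\sigma^*\preceq\sigma\prec Z$, and set
\[
\mathcal C \;=\; [\sigma^*]\setminus \bigcup\{(\sigma): S_f([\sigma])>p\}.
\]
This is visibly $\Pi^0_1$ (the removed set is $\Sigma^0_1$), contains $z$, and --- crucially --- any dyadic interval with slope $>p$ is \emph{by definition} removed, so it is entirely outside $\mathcal C$. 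This dissolves your bookkeeping worry about bad pieces not being disjoint from $\mathcal C$.

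\textbf{The porosity step.} Your ``combined length $\ge \frac{q-p}{C-p}(b-a)$'' estimate invokes a global slope bound $C$ that an interval-c.e.\ function need not have, and the subsequent pigeonhole is both unjustified and unnecessary. The paper's argument avoids this entirely. Fix $k$ with $p(1+2^{-k+1}) < \widetilde D f(z)$. Given $I\ni z$ with $S_f(I)>p(1+2^{-k+1})$ and $2^{-n}\le |I|<2^{-n+1}$, partition the slightly larger dyadic interval $[a_0,a_r]\supseteq I$ into pieces of length $2^{-n-k}$. Since $a_r-a_0\le(1+2^{-k+1})|I|$ and $f$ is nondecreasing,
\[
S_f(a_0,a_r)\;\ge\;\frac{S_f(I)}{1+2^{-k+1}}\;>\;p,
\]
and $S_f(a_0,a_r)$ is the \emph{average} of the slopes of the pieces, so at least one piece has slope $>p$. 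That single piece is a hole in $\mathcal C$ of length $2^{-n-k}$ inside $[z-2^{-n+2},z+2^{-n+2}]$, giving porosity with constant $2^{-k-2}$. No bound on how large bad slopes can be is needed.
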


\begin{proof} Suppose that  $\widetilde D_2 f(z) < p <  \widetilde Df(z)$ for a rational $p$.  Choose $k\in \NN $ such that  $p(1+\tp{-k+1})<  \widetilde Df(z)$.

Let $\sss^* \prec Z$ be any string such that  $\forall \sss \, [ \sss^* \preceq \sss \prec Z    \RA S_f([\sss]) \le p]$. 
It is sufficient to establish the following.
\begin{claim} The closed set 
\[\+ C = [\sss^*] - \bigcup \{ (\sigma) \mid \,    S_f([\sss]) >p\},\label{eqn: def C porous} \]  
which contains  $z$,    	is porous at $z$. \end{claim}
If $f$ is interval-c.e., the function $\sss \to S_f([\sss])$ is a left-c.e.\ martingale. In particular,  $\+ C$  is effectively closed, and  porous at $z$.

The proof of the  claim is purely analytical, and only uses that $f$ is non-decreasing. We show  that there exist arbitrarily large $n$ such that some basic dyadic interval  $[a, \tilde  a]$     of length $\tp{-n-k}$ is disjoint from $\+ C$, and contained in $[z- \tp{-n+2}, z + \tp{-n+2}]$.  In particular,   we can choose $\tp{-k-2}$  as  a porosity constant.

By choice of $k$  there is  an interval $I \ni z$ of  arbitrarily short  positive length such that $  p(1+\tp{-k+1})< S_f(I) $. Let $n$ be such that $\tp{-n+1} > |I| \ge \tp{-n}$. Let $a_0$ be greatest of the form $\ell \tp{-n-k}$, $\ell \in \ZZ$, such that $a_0 <  \min I$. 
 Let $a_v = a_0 + v \tp{-n-k}$. Let $r$ be least such that $a_r \ge \max I$. 

Since $f$ is nondecreasing and $a_r - a_0 \le |I| + \tp{-n-k+1} \le  (1+ \tp{-k+1}) |I|$, we have 
\[ S_f (I)	 \le S_f(a_0, a_r) (1+ \tp{-k+1}  ),\]
 and therefore $S_f(a_0,a_r)>p$.  Since $S_f(a_0,a_r)$ is the average of the slopes $S_f(a_u,a_{u+1})$ for $u<r$, there is a $u<r$ such that  $$S_f(a_u,a_{u+1})>p.$$ Since $(a_u, a_{u+1}) = (\sss)$ for some string $\sss$, this gives the required `hole' in $\+ C$ which is near $z \in I$ and large on the scale of $I$: in  the   definition of porosity   at the beginning of this subsection, let $\beta = \tp{-n+2}$ and note that  we have   $[a_u, a_{u+1}]  \sub [z- \tp{-n+2}, z + \tp{-n+2}]$ because $z \in I$ and $|I| < \tp{-n+1}$. 
  \end{proof}


 	\subsection{Basic dyadic intervals shifted by $1/3$}
	\label{ss:MS}
		  We  will use a  basic `geometric' fact observed,  for instance,  by   Morayne and Solecki~\cite{Morayne.Solecki:89}. 
	For $m \in \NN$ let $\+D _m $ be the collection of intervals of the form $$[k \tp{-m}, (k+1)\tp{-m}]$$ where $k \in \ZZ$. Let $  \widehat {\+ D}_m$ be the set of  intervals $(1/3)  +I $ where $I \in \+ D_m$. 

	\begin{lemma} \label{fact:geom} Let $m \ge 1$.  If  $I \in \+ D_m$ and $J \in \widehat {\+ D}_m$, then the distance between an  endpoint of $I$ and an endpoint of $J$ is at least $1/(3 \cdot 2^m)$.
	\end{lemma}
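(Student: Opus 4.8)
The plan is to reduce the claim to an elementary statement about the point $1/3$ relative to the grid $\tp{-m}\ZZ$. The endpoints of intervals in $\+ D_m$ are exactly the rationals of the form $k\tp{-m}$ with $k \in \ZZ$, and the endpoints of intervals in $\widehat{\+ D}_m$ are exactly the rationals of the form $(1/3) + \ell\tp{-m}$ with $\ell \in \ZZ$. So the distance between an endpoint of some $I \in \+ D_m$ and an endpoint of some $J \in \widehat{\+ D}_m$ has the form $|k\tp{-m} - (1/3) - \ell\tp{-m}| = |(k-\ell)\tp{-m} - 1/3|$. Writing $j = k - \ell$, which ranges over all of $\ZZ$, it suffices to show that $|j\tp{-m} - 1/3| \ge 1/(3\cdot 2^m)$ for every $j \in \ZZ$, i.e.\ that $1/3$ is never within $1/(3\cdot 2^m)$ of a point of the lattice $\tp{-m}\ZZ$.

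The key step is a divisibility observation. After multiplying through by $3\cdot 2^m$, the inequality $|j\tp{-m} - 1/3| \ge 1/(3\cdot 2^m)$ becomes $|3j - 2^m| \ge 1$, and since $3j - 2^m$ is an integer, this is equivalent to $3j - 2^m \ne 0$, i.e.\ $2^m$ is not divisible by $3$. That is immediate since $3$ is prime and does not divide $2$, hence does not divide $2^m$ for any $m \ge 1$ (in fact for any $m \ge 0$). This gives the bound for the distance between endpoints regarded as points of $\RR$; one should also note that distinct endpoints of a single interval $I$ or $J$ are themselves at distance $\tp{-m} \ge 1/(3\cdot 2^m)$ apart, but the lemma as stated only compares an endpoint of $I$ with an endpoint of $J$, so the computation above is all that is needed.

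There is essentially no obstacle here: the lemma is a clean number-theoretic fact, and the only thing to be careful about is bookkeeping the two families of endpoints correctly and observing that the relevant index $j = k-\ell$ ranges over all integers, so that no special small cases arise. If one wanted the sharper statement that the bound $1/(3\cdot 2^m)$ is attained, one would exhibit $j$ with $3j - 2^m = \pm 1$, which exists because $2^m \equiv (-1)^m \pmod 3$, so $3j - 2^m = \pm 1$ is solvable for every $m$; but attainment is not required for the applications in Subsection~\ref{ss:MS}, so I would simply record the inequality.
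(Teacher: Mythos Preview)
Your proof is correct and follows essentially the same route as the paper: both reduce the distance question to the integer inequality $|3j - 2^m| \ge 1$ (the paper writes it as $|3k - 3p - 2^m| < 1$ leading to $3 \mid 2^m$, a contradiction), and both conclude via the observation that $3$ does not divide $2^m$. Your additional remarks about attainment of the bound and about same-interval endpoints are harmless extras not needed for the lemma.
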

	To see this, assume  that  $|k \tp{-m} - ( p \tp{-m} +1/3) | < 1/(3 \cdot 2^m)$. This yields $|3k-3p-2^m|/ (3 \cdot2^m) < 1/(3 \cdot 2^m)$, and hence $3| 2^m$, a contradiction.

In order to  apply Lemma~\ref{fact:geom},   we may   need values   of nondecreasing functions $f \colon \, [0,1] \to \RR$   at endpoints of any such intervals, which may lie outside $[0,1]$. So  we think of   $f$ as extended to $[-1, 2] $ via $f(x) = f(0)$ for $-1\le x< 0$ and $f(y) = f(1)$ for $1 < y\le 2$.  Being  interval-c.e.\  is  preserved by this because it suffices to determine the values of the function   at rationals.   

\subsection{Porosity and  lower   derivatives} \label{ss:lower derivative}
 We complete   the proof of the implication ``$\RA$'' in Theorem~\ref{thm:interval left-c.e. MG and derivative}.  We  may assume that  $z> 1/2$.  
 Note that  $z-1/3$ is   a ML-random density-one point, hence a dyadic density-one point.  In particular, both  $z$ and $z-1/3$ are    non-porosity points.
Also,   Theorem~\ref{thm:Madison},  all left c.e.\ martingales converge  on the binary expansions of the reals $z$ and  $z-1/3$.

	 Let $M=M_f $ be the   left-c.e.\   martingale given by  $\sss \to  \ S_f([\sss])$. 
Then $M$ converges on $z$  (recall that we write $M(z)$ for the limit).  Thus  $\utilde D_2f(z)= \widetilde D_2f(z) =M(z)$.

Let $\widehat f(x) = f(x+1/3)$, and let $\hat M = M_{\widehat f}$.  Then  $\hat M$ converges on $z-1/3$.
 \begin{claim}  $M(z) = \hat M(z-1/3)$.   \end{claim}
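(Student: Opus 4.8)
The plan is to show both limits equal the common value of the relevant pseudo-derivatives, and that these pseudo-derivatives agree because every sufficiently short interval containing $z$ is captured, up to bounded distortion, by a basic dyadic interval on one of the two scales. Concretely, since $M = M_f$ converges along the binary expansion of $z$, we already know $\utilde D_2 f(z) = \widetilde D_2 f(z) = M(z)$; similarly $\hat M$ converges along the binary expansion of $z - 1/3$, so $\utilde D_2 \widehat f(z-1/3) = \widetilde D_2 \widehat f(z-1/3) = \hat M(z-1/3)$. Translating by $1/3$, a basic dyadic interval $A$ containing $z-1/3$ corresponds to the shifted interval $(1/3) + A \in \widehat{\+ D}_m$ containing $z$, and $S_{\widehat f}(A) = S_f((1/3)+A)$; hence $\hat M(z-1/3)$ is the common limit of $S_f(J)$ over $J \in \widehat{\+ D}_m$ with $z \in J$ as $m \to \infty$. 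So it suffices to prove that the limit of slopes $S_f(I)$ over $I \in \+ D_m$ with $z \in I$ equals the limit of slopes $S_f(J)$ over $J \in \widehat{\+ D}_m$ with $z \in J$.

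The key step is a sandwiching argument using Lemma~\ref{fact:geom} together with monotonicity of $f$. Fix $m$ and let $I_m \in \+ D_m$ be the (essentially unique) dyadic interval of length $\tp{-m}$ containing $z$, and $J_m \in \widehat{\+ D}_m$ the shifted interval containing $z$. By Lemma~\ref{fact:geom}, the endpoints of $I_m$ and $J_m$ are at distance at least $1/(3\cdot 2^m)$ from each other; since both intervals have length $\tp{-m}$ and both contain $z$, this forces $J_m$ to overlap $I_m$ in a sub-interval of length at least $1/(3 \cdot 2^m)$, and more usefully, the union $I_m \cup J_m$ is an interval of length at most $2\cdot\tp{-m}$ that contains, say, $I_{m-1}$ shrunk appropriately — the precise bookkeeping is that $I_m \cup J_m \subseteq K$ for some basic dyadic interval $K \in \+ D_{m-2}$ containing $z$, and conversely $I_m \cup J_m$ contains a basic dyadic interval of $\+ D_{m+2}$ around $z$. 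Using that $f$ is nondecreasing, for any two intervals $A \subseteq B$ with $z$ in both we have $S_f(A) \le S_f(B) \cdot |B|/|A|$ and $S_f(B) \le S_f(A) + S_f(B \setminus A)\cdot|B\setminus A|/|B|$-type estimates; iterating these across the overlapping intervals $I_m$, $J_m$, and comparing each to a common coarser and a common finer basic dyadic interval, one gets
\[
\liminf_{m} S_f(I_m) \le \liminf_m S_f(J_m) \le \limsup_m S_f(J_m) \le \limsup_m S_f(I_m),
\]
and by the symmetric argument with the roles of $\+ D_m$ and $\widehat{\+ D}_m$ exchanged, all four quantities coincide. Since the outer two are both equal to $M(z)$ and the inner two to $\hat M(z-1/3)$, we conclude $M(z) = \hat M(z-1/3)$.

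I expect the main obstacle to be making the geometric comparison fully rigorous: Lemma~\ref{fact:geom} gives a lower bound on endpoint distances, but to run the monotonicity sandwich I need that $I_m$ and $J_m$ genuinely overlap in a fixed proportion of their length and that their union is trapped between basic dyadic intervals on comparable scales, uniformly in $m$. This is elementary but requires care about the cases where $z$ is near an endpoint of $I_m$ (so that $z$ might lie in two adjacent intervals of $\+ D_m$), and about the constants lost when passing to $\+ D_{m\pm 2}$. A cleaner alternative I would try first is to avoid the union entirely: observe that since $f$ is nondecreasing and $z$ is a non-porosity point, $\widetilde D_2 f(z) = \widetilde D f(z)$ by Proposition~\ref{pro:interval c.e.} and dually $\utilde D_2 f(z) = \utilde D f(z)$ (via the lower-derivative analogue to be established in this subsection), and likewise for $\widehat f$ at $z-1/3$; but the full pseudo-derivatives of $f$ at $z$ and of $\widehat f$ at $z - 1/3$ are literally the same limiting quantity since $\widetilde D \widehat f(z - 1/3) = \limsup$ over rational intervals $[a,b]$ with $a \le z-1/3 \le b$ of $S_{\widehat f}(a,b) = S_f(a+1/3, b+1/3)$, which ranges over exactly the rational intervals containing $z$. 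Hence $\widetilde D f(z) = \widetilde D \widehat f(z-1/3)$ and $\utilde D f(z) = \utilde D \widehat f(z - 1/3)$ immediately, giving $M(z) = \widetilde D_2 f(z) = \widetilde D f(z) = \widetilde D \widehat f(z-1/3) = \widetilde D_2 \widehat f(z - 1/3) = \hat M(z-1/3)$. This route makes the geometric lemma do its work through Proposition~\ref{pro:interval c.e.} rather than through an ad hoc computation, so it is the one I would write up.
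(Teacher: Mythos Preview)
Your ``cleaner alternative'' at the end is exactly the paper's approach. The paper phrases it as a contradiction: if $M(z) < \hat M(z-1/3)$ then, since the shifted dyadic intervals are among all intervals containing $z$, one has $\widetilde D_2 f(z) < \widetilde D f(z)$, contradicting Proposition~\ref{pro:interval c.e.} at $z$; the reverse inequality is handled symmetrically using that $z-1/3$ is a non-porosity point. Your direct chain
\[
M(z) = \widetilde D_2 f(z) = \widetilde D f(z) = \widetilde D \widehat f(z-1/3) = \widetilde D_2 \widehat f(z-1/3) = \hat M(z-1/3)
\]
is the same argument without the detour through contradiction. The geometric sandwiching you sketch first is unnecessary and, as you suspect, would be awkward to make precise; discard it. One small caution: do not appeal to the lower-derivative analogue of Proposition~\ref{pro:interval c.e.} here, since the claim is used in the proof of that analogue --- your final chain is fine because it only invokes the upper-derivative version.
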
 
If   $M(z)  < \hat M(z-1/3)$ then $\widetilde D_2 f(z) <  \widetilde Df(z)$.
However, $z$ is a non-porosity point, so this contradicts Proposition~\ref{pro:interval c.e.}. 
	If   $\hat M(z-1/3)  < M(z) $  we argue similarly using that $z-1/3$ is a non-porosity point. This establishes the claim.

  We have already shown that       $\utilde D_2f(z) = \widetilde D_2f(z)= \widetilde Df(z)$, so  to complete the proof of ``$\RA$'' in  Theorem~\ref{thm:interval left-c.e. MG and derivative}, it suffices  to show that 
	$\utilde Df(z)= \utilde D_2f(z)$.   Then, since $f$ is nondecreasing,  $f'(z)$ exists by  Remark~\ref{rem:BraBra}.

Assume for a contradiction that if $\utilde Df(z) <  \utilde D_2f(z)$. We will show that  one of $z$,  $z-1/3$	is a porosity point.   First we define porosity in Cantor space.

\begin{definition} \label{def: dyad porous} For a closed set $\sC \sub \cantor$, we say that  $\sC$ is porous at  $Y \in \sC$ if  there is   $r \in \NN$ as follows: there exists arbitrarily large $m$ such that \bc $ \sC \cap [(Y\uhr m) \ape \tau] = \ES$ for some $\tau$ of length $r$.  \ec  \end{definition} 
\n Clearly this implies that $\sC$ viewed as a subclass of $[0,1]$ is porous at $0.Y$ (now ``holes'' on both sides of $0.Y$ are  allowed). We will actually define $\Pi^0_1$ classes $\+ E$ and $	\widehat {\+ E}$   in Cantor space such that $\+ E$ is porous at   the binary expansion  of  $z$,  or $\widehat {\+ E}$ is porous at   the binary expansion  of  $z-1/3$.	

We employ a  method similar to  the one   in Subsection~\ref{ss:porous upper}, but  now take into account both dyadic intervals, and dyadic intervals shifted by $1/3$ of the same length. Recall that  $\utilde D_2f(z) = M(z)$.  

We can choose rationals $p,q$ such that 
	\[\utilde D f(z) < p < q < M(z) = \hat M(z-1/3).\]
	Let $k\in \NN$ be such that $p< q(1- \tp{-k+1})$. Let $u,v$ be rationals such that 
	\bc $ q< u <  M(z)  <v$ and $v-u\le \tp{-k-3}(u-q)$. \ec
	
	Recalling the notation in Subsection~\ref{ss:MS}, let $n^* \in \NN$ be such that for each $n \ge n^*$ and any interval $A\in \+ D_n \cup  \widehat {\+ D}_n$ containing $z$, we have $S_f(A) \ge u$.
	Let 
	\begin{eqnarray*} \+ E &=&  \{ X \in \cantor \colon \, \fa n \ge n^* M(X\uhr n)\le v \}\\
		\widehat {\+ E} &=&  \{ W \in \cantor \colon \, \fa n \ge n^* \hat M(W\uhr n) \le v \} \end{eqnarray*}
		Since $f$ is interval-c.e.,  $M$ and $\hat M$ are left-c.e.\ martingales, so these classes are effectively closed.

		Let  $Z$ be the bit sequence such that   $z = 0.Z$.   By the choice of $n^*$ we  have  $Z \in \+ E $. Let  $Y$ be   the bit sequence such that $0.Y= z-1/3$. We have   $Y \in  \widehat {\+ E}$.

Consider   an interval  $I \ni z$   of  positive length $\le \tp{-n^*-3}$ such that $S_f(I) \le p$. Let $n$ be such that $\tp{-n+1} > |I| \ge \tp{-n}$. Let $a_0$  be least of the form $w \tp{-n-k}$  where $w \in \ZZ$, such that $a_0  \ge  \min (I)$.   Similarly, let 
 $b_0$  be least of the form $w \tp{-n-k}+1/3 $  such that $b_0  \ge  \min (I)$. 
	 Let \bc  $a_i = a_0 + i \tp{-n-k}$ and $b_j = b_0 + j \tp{-n-k}$. \ec Let $r,s$ be greatest  such that $a_r \le \max (I)$ and $b_s \le \max(I)$. 

Since $f$ is nondecreasing and \bc  $a_r - a_0 \ge  |I| - \tp{-n-k+1} \ge  (1- \tp{-k+1}) |I|$,  \ec we have 
$S_f (I)	 \ge S_f(a_0, a_r) (1- \tp{-k+1}  )$,
	 and therefore $S_f(a_0,a_r)< q$. Then there is an $i<r$ such that  $S_f(a_i,a_{i+1})< q$. Similarly, there is $j< s$ such that $S_f(b_j,b_{j+1})< q$.
	
	\begin{claim}
	One of the following is true.

 {\rm (i)} $z, a_i,a_{i+1} $ are   all  contained in a single interval   from $\+  D_{n-3}$. 
	
 {\rm (ii)} $z, b_j,b_{j+1} $   are  all  contained in a single   interval   from $\widehat {\+ D}_{n-3}$. 	\end{claim}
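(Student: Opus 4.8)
The claim is a purely geometric fact, and the plan is to derive it directly from Lemma~\ref{fact:geom}, using that $I$ is much shorter than a cell of $\+ D_{n-3}$.

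First I would note that $z,a_i,a_{i+1}$ all lie in $I$: since $a_0\ge\min(I)$, $a_r\le\max(I)$ and the $a_v$ increase, we have $a_i,a_{i+1}\in[\min(I),\max(I)]\subseteq I$, and $z\in I$ by hypothesis; the same reasoning gives $b_j,b_{j+1}\in I$. Hence the closed interval spanned by $\{z,a_i,a_{i+1}\}$, and likewise the one spanned by $\{z,b_j,b_{j+1}\}$, is contained in the interval $I$ and has length at most $|I|<2^{-n+1}$, which is strictly smaller than the mesh $2^{-(n-3)}$ of $\+ D_{n-3}$ (and of $\widehat{\+ D}_{n-3}$).

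Next I would invoke the elementary dichotomy: a closed interval $[c,d]$ with $d-c<2^{-m}$ is either contained in a single cell $[\ell 2^{-m},(\ell+1)2^{-m}]$ of $\+ D_m$, or contains a point of the form $\ell 2^{-m}$ in its interior --- take $\ell=\lfloor c\,2^m\rfloor$ and split on whether $d\le(\ell+1)2^{-m}$ --- and the analogue for $\widehat{\+ D}_m$ with the grid points $\ell 2^{-m}+1/3$. Applying this with $m=n-3$: if (i) fails, then the interval spanned by $\{z,a_i,a_{i+1}\}$, hence $I$, contains an endpoint $g_1$ of some member of $\+ D_{n-3}$; and if (ii) fails, then $I$ contains an endpoint $g_2$ of some member of $\widehat{\+ D}_{n-3}$.

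Finally I would close the argument by assuming both (i) and (ii) fail. Then $g_1,g_2\in I$, so $|g_1-g_2|\le|I|<2^{-n+1}=2\cdot 2^{-n}$; on the other hand Lemma~\ref{fact:geom} with $m=n-3\ge 1$ (we may assume $n^*\ge 1$, and then $|I|\le 2^{-n^*-3}$ forces $n\ge 4$) gives $|g_1-g_2|\ge 1/(3\cdot 2^{n-3})=8/(3\cdot 2^n)>2\cdot 2^{-n}$, a contradiction. Hence at least one of (i), (ii) holds. I do not expect a serious obstacle here; the only point to watch is the numerical margin $8/3>2$, which is exactly what lets the guaranteed spacing of Lemma~\ref{fact:geom} beat the length of $I$, together with the harmless reduction to closed subintervals so that the interior-grid-point dichotomy applies cleanly.
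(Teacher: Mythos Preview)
Your argument is correct and follows essentially the same route as the paper's proof: both observe that $z,a_i,a_{i+1},b_j,b_{j+1}\in I$, and both use Lemma~\ref{fact:geom} at level $m=n-3$ together with $|I|<2^{-n+1}<\tfrac{1}{3}\cdot 2^{-(n-3)}$ to conclude that $I$ cannot simultaneously meet the $\+ D_{n-3}$ grid and the $\widehat{\+ D}_{n-3}$ grid. The only cosmetic difference is that the paper argues directly (``if (i) fails then no $\widehat{\+ D}_{n-3}$ endpoint lies in $I$, hence (ii) holds''), whereas you phrase it as a contradiction from assuming both (i) and (ii) fail; your explicit numerical check $8/3>2$ and the spelled-out dichotomy make the margin clearer but add nothing new.
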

For suppose that (i) fails. Then there  is  an  endpoint of an interval $A\in \+ D_{n-3}$ (that is, a number of the form $w\tp{-n+3}$ with $w\in \ZZ$) between $\min (z, a_i) $ and $\max (z, a_{i+1})$. Note that $\min (z, a_i) $ and $\max (z, a_{i+1})$ are  in $I$. By Fact~\ref{fact:geom} and since  $|I| < \tp{-n+1}$,  there can be no endpoint of an interval $\hat A \in \widehat {\+ D}_{n-3}$ in $I$. Then, since $b_j, b_{j+1} \in I $,  (ii) holds. This establishes  the claim.

Suppose $I$ is an interval as above and  $\tp{-n+1} > |I| \ge \tp{-n}$, where $n \ge n^*+3$. Let $\eta = Z \uhr {n-3}$ and $\hat \eta = Y \uhr {n-3}$. 
	
	If (i) holds for this $I$ then there is  a string $\alpha$ of length $k+3$ (where $[\eta \alpha]=[a_i, a_{i+1}]$) such that $M( \eta \alpha) < q$. So by the choice of $q< u< v$ and since $M(\eta) \ge u$  there is $\beta$  of length $k+3$  such that $M(\eta \beta)> v$. (The decrease along $\eta \aaa$ of the martingale $M$ must be balanced by an increase along  some $\eta \beta$.) This yields  a   ``hole'' in $\+ E$,  large and near $Z$ on the scale of $I$, as required  for the porosity of $\+ E$ at $Z$; in the notation of the Definition~\ref{def: dyad porous}  above,  $\+ E$ is porous at $Z$ via $m = |\eta|$ and $r = k+3$.

		Similarly, if (ii) holds for this $I$, then there is a string $\alpha$ of length $k+3$ (where $[\hat \eta \alpha]=[b_j, b_{j+1}]$) such that $M(\hat \eta \alpha) < q$. So by the choice of $q< u< v$ and since $\hat M(\hat \eta)\ge u$,  there is a string  $\beta$  of length $k+3$  such that $\hat M(\hat \eta \beta)> v$.  This yields  a   hole in  $\widehat {\+ E}$,  large and near $Y$ on the scale of $I$, as  required  for the porosity of $\widehat {\+ E}$ at $Y$. 

 Thus, if case (i) applies for arbitrarily short intervals $I$, then $\+ E$ is porous at $Z$, whence $z$ is a porosity point. Otherwise (ii) applies   for intervals below a certain length. Then   $\widehat {\+ E}$ is porous at $Y$, whence $z-1/3$ is a porosity point.  Both cases are contradictory.  This concludes the proof of Theorem~\ref{thm:interval left-c.e. MG and derivative}.

Nies~\cite{Nies:14}  also uses porosity for an  effective version of Lebesgue's theorem~\ref{thm:Leb} in the setting of polynomial time computable functions and martingales.  The proof can be easily adapted to the original setting of computable functions and martingales, thereby providing a simpler proof of the main result in Brattka et al., Theorem~\ref{thm: comp dyadic diff}.


\section{Lebesgue differentiation  theorem}
This section is centred around  an effective version, in the c.e.\ setting, of another result obtained by Lebesgue  in 1904  \cite{Lebesgue:1904}. 

\begin{definition} \label{Leb point}
Given an   integrable non-negative function $g$ on $[0,1]$, a point $z$ in the domain of $g$ is called a \emph{weak Lebesgue point} of $g$ if
\bc $ \lim_{Q  \to z}  \frac{1}{\lambda(Q)}\int_{Q} g$  \ec  exists, where $Q$ ranges over open intervals  containing $z$ with length  $\lambda(Q)$ tending to $0$;
 $z$ is called a \emph{Lebesgue point}  of $g$  if this value equals $g(z)$.    \end{definition}
 
 We note that also a variant of this definition can   be found  in the literature, where  $Q$ is centred at $z$. This is in fact equivalent to the definition given here; see for instance \cite[Thm.\ 7.10]{Rudin:87}
 \begin{thm}[Lebesgue \cite{Lebesgue:1904}] Suppose $g$ is an integrable function on $[0,1]$. Then almost every $z\in [0,1]$ is a Lebesgue point of $g$.\end{thm}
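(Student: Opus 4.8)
The plan is to prove the logically equivalent statement that almost every $z \in [0,1]$ lies in the \emph{Lebesgue set} of $g$, namely the set of points $z$ for which
\[ \lim_{\lambda(Q)\to 0,\ z\in Q}\ \frac{1}{\lambda(Q)}\int_Q |g(y)-g(z)|\,dy \;=\; 0, \]
where $Q$ ranges over open intervals containing $z$. Every such $z$ is in particular a Lebesgue point in the sense of Definition~\ref{Leb point}, since $\bigl|\frac{1}{\lambda(Q)}\int_Q g - g(z)\bigr| \le \frac{1}{\lambda(Q)}\int_Q|g(y)-g(z)|\,dy$. Decomposing $g = g^+ - g^-$ we may assume $g \ge 0$. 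For a rational $\alpha > 0$ let $E_\alpha$ be the set of $z$ at which the $\limsup$ of the displayed averages exceeds $\alpha$; it suffices to show $\lambda(E_\alpha) = 0$ for every such $\alpha$, since the complement of the Lebesgue set is $\bigcup_\alpha E_\alpha$.

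The main tool is the Hardy--Littlewood maximal operator $Mh(z) = \sup_{z\in Q}\frac{1}{\lambda(Q)}\int_Q |h|$, together with its weak-type $(1,1)$ estimate $\lambda\{z : Mh(z) > \beta\} \le \frac{C}{\beta}\|h\|_1$ for a universal constant $C$. I would derive this from a Vitali-type covering lemma: any finite family of open intervals admits a pairwise disjoint subfamily whose union has measure at least a fixed fraction of the union of the whole family. The second ingredient is the density of $C[0,1]$ in $L^1[0,1]$: given $\varepsilon > 0$, choose a continuous $h$ with $\|g - h\|_1 < \varepsilon$. For the continuous function $h$, uniform continuity gives that the displayed limit is $0$ at \emph{every} point; hence, by the triangle inequality applied under the integral, the $\limsup$ defining $E_\alpha$ for $g$ is bounded pointwise by $M(g-h)(z) + |g(z) - h(z)|$.

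Combining the two ingredients finishes the argument: $E_\alpha \subseteq \{z : M(g-h)(z) > \alpha/2\} \cup \{z : |g(z)-h(z)| > \alpha/2\}$, and by the maximal inequality together with Chebyshev's inequality the measure of the right-hand side is at most $\frac{2C}{\alpha}\|g-h\|_1 + \frac{2}{\alpha}\|g-h\|_1 < \frac{(2C+2)\varepsilon}{\alpha}$. Since $\varepsilon$ was arbitrary, $\lambda(E_\alpha) = 0$, and letting $\alpha$ run through the positive rationals shows that the Lebesgue set of $g$ is conull.

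The step I expect to be the real obstacle is the weak-type maximal inequality, i.e.\ the covering argument; the rest is routine bookkeeping once it is available. For a one-dimensional shortcut fitting the analytic background of this paper, one may instead apply Lebesgue's Theorem~\ref{thm:Leb} to the indefinite integral $F(x) = \int_0^x g$, which is nondecreasing and hence differentiable a.e.; identifying $F'$ with $g$ almost everywhere then recovers the weak-Lebesgue-point formulation, though this identification is itself of comparable difficulty.
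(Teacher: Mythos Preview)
Your argument is the standard, correct proof of the Lebesgue differentiation theorem via the Hardy--Littlewood maximal inequality and $L^1$-density of continuous functions; the reduction to $g\ge 0$ is unnecessary (the maximal-function estimate works for arbitrary $L^1$ functions) but harmless.

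The paper, however, does not prove this theorem at all. It is stated as classical background with a citation to Lebesgue~\cite{Lebesgue:1904}, followed only by the remark that it is equivalent to the a.e.\ differentiability of the indefinite integral $f(z)=\int_{[0,z]} g\,d\lambda$ with $f'(z)=g(z)$ --- exactly the ``shortcut'' you mention at the end. The paper's interest is in \emph{effective} versions of the theorem (for $L_1$-computable or lower semicomputable $g$), not in reproving the classical statement, so there is nothing to compare your proof against.
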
 

Equivalently,  the function $f(z) = \int_{[0,z]} g d\lambda$ is differentiable  at almost every $z$, and  $f'(z) = g(z)$.   

Several years later, Lebesgue   \cite{Lebesgue:1910}  extended this result to higher dimensions; the variable $Q$ now ranges over open cubes containing~$z$.

\subsection{Effective Lebesgue Differentiation Theorem via $L_1$-computability}

 \label{ss:Leb-L1-comp}

Pathak, Rojas and Simpson~\cite[Theorem 3.15]{Pathak.Rojas.ea:12} studied an effective version  of Lebesgue's theorem, where the given function is $L_1$-computable, as defined in \cite{Pour-El.Richards:89} (or see~\cite[Def.\ 2.6]{Pathak.Rojas.ea:12}). They  showed that  

\vsps
\n $z$ is Schnorr random  $\LR$  

\hfill $z$ is a weak Lebesgue point of each $L_1$-computable function. 

\vsps

\n The   implication   ``$\RA$'' was independently obtained in  \cite[Thm.\ 5.1]{Freer.Kjos.ea:14}.  Using this result, we observe that 
if  a $\PPI$ class has computable measure,    it has density $1$ at every Schnorr random member.  
\begin{proposition} Let $\+ P \sub [0,1]$ be an  effectively closed set such that  $\leb \+ P$ is computable. Let  $z \in \+ P$ be   a Schnorr  random real. Then $ \ul  \varrho(\+ P \mid z) =1$. \end{proposition}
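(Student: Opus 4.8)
The plan is to apply the Pathak--Rojas--Simpson equivalence to the indicator function $g = \mathbf{1}_{\+ P}$, obtaining that the density of $\+ P$ at $z$ exists, and then to pin this density down to the value $1$ using the upper-density bound from Proposition~\ref{prop:PC random upper density}(ii). First I would check that $g$ is $L_1$-computable in the sense of \cite{Pour-El.Richards:89,Pathak.Rojas.ea:12}. Write $\+ U = [0,1] \setminus \+ P$; since $\+ P$ is effectively closed, $\+ U = \bigcup_n I_n$ for a computable sequence of rational open intervals, and $\leb \+ U = 1 - \leb \+ P$ is a computable real. The partial unions $V_l = \bigcup_{n < l} I_n$ have computable rational measure, nondecreasing in $l$ with supremum $\leb \+ U$, so for each $k$ one can effectively find $l_k$ with $\leb(\+ U \setminus V_{l_k}) < 2^{-k}$. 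Then $\mathbf{1}_{V_{l_k}}$ is a rational step function, computable uniformly in $k$, with $\|\mathbf{1}_{\+ U} - \mathbf{1}_{V_{l_k}}\|_1 < 2^{-k}$; hence $\mathbf{1}_{\+ U}$, and therefore $g = 1 - \mathbf{1}_{\+ U}$, is $L_1$-computable. This is the one place where computability of $\leb \+ P$ enters.

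Since $z$ is Schnorr random, the Pathak--Rojas--Simpson theorem gives that $z$ is a weak Lebesgue point of $g$; by Definition~\ref{Leb point} this means precisely that the limit $d := \lim_{Q \to z} \frac{\lambda(Q \cap \+ P)}{\lambda(Q)}$ exists, where $Q$ ranges over open intervals containing $z$ with $\lambda(Q) \to 0$. Restricting this limit to the subfamily of basic dyadic intervals $(\sss)$ with $z \in (\sss)$ (whose lengths tend to $0$) yields $\ul\varrho_2(\+ P \mid z) = \ol\varrho_2(\+ P \mid z) = d$. Now $z$, being Schnorr random, is in particular Kurtz random, so Proposition~\ref{prop:PC random upper density}(ii) applies (using again that $\leb \+ P$ is computable) and gives $\ol\varrho_2(\+ P \mid z) = 1$; hence $d = 1$. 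Finally, the closed intervals $[z - \gamma, z + \delta]$ with $\gamma, \delta \to 0^+$ have the same measure as the open intervals $(z-\gamma, z+\delta)$ occurring in the limit defining $d$, so $\ul\varrho(\+ P \mid z) = \liminf_{\gamma,\delta \to 0^+} \frac{\lambda([z-\gamma,z+\delta] \cap \+ P)}{\gamma + \delta} = d = 1$, as required.

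The argument has no single hard step, but the conceptual point worth highlighting is that a weak Lebesgue point only delivers the \emph{existence} of the density limit $d$; identifying $d$ with $1$ genuinely needs the separately established fact $\ol\varrho_2(\+ P \mid z) = 1$, which in turn relies on Kurtz randomness together with the computability of $\leb \+ P$. The main technical care goes into the $L_1$-computability verification in the first paragraph, where the hypothesis that $\leb \+ P$ is computable is exactly what lets us locate the cutoffs $l_k$: a merely effectively closed $\+ P$ of non-computable measure would only furnish a one-sided, lower-semicomputable $L_1$-approximation of $\mathbf{1}_{\+ P}$, to which the Schnorr-randomness characterization does not apply.
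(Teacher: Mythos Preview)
Your proof is correct and follows essentially the same approach as the paper: verify that $\mathbf{1}_{\+ P}$ is $L_1$-computable using the computability of $\leb \+ P$, invoke the Pathak--Rojas--Simpson result to get that the density of $\+ P$ at $z$ exists, and then use Proposition~\ref{prop:PC random upper density}(ii) (via Kurtz randomness) to identify that density as~$1$. The only cosmetic difference is that the paper approximates $\+ P$ from outside by clopen supersets $P_{g(n)}$, whereas you approximate the complement $\+ U$ from inside by finite unions of rational intervals; these are dual formulations of the same $L_1$-approximation.
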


\begin{proof}  
 Let $P = \bigcap_s P_s$ for  a computable sequence $\seq{P_s}$  of finite unions of closed intervals.   There is a computable function  $g$  such that $\leb (P_{g(n)}-P) \le \tp{-n}$. Hence the characteristic function $1_P$ is $L_1$-computable. Now  by \cite[Theorem 3.15]{Pathak.Rojas.ea:12} or  \cite[Thm.\ 5.1]{Freer.Kjos.ea:14}, the density of $\+ P$ at $z$ exists, that is $\ul \varrho(P\mid z) = \ol \varrho(P\mid z)$. 
 
 The binary expansion $Z$ of the real $z$ is Kurtz random, so by Proposition~\ref{prop:PC random upper density}(ii) we have $\ol \varrho_2(P\mid Z)=1$. Therefore $\ul \varrho(P\mid z) =1$.
\end{proof}

\subsection{Dyadic Lebesgue points  and  integral tests}
Recall that an open    {basic dyadic interval} in $[0,1]$ has the form $(i\tp{-n}, (i+1)\tp{-n})$ where $i < \tp{n}$. If  a string $\sss$ of length $n$  is the binary expansion of $i$,   we also write $(\sss)$ for this interval. 
We say that $z$ is a (weak) \emph{dyadic Lebesgue point}
if the limit in Definition~\ref{Leb point} exists when~$Q$ is restricted to  open basic dyadic intervals, 
 
As usual  let $ \ol { \mathbb{R}}=\mathbb R \cup \{-\infty, \infty\}$. 
For a function $f:  [0,1]\to \ol { \mathbb{R}}$ and $z\in[0,1]$,
let 
\[E(f,\sigma)=\frac{\int_{(\sigma)}f\ d\leb}{2^{-n}}.\]
Then, $z$ is a dyadic Lebesgue point
iff $\lim_n E(f,Z\uhr n)=f(z)$ where $z= 0.Z$.

Recall from the introduction that a function $g:[0,1]\to \real \cup \{\infty\}$ is lower semi-computable if $f^{-1}(\{z \colon \, z >  q \})$ is effectively open, uniformly in a rational~$q$.   (This is an  effective version of   lower semicontinuity.)
It is  well-known that such functions can be used to characterise \ML\ randomness; see for instance  Li and  Vit\'anyi \cite[Subsection 4.5.6]{Li.Vitanyi:93}.

\begin{definition}  An \emph{integral test}  is a non-negative   lower semi-computable  function $g: [0,1]\to \ol {\real}$ such that $\int  g\ d\leb <\infty$.  \end{definition}
 \begin{theorem}[Levin] \label{MLIntegral}
 A real $z$ is Martin-Löf random if and only if  
 
 \hfill $g(z)<\infty$ for each integral test $g$. 
\end{theorem}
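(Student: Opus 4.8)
The plan is to prove both directions using the correspondence between open sets described by c.e.\ unions of basic dyadic intervals and lower semicomputable functions. For the direction ``$z$ ML-random $\RA$ $g(z)<\infty$ for each integral test $g$'': suppose $g$ is an integral test with $\int g\,d\leb = c < \infty$. For each $k \in \NN$ consider the set $V_k = \{x : g(x) > \tp k\}$. Since $g$ is lower semicomputable, $V_k$ is effectively open, uniformly in $k$, and by Markov's inequality $\leb(V_k) \le c\tp{-k}$. Rescaling indices (replace $k$ by $k + \lceil \log_2 c\rceil + 1$, say), we obtain a uniformly c.e.\ sequence of open sets of measure at most $\tp{-k}$, i.e.\ a Martin-L\"of test. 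If $z$ were such that $g(z) = \infty$, then $z \in V_k$ for every $k$, so $z$ is captured by this test, contradicting ML-randomness. Hence $g(z) < \infty$.

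For the converse ``$g(z)<\infty$ for each integral test $g$ $\RA$ $z$ ML-random'', I would argue contrapositively: given a ML-test $\seq{\+ U_k}\sN k$ with $\leb \+ U_k \le \tp{-k}$ capturing $z$ (so $z \in \bigcap_k \+ U_k$), build a single integral test $g$ with $g(z) = \infty$. The natural candidate is $g = \sum_{k} 1_{\+ U_k}$, or more carefully $g = \sum_k 1_{\+ U_{2k}}$ to control the integral: then $\int g\,d\leb \le \sum_k \leb(\+ U_{2k}) \le \sum_k \tp{-2k} < \infty$. Each $1_{\+ U_j}$ is lower semicomputable (since $\+ U_j$ is effectively open), and a sum of uniformly lower semicomputable non-negative functions is lower semicomputable; the partial sums increase to $g$, and $\{x : g(x) > q\}$ is the union over finite partial sums of the corresponding effectively open sets, hence effectively open uniformly in $q$. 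Since $z \in \+ U_j$ for every $j$, in particular for all even $j$, each summand contributes $1$ at $z$, so $g(z) = \infty$. This gives an integral test infinite at $z$, as required.

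The main technical point — and the step I expect to require the most care — is verifying that the constructed $g$ is genuinely lower semicomputable as a function $[0,1] \to \ol{\real}$, i.e.\ that $\{x : g(x) > q\}$ is $\SI 1$ \emph{uniformly} in the rational $q$. The subtlety is that $g$ is an infinite sum, so membership $g(x) > q$ is witnessed by some finite partial sum $g_m(x) = \sum_{k \le m} 1_{\+ U_{2k}}(x)$ exceeding $q$; one needs that $\bigcup_m \{x : g_m(x) > q\}$ is effectively open uniformly in $q$, which follows because each $g_m$ is a finite sum of indicator functions of effectively open sets (hence $\{g_m > q\}$ is a finite Boolean combination — actually just a union of intersections — of the $\+ U_{2k}$, effectively open uniformly in $m$ and $q$) and a uniform union of uniformly effectively open sets is effectively open. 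A secondary point is bookkeeping with the rescaling of test levels in the forward direction so that $\leb(V_k) \le \tp{-k}$ holds exactly; this is routine given $\int g\,d\leb < \infty$. One should also note that the statement and proof are insensitive to whether one works with $\ol{\real}$-valued or $(\real \cup \{\infty\})$-valued functions, since integral tests are non-negative by definition.
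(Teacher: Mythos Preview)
Your proof is correct and is the standard argument for Levin's characterisation. Note, however, that the paper does not actually prove this theorem: it is stated as a known result attributed to Levin, with a reference to Li and Vit\'anyi \cite[Subsection 4.5.6]{Li.Vitanyi:93}, and is used as background without proof. Your argument via Markov's inequality in one direction and summing indicator functions of the test components in the other is exactly the textbook proof one finds in the cited reference; the only minor remark is that already $g=\sum_k \mathbf{1}_{\+ U_k}$ suffices for the converse, since $\sum_k \tp{-k}<\infty$, so the passage to even indices is unnecessary (though harmless).
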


Note that 
if   $f$ is an integral test,   the function $\sss \mapsto E(f,\sss)$ is a left-c.e.\ martingale. Since $f$ is integrable,  $f^{-1} ( \{\infty\})$ is  a null set.   

In  Definition \ref{Leb point} of [weak] Lebesgue points, we   allow  functions $g$  that can take  the value $\infty$.  For $z$ to be a (weak) Lebesgue point, the limit as the intervals  approach $z$ is required to be   finite.  First we show   that  for  an integral test $g$, the dyadic versions of the  weak and strong conditions in Def.\ \ref{Leb point} coincide at   a ML-random real $z$.
\begin{lemma}\label{lem:weak-l}
Let $g$ be an  integral test, and let $z$ be a  \ML \ random real. If $z$  is a dyadic \emph{weak} Lebesgue point of    $g$, then $z$ is in fact a dyadic Lebesgue point of  $g$.  
\end{lemma}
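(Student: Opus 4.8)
The plan is to show that $\lim_n E(g, Z\uhr n)$, which exists by the weak hypothesis, must equal $g(z)$. The key point is that $g$ is lower semicomputable, so small values of $g$ near $z$ cannot ``hide'': if the dyadic average converged to some value strictly below $g(z)$, or strictly above, we could exploit this to build a Martin-L\"of test capturing $z$, contradicting ML-randomness.

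First I would handle the easier inequality. Since $g$ is lower semicomputable, write $g = \sup_k g_k$ for a uniformly computable increasing sequence of (say, rational-valued lower semicontinuous, or even simple) functions $g_k \le g$. Lower semicontinuity of $g$ at $z$ gives, for each $\varepsilon > 0$, a neighborhood of $z$ on which $g > g(z) - \varepsilon$ (when $g(z) < \infty$; the case $g(z) = \infty$ needs separate treatment, using that $g_k(z) \to \infty$ and each $g_k$ is lower semicontinuous). Averaging over a small dyadic interval $(\sss)$ containing $z$ then forces $E(g, \sss) \ge g(z) - \varepsilon$ eventually along $Z$, so $\lim_n E(g, Z\uhr n) \ge g(z)$. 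This direction uses only lower semicontinuity, not randomness.

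For the reverse inequality $\lim_n E(g, Z\uhr n) \le g(z)$, I would argue by contradiction and build an integral test (or ML-test) that $z$ fails. Suppose for some rational $q$ we have $g(z) < q$ yet $E(g, Z\uhr n) \ge q$ for all large $n$. The set $V = \{x : g(x) < q\}$ is co-effectively-open (its complement $\{g \ge q\}$ need not be open, but $\{g > q'\}$ is effectively open for rational $q' $, so one works with $q' $ slightly below $q$ and the approximations $g_k$). Consider, for each $k$, the function $h_k$ that on each dyadic interval $(\sss)$ of length $2^{-k}$ equals the constant $E(g\cdot 1_{\{g \ge q\}}, \sss)\cdot 2^k \cdot \mathbf 1_{(\sss)}$ restricted appropriately — more cleanly: let $\+ S^k = \bigcup\{(\sss) : |\sss| = k,\ E(g\cdot \mathbf 1_{\{g\ge q\}},\sss) \ge \delta\}$ for suitable rational $\delta > 0$. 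Because $g\cdot \mathbf 1_{\{g \ge q\}}$ is still an integral test up to a nullset adjustment (it is the lower semicomputable $\sup_k (g_k - q)^+$ when restricted, essentially) and has finite integral, the weight of $\+ S^k$ is bounded, giving a Martin-L\"of test; and since $g(z) < q$, the contribution of the ``$g \ge q$'' part to the average $E(g,\sss)$ must come from the neighborhood of $z$ where $g$ is small contributing nothing, forcing the high-$g$ mass to sit in a set $z$ belongs to infinitely often. Thus $z \in \bigcap_k \+ S^k$, contradicting ML-randomness of $z$ via Theorem~\ref{MLIntegral}.

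The main obstacle I expect is making the last contradiction precise: one must separate, inside the average $E(g,\sss)$ for $\sss \prec Z$, the mass coming from $\{g < q\}$ (which is bounded by $q$) from the mass coming from $\{g \ge q\}$, and show the latter is bounded below by a fixed positive amount along infinitely many $n$ when $\lim_n E(g,Z\uhr n) > g(z)$; then show that $\sum_k \leb(\+ S^k)$ or the analogous test weight is finite, using $\int g\, d\leb < \infty$ and that $g^{-1}(\{\infty\})$ is null so it does not interfere. Handling $g(z) = \infty$ is a minor separate case: then the weak limit is required to be finite by definition of weak Lebesgue point, but lower semicontinuity already forces $\lim_n E(g, Z\uhr n) = \infty$, so $z$ is simply not a weak dyadic Lebesgue point and the hypothesis is vacuous. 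I would state this observation explicitly at the start to reduce to the case $g(z) < \infty$.
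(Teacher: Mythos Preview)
Your argument for $\lim_n E(g, Z\uhr n) \ge g(z)$ via lower semicontinuity is correct and a bit cleaner than the paper's route through step-function approximations. (Also, $g(z) < \infty$ is automatic from ML-randomness of $z$ by Theorem~\ref{MLIntegral}, so that case split is not needed.)

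The reverse inequality has a genuine gap --- exactly the obstacle you flag without resolving. Your sets $\+ S^k = \bigcup\{(\sss) : |\sss| = k,\ E(g \cdot \mathbf{1}_{\{g > q'\}}, \sss) \ge \delta\}$ do contain $z$ for all large $k$, since the part of $E(g,\sss)$ coming from $\{g \le q'\}$ is bounded by $q'$. But the only measure bound available is the Markov estimate $\leb(\+ S^k) \le \delta^{-1} \int g \cdot \mathbf{1}_{\{g > q'\}}\, d\leb$, a constant independent of $k$. So $\seq{\+ S^k}$ is neither a Martin-L\"of test nor a Solovay test: the same high-$g$ mass is counted at every scale, and integrability of $g$ alone cannot make these measures shrink. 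The paper supplies the missing idea, a telescoping construction. Approximating $g$ from below by dyadic step functions $g_s$, it builds a hierarchy of string--stage pairs $S_n \sub \strcantor \times \omega$: from $(\sigma, s) \in S_{n-1}$ one passes to prefix-minimal extensions $\tau \succ \sigma$ at which some later $g_t$ has $E(g_t, \tau) > q$, recording the witnessing stage $t$. The integral test $h$ sums bumps of height $q - E(g_s, \tau)$ over $[\tau]$, where $s$ is the parent's stage; the mass at level $n$ is bounded by $\sum_{(\sigma, s) \in S_{n-1}} \int_{[\sigma]}(g - g_s)\,d\leb$, and the sum over all levels telescopes to at most $\int g\,d\leb$. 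Along $Z$ there is a string $\tau_n \prec Z$ in every $S_n$ with $E(g_s, \tau_n) = g_s(z) \le g(z)$, so $h(z) \ge \sum_n (q - g(z)) = \infty$. Charging only the \emph{increment} of $g$ since the parent's stage --- rather than the full high-$g$ mass at every level --- is what your sketch lacks.
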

\begin{proof}
Let $\seq{g_s} \sN s$ be an increasing computable sequence of step functions
with dyadic points of discontinuity and rational values
such that $\sup g_s(z)=g(z)$ for each dyadic irrational
(see Miyabe~\cite[Lemmas 4.6, 4.8]{Miyabe:12}, a variant of \cite[Prop.\ 2]{Vyugin:98}).
Then, there is a non-decreasing computable function $u\colon \, \NN \to \NN$ such that for each $\sigma$ with  $|\sigma|\ge u(s)$
\bc $E(g_s,\sigma)=E(g_s,\sigma0)=E(g_s,\sigma1)$. \ec 
Unless $z$ is a dyadic rational,
we have
$g_t(z)  =\lim_n E(g_t,Z\uhr n)$,
where, as usual, $0.Z$ is the binary expansion of $z$.

By hypothesis,  $\lim_n E(g,Z\uhr n)=:r$ exists. Clearly
$g(z) \le r$, because for each $t$
\begin{equation} \label{eqn: approx below}  g_t(z) =  \lim_n E(g_t,Z\uhr n)  \le   \lim_n E(g, Z \uhr n). \end{equation}

 Suppose for  a contradiction  that $g(z)< r$, and 
let $q$ be a rational number such that $g(z)<q<r$.
We build an integral test $h$ such that $h(z)=\infty$, which contradicts our assumption that $z$ is ML-random.
To do so, we  define a  uniformly c.e.\ sequence of sets  $S_n\subseteq\strcantor \times\omega$.
Let $S_0=\{(\estring,0)\}$.
Suppose now that  $n\ge1$ and $S_{n-1}$ has been defined. Uniformly in   $(\sigma,s)\in S_{n-1}$,   let $B \sub  \strcantor $ be  a c.e.\ antichain  of strings of length $\ge u(s)$ such that 
\[ \Opcl B = \Opcl  {\{ \tau \succ \sigma \colon \, |\tau| \ge u(s) \lland  \ex t \, E(g_t, \tau ) > q \}}. \]
For each $\tau \in B$ let $t>s$ be the least corresponding stage and put $\la \tau, t \ra $ into $S_n$.

Let $\mathbf{1}_A$ denote the characteristic function of a set $A$. For each $(\tau,t)\in S_n$,
let
\[h_\tau=(q-E(g_s,\tau))\mathbf{1}_{[\tau]}\]
where $(\sigma,s)\in S_{n-1}$ and $\sigma\prec\tau$.
We define $h$ by
\[h=\sum_n\sum_{(\tau,t)\in S_n}h_\tau.\]

We aim to show that $h$ is an integral test and $h(z)=\infty$. So $z$ is not ML-random  contrary to  our assumption.

To see that $h$ is an integral test,   note that  $h$ is lower semicomputable.
So  it suffices to show that, for every $N$,
\[\sum_{n=0}^N\sum_{(\tau,t)\in S_n}\int h_\tau\ d\leb\le\int g\ d\leb<\infty.\]

If $(\tau, t)\in S_{n}$, for $n>0$,  let $(\sigma_{\tau,t}, s_{\tau,t})\in S_{n-1}$ be the corresponding element for which $(\tau,t)$ is enumerated into $S_{n+1}$.

Notice that
\[\int h_\tau\ d\leb\le(E(g_t,\tau)-E(g_{s_{\tau,t}},\tau))2^{-|\tau|}
=\int_{[\tau]}(g_t-g_{s_{\tau,t}})d\leb,\]
whence
\[\sum_{(\tau,t)\in S_n}\int h_\tau d\leb
\le\sum_{(\tau,t)\in S_n}\int_{[\tau]}(g-g_{s_{\tau,t}})d\leb
\le\sum_{(\sigma,s)\in S_{n-1}}\int_{[\sigma]}(g-g_s)d\leb.\]
Then, in case $N \ge 2$,
\begin{align*}
\sum_{n=N-1}^N\sum_{(\tau,t)\in S_n}\int h_\tau d\leb
\le&\sum_{(\tau,t)\in S_{N-1}}\int_{[\tau]}(g-g_t)d\leb
+\sum_{(\tau,t)\in S_{N-1}}\int_{[\tau]}(g_t-g_{s_{\tau,t}})d\leb\\
\le&\sum_{(\tau,t)\in S_{N-1}}\int_{[\sigma]}(g-g_{s_{{\tau,t}}})d\leb \\
\le&\sum_{(\tau,t)\in S_{N-2}}\int_{[\sigma]}(g-g_t)d\leb. \ \ \ 
\end{align*}
By iterating  this argument for sums starting at $N-2, N-3, \ldots, 2$, we have
\[\sum_{n=0}^N\sum_{(\tau,t)\in S_n}\int h_\tau d\leb
\le\sum_{(\tau,t)\in S_0}\int_{[\tau]}(g-g_s)d\leb=\int g\ d\leb<\infty.\]

Finally, since $\lim_n E(g,Z\uhr n)=r>q$,  for each $n$
there exists $(\tau_n,t_n)\in S_n$ such that $\tau_n\prec z$.
Then
\[h(z)=\sum_n (q-E(g_s,\tau_n))
\ge\sum_n(q-g(z))=\infty.\]
\end{proof}

\begin{remark} The proofs of Lemma \ref{lem:weak-l} and of  Theorem \ref{thm:Madison}    are related. In the notation of Lemma \ref{lem:weak-l}, we have a left-c.e.\ martingale  $L(\sss) = E(g, \sss)$ and uniformly computable martingales  $L_s( \sss)= E(g_s, \sss)$ so that $L(\sss) = \sup_s L_s(\sss)$. By definition of the $g_s$ as dyadic step functions, we have a computable function $u$ on $\NN$ such that $L_s (\tau)= L_s(\tau\uhr {u(s)})$ whenever $|\tau| \ge u(s)$. Let us say that  a left-c.e.\ martingale $L$ of this kind is  \emph{stationary in approximation}.   The obvious inequality  

\[ \sup_s L(Z\uhr {u(s) }) \le \liminf_n L(Z\uhr n) \]
corresponds to $f(z) \le r$ before~(\ref{eqn: approx below}).  
If $Z$ is density random then $\lim_n L(Z\uhr n)$ exists, and equals $\sup_s L(Z\uhr {u(s) }) $ by an argument similar to the one in the proof of Lemma \ref{lem:weak-l}.
\end{remark}

\subsection{Effective Lebesgue Differentiation Theorem via  lower semi-computability} \label{ss:Leb-lowersemi-comp}

We show that density randomness is the same as being a Lebesgue point of each integral test. We use as a basic fact: if $g$ is a non-negative integrable function, then $\sss \to E(g, \sss)$ is a martingale. By definition, $z$ is a weak dyadic  Lebesgue point of $g$ iff this martingale converges along~$Z$.
\begin{theorem}\label{th:density-lebesgue}
The following are equivalent for $z\in[0,1]$:
\begin{enumerate}
\item $z$ is density random.
\item $z$ is a dyadic Lebesgue point of each integral test.
\item $z$ is a Lebesgue point of each integral test.
\end{enumerate}
\end{theorem}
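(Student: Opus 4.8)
The plan is to prove the cycle $(1)\Rightarrow(2)\Rightarrow(3)\Rightarrow(1)$, with the main work in the first implication. For $(2)\Rightarrow(3)$: assume $z$ is a dyadic Lebesgue point of every integral test $g$, and suppose toward a contradiction that $z$ fails to be a (full) Lebesgue point of some integral test $g$, i.e.\ the averages $\frac{1}{\lambda(Q)}\int_Q g$ over arbitrary open intervals $Q\ni z$ do not converge to $g(z)$. The strategy is the same idea used elsewhere in the paper (e.g.\ Proposition~\ref{pro:interval c.e.} and Subsection~\ref{ss:lower derivative}): pass through $1/3$-shifted dyadic intervals. Since $z$ is density random, so is $z-1/3$, so by Theorem~\ref{th:ML-dyadic-full} and Theorem~\ref{thm:Madison} every left-c.e.\ martingale converges along the binary expansions of both $z$ and $z-1/3$. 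Applying hypothesis $(2)$ to $g$ (genuinely dyadic) and to the shifted function $\widehat g(x)=g(x+1/3)$ gives convergence of $E(g,\cdot)$ along $Z$ and of $E(\widehat g,\cdot)$ along $Y$ (where $0.Y=z-1/3$); a geometric argument via Lemma~\ref{fact:geom}, showing that any small interval $Q\ni z$ is sandwiched between a dyadic and a $1/3$-shifted dyadic interval, then forces the limits to agree and to equal $g(z)$, giving the contradiction. The implication $(3)\Rightarrow(2)$ is trivial since dyadic intervals are among the intervals allowed in Definition~\ref{Leb point}, and $(1)\Rightarrow(3)$ would then follow once $(1)\Rightarrow(2)$ is established; alternatively one proves $(3)\Rightarrow(1)$ directly (see below), closing a shorter cycle.

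For $(2)\Rightarrow(1)$ it suffices to prove $\neg(1)\Rightarrow\neg(2)$, i.e.\ that if $z$ is not density random then some integral test fails to have $z$ as a dyadic Lebesgue point. By Theorem~\ref{thm:Madison}, if $z$ is not density random then either $z$ is not ML-random, or some left-c.e.\ martingale $M$ diverges along $Z$. In the first case, by Theorem~\ref{MLIntegral} there is an integral test $g$ with $g(z)=\infty$, so $z$ is certainly not a Lebesgue point of $g$. In the second case the task is to convert the oscillating left-c.e.\ martingale $M$ into an integral test $g$ whose dyadic averages $E(g,Z\uhr n)=M(Z\uhr n)$ (or something that tracks $M$) also oscillate. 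The natural candidate is the Radon--Nikodym-style density: if $M$ were the dyadic martingale of an $L_1$ function this would be immediate, but a left-c.e.\ martingale need not arise this way. The fix is to use the stationary-in-approximation structure: writing $M=\sup_s M_s$, build a lower semicomputable $g$ as a sum of "patches" $h_\tau$ indexed by a c.e.\ family of strings where $M$ has increased, exactly as in the construction in the proof of Lemma~\ref{lem:weak-l}, arranged so that $\int g\,d\lambda\le M(\estring)<\infty$ and so that $E(g,Z\uhr n)$ exceeds $\liminf_n M(Z\uhr n)+\varepsilon$ infinitely often while also dipping back down infinitely often. This reuses the bookkeeping already developed in Lemma~\ref{lem:weak-l} and in the Madison-test argument of Theorem~\ref{thm:Madison}.

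The main obstacle is this last construction: producing from a divergent left-c.e.\ martingale a genuine integral test (nonnegative, lower semicomputable, $L_1$-bounded) whose dyadic averages still diverge along $Z$. Two difficulties must be handled: (i) keeping $\int g<\infty$ despite $M$ potentially being only bounded in $\limsup$ over certain subsequences and unbounded elsewhere — one should reduce to a martingale that genuinely oscillates between two finite values $c<d$ along $Z$ (using Dubins' inequality and an upcrossing reduction as in Remark~\ref{rem:Doob Madison}, which lets us assume the oscillation is bounded); and (ii) ensuring lower semicomputability of $g$ while its integral-average at a string reproduces the left-c.e.\ value of $M$ there — handled by the $S_n$-layered c.e.\ construction of Lemma~\ref{lem:weak-l}, whose weight/integral estimates transfer almost verbatim. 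Once $g$ is built, $E(g,Z\uhr n)$ inherits the oscillation of $M$ along $Z$, so $z$ is not a weak dyadic Lebesgue point of $g$, completing $\neg(1)\Rightarrow\neg(2)$ and hence the theorem.
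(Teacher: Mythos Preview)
Your plan has two real gaps and one unnecessary detour.

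\textbf{The missing implication $(i)\Rightarrow(ii)$.} You announce this as ``the main work'' but never say a word about how to do it. The paper's argument here has two steps: first, $\sigma\mapsto E(g,\sigma)$ is a left-c.e.\ martingale, so by Theorem~\ref{thm:Madison} it converges along $Z$, making $z$ a dyadic \emph{weak} Lebesgue point of $g$. Second---and this is where the substance is---one must upgrade from weak to genuine dyadic Lebesgue point, i.e.\ show the limit equals $g(z)$. That is exactly Lemma~\ref{lem:weak-l}, whose proof is a non-trivial integral-test construction. Your proposal never touches this step.

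\textbf{The detour in $(ii)\Rightarrow(i)$.} You propose to go through Theorem~\ref{thm:Madison}: if $z$ is not density random and is ML-random, take a divergent left-c.e.\ martingale $M$ and manufacture an integral test $g$ whose dyadic averages also diverge. This construction is genuinely hard as stated, and your sketch does not make it work: the oscillation of $M$ may live entirely in the singular part of the associated measure $\mu_M$, so there is no reason a lower semicomputable $L_1$ density should inherit it. The paper bypasses all of this with a one-line observation: if $\mathcal C$ is a $\Pi^0_1$ class containing $z$ with dyadic density $<1$, then $g=1-\mathbf 1_{\mathcal C}$ is already an integral test, $g(z)=0$, and $\limsup_n E(g,Z\uhr n)=1-\ul\varrho_2(\mathcal C|Z)>0$, so $z$ is not a dyadic Lebesgue point of $g$. (ML-randomness of $z$ follows immediately from $g(z)<\infty$ for all integral tests, via Theorem~\ref{MLIntegral}.)

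\textbf{The route for $(ii)\Rightarrow(iii)$.} Rather than redoing the $1/3$-shift porosity argument inside this proof, the paper simply observes that $f(x)=\int_0^x g\,d\lambda$ is interval-c.e., invokes Theorem~\ref{thm:interval left-c.e. MG and derivative} (using $(ii)\Rightarrow(i)$, already established) to get $f'(z)$ exists, and reads off that the full averages converge to the dyadic limit $g(z)$. Your sandwiching sketch via Lemma~\ref{fact:geom} would essentially re-prove Theorem~\ref{thm:interval left-c.e. MG and derivative} for this special $f$; it is not wrong in spirit, but it duplicates work already packaged in Section~\ref{s:STACS14}.
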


 We could equivalently formulate (ii) and (iii) in terms of integrable lower semicomputable functions, rather than the seemingly  more restricted integral tests. For, any lower semicontinuous function on a compact domain is bounded below. So any integrable lower semicomputable function on $[0,1]$  becomes an integral test after adding a constant. 

\begin{proof} (ii) $\Rightarrow$ (i).   By definition  $g(z)$ is finite for each integral test $g$,
whence $z$ is ML-random.

Let $\sC$ be a $\Pi^0_1$ class containing $z$.
Clearly the function $g= 1 - \mathbf 1_\sC$ is an integral test.
Since $z$ is a Lebesgue point for $g$,
$\sC$ has dyadic  density one at~$z$.  Then, by Theorem~\ref{th:ML-dyadic-full}, $z$ is a density-one point.

\vsps

 \n  (i) $\Rightarrow$ (ii).  Let $g$ be an integral test.
Then  $\sss \to E(g,\sss)$    is a left-c.e.\ martingale.
By Theorem \ref{thm:Madison},
$\lim_n E(g,Z\upr n)$ exists,
whence $z$ is a dyadic weak Lebesgue point for $g$.
By Lemma \ref{lem:weak-l},
$z$ is a dyadic Lebesgue point for $g$.

\vsps

\n  (ii) $\RA$ (iii).  
Let $g$ be an integral test.
The function  $f(x)=\int_{[0,x]}g\ d\leb$ is interval-c.e.\ by the aforementioned result of Miyabe~\cite[Lemmas 4.6, 4.8]{Miyabe:12}.
The real  $z$ is density random  by   (ii)$\to$(i), so $f'(z)$ exists by  Theorem~\ref{thm:interval left-c.e. MG and derivative}.
In particular, $\lim_{Q\to z}  {\leb(Q)}^{-1}  {\int_Q f\ d\leb}$ exists
and equals  $\lim_n  2^n \int_{(Z\upr n)}f\ d\leb=f(z)$.
Hence, $z$ is a Lebesgue point for $g$.

The implication  (iii) $\Rightarrow$ (ii) holds by definition.
\end{proof}

\section{Birkhoff's   theorem}
 
We give an effective version, in the c.e.\ setting,  of Birkhoff's Theorem~\ref{thm:Birkh}.   Franklin and Towsner \cite{Franklin.Towsner:14}   considered the case of a not necessarily ergodic measure-preserving operator $T$ on Cantor space $\cantor$ with the uniform measure,  and a lower semicomputable function $f$.    They showed that the   limit of the averages  in the sense of Theorem~\ref{thm:Birkh} exists  for each weakly 2-random point $z$. Under an additional, hypothetical assumption, in \cite[Thm.\ 5.6]{Franklin.Towsner:14}  they were able to obtain convergence on the weaker assumption that $z$ is balanced random in the sense of  \cite{Figueira.Hirschfeldt.ea:15}.

We work in the more general setting of Cantor space $\cantor$ with a computable probability measure $\mu$. That is,    $\mu [\sss]$ is a left-c.e.\  real uniformly in a string $\sss$. For background see  Hoyrup and Rojas \cite{Hoyrup.Rojas:09}

%

 Bienvenu, Greenberg, \Kuc, Nies, and Turetsky \cite[Def. 2.5]{Bienvenu.Greenberg.ea:nd} introduced a randomness notion that implies density randomness.     A   \emph{left-c.e.\ bounded test} over $\mu$ is a nested sequence  $\seq{\+ V_n}$ of uniformly  $\Sigma^0_1$ classes such that for some  computable sequence of rationals $\seq{\beta_n}$ and  $\beta = \sup_n \beta_n \le 1 $ we have $\mu(\+ V_n)\le \beta-\beta_n$ for all $n$. $Z$ fails this test if $Z \in \bigcap_n \+ V_n$.   $Z$ is $\mu$-\emph{Oberwolfach  (OW)} random if it passes each left-c.e.\ bounded test.   
 
Let  $\mu = \leb$ be the uniform measure;  it  is known that balanced randomness  in the sense of \cite{Figueira.Hirschfeldt.ea:15} implies OW randomness, which implies density randomness. The converse of the first implication fails,   as noted in \cite{Bienvenu.Greenberg.ea:nd}: some  low ML-random  is not   balanced random  \cite{Figueira.Hirschfeldt.ea:15}; on the other hand, any such set is OW random.  It is unknown whether the converse of the second implication holds.

 In the following let $\mu$ be a probability measure on $\cantor$  which is computable  in the strong sense of \cite{Vyugin:98} that $\mu [\sss]$ is a computable real uniformly in a string~$\sss$. Note that this is equivalent to the weaker condition above that $\mu [\sss]$ is uniformly left-c.e.,  in the case that  the boundary of any open set is a null set.   
 \begin{theorem}   
 Let  $T$ be   a  computable measure preserving operator  on   $(\cantor, \mu)$.  Let $f$ be    a non-negative   integrable lower semicomputable function on $X$. Let $A_nf(x) $ be the usual ergodic average  \[ \frac 1 n \sum_{i<n} f \circ T^i(x).   \]  For  every $\mu$-Oberwolfach random point $z \in X$,  $\lim_n A_n f(z)$ exists. \end{theorem}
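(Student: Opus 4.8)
The plan is to derive both the finiteness and the convergence of $\lim_n A_nf(z)$ from classical quantitative results of ergodic theory --- the maximal ergodic inequality, and an upcrossing inequality for ergodic averages (due to Bishop) --- each repackaged as a left-c.e.\ bounded test, so that $\mu$-Oberwolfach randomness of $z$ supplies the conclusion. Throughout write $f=\sup_s f_s$, where $\seq{f_s}$ is a uniformly computable nondecreasing sequence of non-negative rational-valued step functions, $f_s$ depending only on the first $m(s)$ bits of its argument for a computable $m$. Since $T$ is computable and $\mu[\sigma]$ is uniformly computable, each average $A_nf_s$ is again such a step function, uniformly in $n,s$; moreover $A_nf=\sup_s A_nf_s$ pointwise, and $\int A_nf\,d\mu=\int f\,d\mu=:\|f\|_1<\infty$ by measure preservation. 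Recall also that $\mu$-Oberwolfach randomness of $z$ implies $z$ is ML-random. Finally fix, for this particular $f$, a natural number $B>\|f\|_1$; such a $B$ exists (although it need not be computable from an index for $f$, which does no harm, since the definition of a left-c.e.\ bounded test only asks for \emph{some} computable sequence $\seq{\beta_n}$).

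\textbf{Finiteness.} First I would show $\sup_n A_nf(z)<\infty$. For each $k$ put $\+ V_k=\{x:\sup_n A_nf(x)>k\}=\bigcup_s\{x:\sup_n A_nf_s(x)>k\}$; since each $f_s$ is a step function, $\{x:\sup_n A_nf_s(x)>k\}$ is open, so $\+ V_k$ is $\Sigma^0_1$ uniformly in $k$, and $\seq{\+ V_k}$ is nested. The maximal ergodic inequality applied to $f$ gives $\mu(\+ V_k)\le\|f\|_1/k\le B/k$. Hence, taking $\beta_k:=1-2^{-k}B$ and $\beta:=\sup_k\beta_k=1$, the subsequence $\seq{\+ V_{2^k}}_k$ satisfies $\mu(\+ V_{2^k})\le B\cdot 2^{-k}=\beta-\beta_k$, so it is a left-c.e.\ bounded test. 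As $z$ is Oberwolfach random it passes this test, so $z\notin\bigcap_k\+ V_{2^k}$, that is, $\sup_n A_nf(z)<\infty$.

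\textbf{Absence of oscillation.} It then suffices to show $\liminf_n A_nf(z)=\limsup_n A_nf(z)$, which I would obtain from the statement that for all rationals $0\le p<q$ the sequence $(A_nf(z))_n$ upcrosses $[p,q]$ only finitely often. Bishop's upcrossing inequality for ergodic averages bounds the expected number $N_{p,q}(x)$ of upcrossings of $[p,q]$ by $(A_nf(x))_n$ by $\|f\|_1/(q-p)\le B/(q-p)$, whence $\mu\{x:N_{p,q}(x)\ge k\}\le B/((q-p)k)$. The obstacle --- and the heart of the proof --- is that $\{x:N_{p,q}(x)\ge k\}$ is only $\Sigma^0_2$: an upcrossing involves the condition ``$A_nf(x)<p$'', which is $\Pi^0_1$ and can be falsified as the approximation $f_s\uparrow f$ improves. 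To obtain a genuine left-c.e.\ bounded test one must run a construction in the spirit of the Madison tests of Section~\ref{s:MG convergence} (cf.\ Remark~\ref{rem:Doob Madison} and the proof of Lemma~\ref{Madison to  MG convergence}): enumerate into the $k$-th component those $x$ for which a $k$-fold upcrossing pattern of $[p,q]$ has been \emph{confirmed} by some stage $s$ using the step functions $A_nf_s$ and times $n\le s$; when a confirmed pattern is later destroyed because some $A_nf_s(x)$ has grown past $p$, re-enumerate only a confirmed pattern with fewer upcrossings, charging the measure spent against the upcrossing inequality localized to the region involved. This should yield, for each $p<q$, a left-c.e.\ bounded test whose failure set contains every $x$ that upcrosses $[p,q]$ infinitely often.

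Granting this construction, Oberwolfach randomness of $z$ guarantees that $(A_nf(z))_n$ upcrosses no rational interval infinitely often, so its $\liminf$ and $\limsup$ coincide; combined with the finiteness above, $\lim_n A_nf(z)$ exists and is finite, as required. I expect essentially all the difficulty to reside in that last construction: verifying that the ``confirmed upcrossing'' process really satisfies the bounded-test requirements --- the analogues of conditions (a) and (b) for Madison tests, together with the correct decay of the measures $\mu(\+ V_k)$ --- is precisely where the tension between the monotone approximation of $f$ and Bishop's inequality has to be resolved, just as the corresponding tension for left-c.e.\ martingales was resolved in the proof of Theorem~\ref{thm:Madison}.
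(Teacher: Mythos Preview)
Your finiteness argument is fine, but the oscillation part has a genuine gap: you never carry out the ``Madison-test-like construction for confirmed upcrossings'', and the analogy is far from routine. Madison tests work because a left-c.e.\ martingale $L$ satisfies the martingale identity, which makes Claim~\ref{cl:tech} go through via Kolmogorov's inequality localized to cylinders. Ergodic averages $A_nf$ are not martingales, Bishop's upcrossing bound is global rather than cylinder-local, and the ``confirmation'' dynamics (tracking a whole sequence of times $n_1<n_2<\cdots$ at a moving stage $s$) are quite different from tracking a single value $L_s(\sigma)$. You have correctly identified that $\{x:N_{p,q}(x)\ge k\}$ is only $\Sigma^0_2$, but you have not shown how to repackage it as a left-c.e.\ bounded test; saying ``this should yield'' is not a proof.

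The paper avoids this entire difficulty by a different and much shorter route. The key observation you are missing is V'yugin's theorem: for each fixed $t$ and each ML-random $z$, the limit $\lim_n A_n f_t(z)$ already exists, since $f_t$ is a computable (simple) function. Hence all the oscillation of $A_nf(z)$ is carried by the tail $A_n(f-f_t)(z)$. Now apply the maximal ergodic inequality not to $f$ but to $g=f-f_t$: the sets
\[
\+ V_t=\{x:\exists n\ A_n(f-f_t)(x)>b-a\}
\]
are uniformly $\Sigma^0_1$, nested in $t$, and satisfy $\mu(\+ V_t)\le (b-a)^{-1}\int(f-f_t)\,d\mu$. Since $\int f_t\,d\mu$ is a uniformly computable real increasing to $\int f\,d\mu$, this is exactly a left-c.e.\ bounded test, and if $A_nf(z)$ oscillates across $[a,b]$ then $z\in\bigcap_t\+ V_t$ (because $\lim_n A_nf_t(z)\le\liminf_n A_nf(z)\le a$, so at the infinitely many $n$ with $A_nf(z)>b$ one gets $A_n(f-f_t)(z)>b-a$). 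No upcrossing analysis is needed at all, and the finiteness step is absorbed into the same argument.
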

 Note that    we do not assume  that the operator $T$ is total. However, being  measure preserving, its domain is  conull. Since  $T$ is computable, the domain is also $\PI 2$. So $T(x)$ is defined whenever $x$ is $\mu$-Kurtz random, namely, $x$ is in no $\PI 1$ class $\+ P$ with $\mu \+ P =0$.
 \begin{proof} By V'yugin~\cite[Prop.\ 2]{Vyugin:98}, we have $f(z) = \sup_t f_t(z)$ for every $z \in X$ that is Kurtz  random  w.r.t.\ $\mu$, where $\seq {f_t}$ is a computable non-decreasing sequence of simple functions (namely, there is a partition of $\cantor$ into finitely many clopen sets such that  $f_t$ has constant rational value on each of them).   
 
Since simple functions are computable,  by the main result of V'yugin~\cite[Thm.\ 2]{Vyugin:98}, $\lim_n A_n f_t(x)$ exists for each $t$ and each ML-random point~$x$. By the maximal ergodic inequality (see e.g.\  Krengel \cite[Cor.\ 2.2]{Krengel:85}), for each non-negative integrable function $g$ and each $r> 0$, we have
   \[ \mu \{x \colon \, \ex n \, A_n g(x) > r\} < \frac 1 r \int g   d\mu.  \]
 Since $z$ is weakly random, for each $n$ the value $A_n f(z)$ exists.   Thus, if $\lim_n A_n f(z)$ fails to exists, there are reals $a<b$ such that $A_n f(z) < a$ for infinitely many $n$, and $A_n f(z) >b$ for infinitely many $n$.   
 
 Let \[ \+ V_t = \{x \colon \, \ex k \, A_k (f-f_t) > b-a\}	. \]
 Then $\seq{\+ V_t}\sN t$ is  a sequence of uniformly  $\Sigma^0_1$   open sets in $X$ with $\+ V_t \supseteq \+ V_{t+1}$.  By the maximal ergodic inequality we have $\mu {\+ V_t} \le 1/(b-a) \int (f-f_t) d\mu$. Finally, $\lim_n A_n f_t(z)$ exists for each $t$, and $\lim_n A_n f_t(z) \le a$. Therefore $z \in \bigcap_t \+ V_t$.    \end{proof}

\section{Density-one points for $\Pi_n^0$ classes and $\Sigma^1_1$ classes} \label{Pin} 
In this section we work again in the setting of Cantor space. So far we have looked at the density of $\PPI$ classes at points. Now we will consider classes of higher descriptional complexity.  Firstly,  we look at $\Pi^0_n$ classes. It turns out that if $Z$ is density random relative to $\emptyset^{(n-1)}$,     then  each  $\Pi_n^0$ class has density $1$ at   $Z$.  

 Thereafter we consider the density of  $\VI$ classes at $Z$. This  complexity  forms a natural  bound for our investigation because $\VI$ classes are measurable (Lusin;  see e.g.\  \cite[Thm.\ 9.1.9]{Nies:book}), which is no longer true within ZFC  for more complex classes.

\subsection{Density of $\Pi^0_n$ classes at a real}  Recall that $Z$ is $n$-random if $Z$ is ML-random relative to $\emptyset^{(n-1)}$. By a $\Pi^{0,X}_1$ class we mean a $\PPI$ class relative to $X$. Every $\Pi_{1}^{0,\emptyset^{(n-1)}}$ class  is $\Pi^0_n$. We show that  for an $n$-random $Z$,  it is sufficient to consider $\Pi_1^{0,\emptyset^{(n-1)}}$ classes  in order to obtain that every $\Pi^0_n$ class has density one at $Z$.  To do so, we rely on  a   lemma about the approximation in terms of measure  of $\Pi^0_n$ classes by $\Pi_{1}^{0,\emptyset^{(n-1)}}$ subclasses. This  can be seen as an effective form of regularity for Lebesgue measure. See \cite[Thm.\ 6.8.3]{Downey.Hirschfeldt:book} for a recent write-up of the proof.

\begin{lemma}[Kurtz \cite{Kurtz:81}, Kautz \cite{Kautz:91}]\label{KK}
From an index of a $\Pi_n^0$ class $P$ and $q\in \mathbb{Q}^+$,   $\emptyset^{(n-1)}$ can compute an index of a $\Pi_1^{0,\emptyset^{(n-1)}}$ class $V\subseteq P$ such that $\lambda(P)-\lambda(V)<q$.
\end{lemma}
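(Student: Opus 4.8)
The plan is to derive the lemma from an ``outer'' counterpart together with a routine parallel induction on the complexity of measures. The outer statement to prove is: from an index of a $\SI n$ class $S$ and a rational $q>0$, $\emptyset^{(n-1)}$ computes an index for a set $W$ that is c.e.\ open relative to $\emptyset^{(n-1)}$, with $S\subseteq W$ and $\leb(W)-\leb(S)<q$. Granting this, if $P$ is $\PI n$ then $P^c$ is $\SI n$ with an index computable from that of $P$, so $\emptyset^{(n-1)}$ produces such a $W\supseteq P^c$; put $V=\cantor\setminus W$, a $\Pi^{0,\emptyset^{(n-1)}}_1$ class with $V\subseteq P$. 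Since $P^c\subseteq W$ we have $W=P^c\sqcup(W\cap P)$, so $\leb(P)-\leb(V)=\leb(W\cap P)=\leb(W)-\leb(P^c)<q$, which is exactly what the lemma asks for.

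I would prove the outer statement by induction on $n$. For $n=1$ a $\SI 1$ class is already c.e.\ open, so $W=S$. For $n\ge 2$, write $S=\bigcup_j C_j$ with $\seq{C_j}$ uniformly $\PI{n-1}$, and allot error $q2^{-j-1}$ to the $j$-th piece; it suffices to cover each $\PI{n-1}$ class $C=C_j$ by some $W_C$, c.e.\ open relative to $\emptyset^{(n-1)}$, with $C\subseteq W_C$ and $\leb(W_C\setminus C)<q2^{-j-1}$, via an index computable from $\emptyset^{(n-1)}$ uniformly in $j$, since then $W=\bigcup_jW_{C_j}$ satisfies $\leb(W\setminus S)\le\sum_j\leb(W_{C_j}\setminus C_j)<q$. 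To cover $C$, write $C=\bigcap_mU_m$ with $\seq{U_m}$ uniformly $\SI{n-2}$ and decreasing. Here I invoke the preliminary fact---proved by a parallel induction on $k$---that the measure of a $\SI k$ class is $\emptyset^{(k-1)}$-left-c.e.\ and the measure of a $\PI k$ class is $\emptyset^{(k-1)}$-right-c.e., uniformly in an index; in particular $\leb(U_m)$ is $\emptyset^{(n-2)}$-computable uniformly in $m$, and $\leb(C)=\inf_m\leb(U_m)$ is $\emptyset^{(n-2)}$-right-c.e. Using $\emptyset^{(n-1)}=(\emptyset^{(n-2)})'$, search for the least $m$ with $\leb(U_{m'})>\leb(U_m)-\varepsilon$ for all $m'$ (with $\varepsilon$ a small fraction of $q2^{-j}$); such an $m$ exists since $\leb(U_m)\downarrow\leb(C)$, and for it $C\subseteq U_m$ and $\leb(U_m\setminus C)<\varepsilon$. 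Finally $U_m$ is a $\SI{n-2}$ class: for $n=2$ it is clopen and for $n=3$ it is c.e.\ open, so $W_C=U_m$; for $n\ge 4$ apply the inductive hypothesis at level $n-2$ to cover $U_m$ by a set c.e.\ open relative to $\emptyset^{(n-3)}$ within a further $\varepsilon$, and take that set as $W_C$.

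The step I expect to be the main obstacle is verifying that the search for $m$ really uses only $\emptyset^{(n-1)}$, not $\emptyset^{(n)}$: read carelessly, ``$\forall m'\,[\leb(U_{m'})>\leb(U_m)-\varepsilon]$'' looks $\Pi^0_2$ over $\emptyset^{(n-2)}$. The resolution is that, for fixed $m$ and each single $m'$, deciding the inequality with a rational slack (compare $\emptyset^{(n-2)}$-computable rational approximations to $\leb(U_{m'})$ and $\leb(U_m)$ to precision, say, $\varepsilon/8$) is a \emph{halting} $\emptyset^{(n-2)}$-computation, so the quantified condition is genuinely $\Pi^0_1$ relative to $\emptyset^{(n-2)}$ and hence decidable by $\emptyset^{(n-1)}$. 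One must then check that the slack still leaves a suitable $m$ to be found and that the $m$ found keeps $\leb(U_m)-\leb(C)$ inside the budget. The rest---bookkeeping which jump computes which measures at each level, and summing the geometric series of error terms---is routine.
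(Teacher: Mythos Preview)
The paper does not include a proof of this lemma; it attributes the result to Kurtz and Kautz and refers the reader to \cite[Thm.~6.8.3]{Downey.Hirschfeldt:book} for a modern write-up. Your argument is the standard one and is correct, including the point you flag as the main obstacle: since each $\lambda(U_m)$ is $\emptyset^{(n-2)}$-computable uniformly in $m$, replacing the real-number inequality by a comparison of rational approximations to precision a fixed fraction of $\varepsilon$ makes the per-$m'$ predicate $\emptyset^{(n-2)}$-decidable, so the universal condition is genuinely $\Pi^0_1(\emptyset^{(n-2)})$ and $\emptyset^{(n-1)}$ can carry out the search for $m$.
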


\begin{theorem}
Suppose $n\geq 1$ and $Z\in \cantor$ is density random relative to $\emptyset^{(n-1)}$.  Let $P$ be $\Pi_n^0$ class such that  $Z\in P$. Then $ \ul \varrho_2(P|Z)=1$.
\end{theorem}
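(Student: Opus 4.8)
The plan is to reduce the $\Pi^0_n$ case to the already-established $\Pi^{0}_1$ (relativized) case by using the Kurtz--Kautz approximation lemma (Lemma~\ref{KK}). Fix a $\Pi^0_n$ class $P$ with $Z \in P$. Using Lemma~\ref{KK}, for each $k \in \NN$ the oracle $\emptyset^{(n-1)}$ can compute an index of a $\Pi_1^{0,\emptyset^{(n-1)}}$ class $V_k \subseteq P$ with $\lambda(P) - \lambda(V_k) < \tp{-k}$. The difficulty is that $Z$ need not lie in any particular $V_k$, so I cannot directly invoke that $V_k$ has density one at $Z$. To get around this, I would consider the ``shrinking complement'' $P \setminus V_k$, whose measure is below $\tp{-k}$, and build from the sequence $\seq{P \setminus V_k}$ a test that $Z$ cannot fail, forcing $Z$ into $V_k$ for some $k$ --- or more efficiently, directly estimate the local measures.

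Concretely, here is the cleaner route I would take. First observe that the complement of $P$ inside $\cantor$ is $\Sigma^0_n$, hence a union $\bigcup_j C_j$ where each $C_j$ is (uniformly, with $\emptyset^{(n-1)}$ as oracle) a $\Pi_1^{0,\emptyset^{(n-1)}}$ class, but that is the wrong direction; instead I use that $\cantor \setminus V_k$ is a $\Sigma_1^{0,\emptyset^{(n-1)}}$ class, so $\cantor \setminus V_k = \bigcup_i [\sigma_i^{(k)}]$ for an $\emptyset^{(n-1)}$-c.e.\ antichain. Now the key inequality: for any string $\eta$,
\[
\lambda_\eta(P) \;\ge\; \lambda_\eta(V_k) \;=\; 1 - \lambda_\eta(\cantor \setminus V_k).
\]
Since $Z$ is ML-random relative to $\emptyset^{(n-1)}$ and $\lambda(\cantor \setminus V_k) = 1 - \lambda(V_k) \le 1 - \lambda(P) + \tp{-k}$, I would like to say the local measure $\lambda_{Z\upr m}(\cantor \setminus V_k)$ cannot stay large. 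But $\cantor \setminus V_k$ is open, so its density at $Z$ is trivially controlled from below, not above --- this is again backwards. The honest fix is to work with $P \setminus V_k =: N_k$, which is $\Pi_n^0$ (intersection of $P$ with the $\Pi^{0,\emptyset^{(n-1)}}_1$-complement... actually $N_k = P \cap (\cantor\setminus V_k)$ is $\Pi^0_n \cap \Sigma^{0,\emptyset^{(n-1)}}_1$, hence $\Sigma^{0,\emptyset^{(n-1)}}_2$; not obviously helpful). So instead: cover $N_k$ by a $\Pi^{0,\emptyset^{(n-1)}}_1$... no.

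Let me state the approach I actually expect to work, which is to run the argument at the level of the relativized $\PPI$ class $V_k$ together with a density-randomness test. Since $Z$ is density random relative to $\emptyset^{(n-1)}$ and in particular ML-random relative to $\emptyset^{(n-1)}$, and since $\seq{\cantor \setminus V_k}\sN k$ has $\lambda(\cantor \setminus V_k) \le (1-\lambda P) + \tp{-k}$, set $W := \lambda(\cantor\setminus P) = 1 - \lambda P$. Then $\seq{(\cantor \setminus V_k)}$ is a sequence of $\Sigma^{0,\emptyset^{(n-1)}}_1$ classes whose measures decrease to $W$; the sets $G_k := (\cantor\setminus V_k) \setminus (\cantor\setminus P) = P\setminus V_k$ have $\lambda(G_k) \le \tp{-k}$, but I need a $\Sigma_1$-presentation of $G_k$, which I don't have. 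The genuinely correct and standard move --- and the one I would write up --- is: pass to the measure-theoretic complement and use that \emph{upper} density of a relativized $\PPI$ class at a relativized-random point is $1$. That is, for the $\Pi^{0,\emptyset^{(n-1)}}_1$ class $V_k$, if $Z \in V_k$ then by Proposition~\ref{prop:PC random upper density}(i) relativized to $\emptyset^{(n-1)}$ (using that $Z$ is ML-random, hence partial computably random, relative to $\emptyset^{(n-1)}$) we get $\ol\varrho_2(V_k \mid Z) = 1$, and combined with density-randomness relative to $\emptyset^{(n-1)}$ applied to the $\PPI$ class $V_k$ we get $\ul\varrho_2(V_k\mid Z) = 1$, whence $\ul\varrho_2(P\mid Z) = 1$ since $V_k \subseteq P$. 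So it suffices to show $Z \in V_k$ for \emph{some} $k$.

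To see that, note $\seq{\cantor \setminus V_k}\sN k$ with measures converging down to $1 - \lambda(P)$: this is \emph{not} a $\emptyset^{(n-1)}$-ML-test in general (measures don't go to $0$), but $Z \in P$ and $Z \notin V_k$ for all $k$ would put $Z$ in $\bigcap_k(P \setminus V_k)$... and I still lack the presentation. The main obstacle is precisely this: manufacturing, from the non-effective fact $Z \in P$, the conclusion that $Z$ enters one of the effectively-given inner approximations $V_k$. I expect the resolution (and this is what I would flesh out) to be: the class $\{X \in \cantor : \forall k\ X \notin V_k\} = \cantor \setminus \bigcup_k V_k$ is a $\Pi^{0,\emptyset^{(n-1)}}_1$ class (intersection of $\PPI$ relativized classes? no --- it's an intersection of $\Sigma_1$-complements, so $\Pi_1^{0,\emptyset^{(n-1)}}$ indeed, being $\bigcap_k (\cantor\setminus V_k)$ with each $\cantor\setminus V_k$ closed relative to the oracle... wait, $\cantor \setminus V_k$ is $\Sigma^{0,\emptyset^{(n-1)}}_1$, i.e.\ open, so the intersection is $\Pi^{0,\emptyset^{(n-1)}}_1$), call it $Q$, and $\lambda(Q) = \lambda(\bigcap_k(\cantor\setminus V_k)) = \lim_k \lambda(\cantor\setminus V_k) = 1 - \lambda(P) = \lambda(\cantor\setminus P)$. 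Since $P \cap Q$ would have measure $\le \lambda(Q) - \lambda(\bigcup_k V_k \setminus (\cantor\setminus P))$... the point is $\lambda(P \cap Q) = 0$ because $P\cap Q \subseteq P \cap (\cantor\setminus V_k)$ for all $k$, so $\lambda(P\cap Q) \le \tp{-k}$ for all $k$. Thus $P \cap Q$ is a $\Pi^0_n$ null class. But $Z$ is ML-random relative to $\emptyset^{(n-1)}$, hence $n$-random, hence avoids every $\Pi^0_n$ null class (a $\Pi^0_n$ null class is contained in a $\emptyset^{(n-1)}$-ML-null set). Therefore $Z \notin P \cap Q$; since $Z \in P$, we conclude $Z \notin Q$, i.e.\ $Z \in V_k$ for some $k$. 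That closes the argument, and I would write it in roughly that order: (1) get the $V_k$ from Lemma~\ref{KK}; (2) form $Q = \bigcap_k(\cantor\setminus V_k)$, note $P\cap Q$ is $\Pi^0_n$ null; (3) use $n$-randomness of $Z$ to get $Z \in V_{k_0}$ for some $k_0$; (4) apply density-randomness relative to $\emptyset^{(n-1)}$ and Proposition~\ref{prop:PC random upper density}(i) relativized, to $V_{k_0}$, obtaining $\ul\varrho_2(V_{k_0}\mid Z) = 1$; (5) conclude $\ul\varrho_2(P \mid Z) = 1$ from $V_{k_0} \subseteq P$. The main obstacle, as indicated, is step (3) --- the passage from the classical membership $Z \in P$ to membership in an effective inner approximation --- and the device that overcomes it is observing that the ``leftover'' $P \cap \bigcap_k(\cantor\setminus V_k)$ is a $\Pi^0_n$ null class, which an $n$-random real must miss.
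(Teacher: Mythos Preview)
Your overall strategy --- find a $\Pi_1^{0,\emptyset^{(n-1)}}$ subclass of $P$ that contains $Z$, then invoke density randomness relative to $\emptyset^{(n-1)}$ --- is exactly the paper's, and your steps (4)--(5) are fine. The gap is in step~(3). You claim that $Q=\bigcap_k(\cantor\setminus V_k)$ is $\Pi_1^{0,\emptyset^{(n-1)}}$, but each $\cantor\setminus V_k$ is \emph{open} (it is $\Sigma_1^{0,\emptyset^{(n-1)}}$), so $Q$ is a countable intersection of open sets, i.e.\ $\Pi_2^{0,\emptyset^{(n-1)}}$, hence only $\Pi^0_{n+1}$. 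Consequently $P\cap Q$ is $\Pi^0_{n+1}$ null, not $\Pi^0_n$ null, and an $n$-random need not avoid $\Pi^0_{n+1}$ null classes (that would require weak $(n{+}1)$-randomness, which density randomness relative to $\emptyset^{(n-1)}$ does not give). So you have not established that $Z\in V_k$ for some $k$, and the sets $P\setminus V_k$ are not obviously coverable by small $\Sigma_1^{0,\emptyset^{(n-1)}}$ sets either, since you only control their measure and not their complexity.

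The paper repairs exactly this point by applying Lemma~\ref{KK} not to $P$ itself but to the \emph{outer} $\Sigma_{n-1}^0$ approximations: write $P=\bigcap_s U_s$ with $\seq{U_s}$ nested and uniformly $\Sigma_{n-1}^0$, and take $Q_s\subseteq U_s$ a $\Pi_1^{0,\emptyset^{(n-1)}}$ class with $\lambda(U_s\setminus Q_s)<2^{-s}$. Now $U_s\setminus Q_s$ \emph{is} $\Sigma_1^{0,\emptyset^{(n-1)}}$ (both factors are), so $\seq{U_s\setminus Q_s}$ is a $\emptyset^{(n-1)}$-Solovay test. Since $Z\in P\subseteq U_s$ for every $s$ and $Z$ is ML-random relative to $\emptyset^{(n-1)}$, we get $Z\in Q_j$ for all $j\ge k$, and $V=\bigcap_{j\ge k}Q_j$ is the desired $\Pi_1^{0,\emptyset^{(n-1)}}$ subclass of $P$ containing $Z$. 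The moral: to get a usable test you must approximate from the $\Sigma$-side so that the ``leftover'' sets are genuinely $\Sigma_1$ relative to the oracle.
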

\begin{proof}

Let    $P=\bigcap_s U_s$ where $\langle U_s: s\in \omega\rangle$ is a nested sequence of uniformly  $\Sigma_{n-1}^0$ classes. It suffices to  show that there exists a $\Pi_1^{0,\emptyset^{(n-1)}}$ class $Q\subseteq P$ such that $Z\in Q$.

We   define a Solovay test relative to $\emptyset^{(n-1)}$. By Lemma \ref{KK}, effectively in $\emptyset^{(n-1)}$  we obtain an  index of a  $\Pi_1^{0,\emptyset^{(n-1)}}$ class $Q_s\subseteq U_s$ such that 
\[\lambda(U_s)-\lambda(Q_s)<2^{-n}.\]  The sequence of uniformly $\Sigma_1^{0,\emptyset^{(n-1)}}$ classes
\[\langle U_s\backslash  Q_s: s\in \NN\rangle\]   is a Solovay test relative to $\emptyset^{(n-1)}$ since $\lambda(U_s \backslash Q_s)\leq 2^{-s}$. Notice $Z\in P\subseteq U_s$ for each $s\in \NN$. Since $Z$ is Martin-Löf random relative to $\emptyset^{(n-1)}$, there exists $k\in \NN$ such that for all $j\geq k$, $Z\in Q_j$.  Since $\langle Q_j: j\geq k\rangle$ is a uniform sequence of $\Pi_1^{0,\emptyset^{(n-1)}}$ classes,    $V = \bigcap_{j \geq k} Q_j$ is itself  a $\Pi_1^{0,\emptyset^{(n-1)}}$ class. Also  $V\subseteq \bigcap_{i\in \NN}U_i=P$ because $Q_j\subseteq U_j$. We have found a $\Pi_1^{0,\emptyset^{(n-1)}}$ class $V\subseteq P$ that contains~$Z$.
\end{proof}
Relativizing Theorem~\ref{thm:Madison} to $\emptyset^{(n-1)}$ we obtain:
\begin{cor}
An $n-$random set $Z$ is a density one point for $\Pi_n^0$ classes if and only if every left-$\emptyset^{(n-1)}$-c.e. martingale converges along $Z$.
\end{cor}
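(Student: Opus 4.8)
The plan is to deduce the corollary by combining the preceding theorem with Theorem~\ref{thm:Madison} relativized to the oracle $\emptyset^{(n-1)}$. Throughout, $Z$ is $n$-random, that is, ML-random relative to $\emptyset^{(n-1)}$. The crucial observation is that at such a $Z$ the family of $\Pi_n^0$ classes and the smaller family of $\Pi_1^{0,\emptyset^{(n-1)}}$ classes determine the same density-one notion; once this is in hand, the relativized Madison theorem takes care of the latter family.

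First I would record the equivalence of the two families. One direction is immediate: every $\Pi_1^{0,\emptyset^{(n-1)}}$ class is $\Pi_n^0$, so if $\ul\varrho_2(P\mid Z)=1$ for every $\Pi_n^0$ class $P$ containing $Z$, then the same holds for every $\Pi_1^{0,\emptyset^{(n-1)}}$ class. Conversely, suppose $\ul\varrho_2(Q\mid Z)=1$ for every $\Pi_1^{0,\emptyset^{(n-1)}}$ class $Q$ containing $Z$. Since $Z$ is ML-random relative to $\emptyset^{(n-1)}$, the relativization of Theorem~\ref{th:ML-dyadic-full} upgrades this to $\ul\varrho(Q\mid Z)=1$ for every such $Q$, so $Z$ is density random relative to $\emptyset^{(n-1)}$. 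The preceding theorem then gives $\ul\varrho_2(P\mid Z)=1$ for every $\Pi_n^0$ class $P$ containing $Z$. Hence, for $n$-random $Z$, being a density-one point for $\Pi_n^0$ classes is equivalent to being a density-one point for $\Pi_1^{0,\emptyset^{(n-1)}}$ classes.

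Next I would invoke Theorem~\ref{thm:Madison} relativized to the oracle $A=\emptyset^{(n-1)}$. Its whole proof relativizes verbatim to an arbitrary oracle $A$: the notion of a Madison test, Lemmas~\ref{lem: density to Madison} and~\ref{Madison to  MG convergence}, and the intermediate computable-randomness step all go through after replacing ``computable'' by ``$A$-computable'', ``left-c.e.'' by ``left-$A$-c.e.'', ML-randomness by ML-randomness relative to $A$, and the witnessing $\Pi^0_1$ classes by $\Pi_1^{0,A}$ classes. For $A=\emptyset^{(n-1)}$ this says that an $n$-random $Z$ is a density-one point for $\Pi_1^{0,\emptyset^{(n-1)}}$ classes if and only if every left-$\emptyset^{(n-1)}$-c.e.\ martingale converges along $Z$. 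Chaining this biconditional with the equivalence of the previous paragraph yields the corollary. The only substantive ingredient is the reduction from $\Pi_n^0$ to $\Pi_1^{0,\emptyset^{(n-1)}}$ classes at an $n$-random point, which is exactly the content of the preceding theorem and ultimately rests on the Kurtz--Kautz approximation Lemma~\ref{KK}; everything else is a routine relativization, so the anticipated difficulty lies only in setting out this chain of reductions cleanly rather than in any new construction.
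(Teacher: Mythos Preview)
Your proposal is correct and takes essentially the same approach as the paper: the paper's one-line proof ``Relativizing Theorem~\ref{thm:Madison} to $\emptyset^{(n-1)}$'' tacitly uses the preceding theorem to pass between $\Pi_n^0$ classes and $\Pi_1^{0,\emptyset^{(n-1)}}$ classes, and you have simply made this chain of reductions explicit. Your extra care in invoking the relativized Khan--Miller result (Theorem~\ref{th:ML-dyadic-full}) to meet the literal hypothesis ``density random relative to $\emptyset^{(n-1)}$'' of the preceding theorem is a detail the paper glosses over, but otherwise the arguments coincide.
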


\subsection{Higher randomness}    The adjective  ``higher'' indicates that    algorithmic tools are replaced by tools from effective descriptive theory.  See e.g.\ \cite[Ch.\ 9]{Nies:book} for background. The work of the Madison group described in Section~\ref{s:MG convergence} can be adapted  to this setting. For a higher version of density randomness,  instead of $\PPI$ classes we now look at $\VI$ classes containing the real in question. Similar to the foregoing case of $\Pi^0_n$ classes,   it does not matter  whether the $\VI$ class  is  closed.

We use the following fact due  to  Greenberg (personal communication). It is a higher analog of the original weaker version of  Prop.\ \ref{prop:PC random upper density}(i) proved in Bienvenu et al.\ \cite[Prop.\ 5.4]{Bienvenu.Greenberg.ea:nd}. The hypothesis on $Z$  could be weakened  to a higher notion  of partial computable randomness as well.

\begin{proposition}[Greenberg, 2013] \label{prop:higher ML random upper density} Let $\+ C \sub \cantor$ be $\VI$. Let  $Z \in \+ C$ be $\QI$-ML-random. Then $\ol \varrho_2(\+ C \mid Z) =1$. \end{proposition}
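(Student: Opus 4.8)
The plan is to mirror the proof of Proposition~\ref{prop:PC random upper density}(i) in the higher setting, reducing the statement to a relativized instance of that proposition. Suppose for a contradiction that $\ol \varrho_2(\+ C \mid Z) < 1$. Then there are a rational $q < 1$ and an $n^* \in \NN$ such that $\leb_\eta(\+ C) < q$ for every $\eta \prec Z$ with $|\eta| \ge n^*$; here one uses that $\+ C$ is $\VI$, so that $\leb_\eta(\+ C)$ is a higher left-c.e.\ real uniformly in $\eta$, approximated from below along the stages $\alpha < \CK$. The strategy is to find, inside $\+ C$, a closed class that is effective relative to a hyperarithmetic oracle and still contains $Z$, and then to apply Proposition~\ref{prop:PC random upper density}(i) relative to that oracle.

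The key step is the following claim: there is a hyperarithmetic real $H$ and a $\Pi^{0,H}_1$ class $\+ K$ with $Z \in \+ K \subseteq \+ C$. This is a $\QI$ analogue of the Day--Miller Lemma~\ref{TrivialRandom}. To prove it one first invokes effective inner regularity for $\VI$ classes --- the $\VI$ analogue of Lemma~\ref{KK} of Kurtz and Kautz --- to write $\+ C$, modulo a null set, as an increasing union $\bigcup_{\alpha < \CK} \+ K_\alpha$ of closed classes, each $\+ K_\alpha$ being $\Pi^0_1$ relative to a hyperarithmetic oracle $H_\alpha$ uniformly in $\alpha$, and with $\leb\bigl(\bigcup_{\alpha} \+ K_\alpha\bigr) = \leb(\+ C)$. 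One then argues that the null ``excess'' $\+ C \setminus \bigcup_{\alpha < \CK} \+ K_\alpha$ is contained in a $\QI$-ML-null set; since $Z$ is $\QI$-ML-random, it follows that $Z \in \+ K_{\alpha_0}$ for some $\alpha_0 < \CK$, and we take $\+ K = \+ K_{\alpha_0}$ and $H = H_{\alpha_0}$.

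Granting the claim, the proof concludes quickly. From $\+ K \subseteq \+ C$ we get $\leb_\eta(\+ K) \le \leb_\eta(\+ C) < q$ for every $\eta \prec Z$ with $|\eta| \ge n^*$, hence $\ol \varrho_2(\+ K \mid Z) \le q < 1$. On the other hand, $Z$ is $\QI$-ML-random and $H$ is hyperarithmetic, so every $H$-ML-test is trivially a $\QI$-ML-test, whence $Z$ is ML-random --- and therefore partial computably random --- relative to $H$. Applying Proposition~\ref{prop:PC random upper density}(i) relative to $H$, to the $\Pi^{0,H}_1$ class $\+ K$ and the point $Z \in \+ K$, gives $\ol \varrho_2(\+ K \mid Z) = 1$, a contradiction. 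To obtain the weakening mentioned after the statement one does not reduce but transcribes directly: build the partial computable martingale from the proof of Proposition~\ref{prop:PC random upper density}(i), now betting to remain inside $\+ K$ and using clopen approximations computed from $H$; this produces a higher partial computable martingale succeeding on $Z$, contradicting higher partial computable randomness.

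The main obstacle is the key claim, i.e.\ locating the hyperarithmetic closed subclass $\+ K$ with $Z \in \+ K \subseteq \+ C$. The delicacy is that the natural inner approximations $\+ K_\alpha$ need not contain $Z$ --- the $\CK$-stage search only approximates $\leb_\eta(\+ C)$ from below --- so one must use $\QI$-ML-randomness of $Z$, together with the measure-exhaustiveness of inner regularity, to force $Z$ into one of them. Once this is in place, the remainder is a routine transcription of the argument for Proposition~\ref{prop:PC random upper density}(i) into the higher setting.
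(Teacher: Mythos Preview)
Your approach differs substantially from the paper's, and the crucial step is left unjustified. The paper gives a short direct construction of a $\QI$-ML-test that $Z$ fails: starting from $U_0 = \{Z\uhr{n^*}\}$, for each $\sigma \in U_n$ one waits for an ordinal stage $\alpha$ at which $\leb_\sigma(\+ C_\alpha) < q$ (the $\+ C_\alpha$ \emph{decrease} to $\+ C$ since it is the $\Pi^1_1$ complement that is being enumerated), covers $\+ C_\alpha \cap [\sigma] \supseteq \+ C \cap [\sigma]$ by a hyperarithmetic antichain $V$ of extensions of $\sigma$ with $\leb_\sigma(\Opcl V) < r$ for a fixed rational $r \in (q,1)$, and puts $V$ into $U_{n+1}$. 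Then $\leb\Opcl{U_n} \le r^n$ while $Z \in \bigcap_n \Opcl{U_n}$, contradicting $\QI$-ML-randomness. No inner approximation by hyperarithmetic closed sets is needed.

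Your detour through a hyperarithmetic closed $\+ K \sub \+ C$ containing $Z$ is a natural idea, but as you yourself acknowledge, this claim is the whole difficulty, and the sketch you give does not establish it. Two specific gaps: first, the ``$\VI$ analogue of Kurtz--Kautz'' you invoke is not a standard cited result, and you do not indicate how the finite jump-iteration in the arithmetic case is to be replaced along the ordinal stages. Second, and more seriously, even granting an exhausting sequence $\+ K_\alpha \sub \+ C$, the excess $\+ C \setminus \bigcup_\alpha \+ K_\alpha$ is a null $\Sigma^1_1$ set, whereas $\QI$-ML-tests are built from uniformly $\Pi^1_1$ open sets; you give no argument why a null $\Sigma^1_1$ set is captured by such a test. (Incidentally, for a $\Sigma^1_1$ class $\+ C$ the local measures $\leb_\eta(\+ C)$ are approximated from \emph{above} along the stages, not below, since it is the $\Pi^1_1$ complement that gets enumerated --- this slip is harmless for the rest of your argument, but it is exactly the fact that makes the paper's direct construction work.)
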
 
\begin{proof} If $\ol \varrho_2(\+ C \mid Z) < 1$ then there is    a positive rational  $q<1$ and $n^*$ such that for   all $n\ge n^*$ we have $\leb_{Z\upr n}(\+ C)< q$.  Choose a rational $r$ with $q< r<1$. We define $\QI$-antichains in $U_n \sub \strcantor$, uniformly in $n$. Let $U_0 = \{\la Z \uhr {n^*} \ra\}$. Suppose $U_n$ has been defined.  For each $\sss \in U_n$, at a stage $\alpha$ such that $\leb_\sss(\+ C_\alpha) < q$, we obtain effectively  a hyper-arithmetical  antichain $V$   of extensions of $\sss$ such that $\+ C _\alpha \cap [\sss] \sub \Opcl V$ and $\leb_\sss(\Opcl V) < r$. Put $V$ into $U_{n+1}$. 

Clearly $\leb  \Opcl {U_n} \le r^n$ for each $n$. Also, $Z \in \bigcap_n \Opcl{U_n}$, so $Z$ is not $\QI$-ML-random.  \end{proof} 

A martingale $L\colon \strcantor \to \RR^+_0$ is called left-$\QI$ if $L(\sss)$ is a left-$\QI$ real uniformly in $\sss$.  We provide a higher analog of Theorem~\ref{thm:Madison}.
\begin{thm} Let $Z$ be  $\QI$-ML-random. The following are equivalent.

\bi \item[(i)] $\ul \varrho_2(\+ C \mid Z)=1 $ for each $\Sigma^1_1$ class $\+ C$ containing $Z$.
\item[(ii)] $\ul \varrho_2(\+ C \mid Z)=1 $ for each \emph{closed} $\Sigma^1_1$ class $\+ C$ containing $Z$.
\item[(iii)] Each left-$\QI$ martingale converges along  $Z$ to a finite value. \ei
\end{thm}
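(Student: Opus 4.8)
The plan is to transfer the circle of ideas in Section~\ref{s:MG convergence} to the higher setting, systematically reading ``$\QI$'' for ``c.e.'', ``$\VI$'' for ``effectively open'', ``$\Delta^1_1$'' for ``computable'', ``$\QI$-ML-random'' for ``ML-random'', and replacing the approximation stages $s\in\NN$ by stages $\alpha<\CK$ together with the usual $\CK$-approximation machinery. The implication (i)$\,\Rightarrow\,$(ii) is trivial, since a closed $\VI$ class is a $\VI$ class.

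For (iii)$\,\Rightarrow\,$(i), let $\+ C$ be a $\VI$ class with $Z\in\+ C$, and suppose first that $\+ C$ is closed. Then $\cantor\setminus\+ C$ is a $\QI$ open set, so $\sss\mapsto\leb_\sss(\cantor\setminus\+ C)=1-\leb_\sss(\+ C)$ is a left-$\QI$ real uniformly in $\sss$: one obtains it as the supremum, over $\alpha<\CK$, of the $\Delta^1_1$ measures (uniform in a notation for $\alpha$) of the stage-$\alpha$ pieces of that open set. Hence $M(\sss):=1-\leb_\sss(\+ C)$ is a left-$\QI$ martingale, which by (iii) converges along $Z$; thus $\varrho_2(\+ C\mid Z):=\lim_n\leb_{Z\upr n}(\+ C)$ exists. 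Applying Proposition~\ref{prop:higher ML random upper density} to $\+ C$ gives $\ol \varrho_2(\+ C\mid Z)=1$, whence $\ul \varrho_2(\+ C\mid Z)=1$. For an arbitrary $\VI$ class $\+ C\ni Z$ I would first pass to a closed $\VI$ subclass: using that $Z$ is $\QI$-ML-random, run a higher analog of the argument in the $\Pi^0_n$ subsection --- with Lemma~\ref{KK} replaced by an effective inner-regularity statement for $\VI$ classes and the Solovay test taken over $\CK$ --- to produce a closed $\VI$ class $\+ Q\subseteq\+ C$ with $Z\in\+ Q$; then $\ul \varrho_2(\+ C\mid Z)\ge\ul \varrho_2(\+ Q\mid Z)=1$ by the closed case. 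This also settles the non-obvious half of the assertion that it does not matter whether the $\VI$ class is closed.

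The substance of the proof is (ii)$\,\Rightarrow\,$(iii), and here I would carry over the two-step ``Madison test'' argument. Call a \emph{higher Madison test} a $\CK$-approximated sequence $\seq{U_\alpha}_{\alpha<\CK}$ of $\Delta^1_1$ subsets of $\strcantor$ (uniform in a notation for $\alpha$) with $U_0=\ES$, with $\weight(U_\alpha)\le c$ for a fixed $c$ and all $\alpha$, and satisfying the ordinal-stage versions of conditions (a), (b); say $Z$ \emph{fails} it if $Z\uhr n\in U:=\bigcup_\alpha U_\alpha$ for infinitely many $n$. The first step is that a $\QI$-ML-random $Z$ satisfying (ii) passes every higher Madison test: the construction of the auxiliary sets $\+ S^k$ from a failed test in Lemma~\ref{lem: density to Madison} goes through verbatim, except that each $\+ S^k$ is now a $\QI$ open class of measure $\le 2^{-k}$, so $\seq{\+ S^k}$ is a $\QI$-ML-test; since $Z\notin\+ S^k$ for some $k$, the \emph{closed} $\VI$ class $\cantor\setminus\+ S^k$ contains $Z$ and has $\ul \varrho_2(\cantor\setminus\+ S^k\mid Z)\le 1-2^{-k}<1$, contradicting (ii). The second step is that if $Z$ passes every higher Madison test, then every left-$\QI$ martingale $L=\sup_{\alpha<\CK}L_\alpha$ converges along $Z$. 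Each $L_\alpha$ is $\Delta^1_1$, hence $\emptyset^{(\beta)}$-computable for some $\beta<\CK$, and $Z$, being $\QI$-ML-random, is ML-random --- in particular computably random --- relative to every $\emptyset^{(\beta)}$ with $\beta<\CK$, so $\lim_n L_\alpha(Z\uhr n)$ exists for each $\alpha$ and no separate ``computably random'' lemma is needed. If $L$ nonetheless diverges along $Z$, the construction following Lemma~\ref{Madison to  MG convergence}, now with a labelling function $\gamma_\alpha\colon U_\alpha\to\{\text{notations for ordinals}\le\alpha\}$, produces a higher Madison test that $Z$ fails; the weight estimate of Claim~\ref{cl:tech}, being an assertion about ordinary martingales and antichains at a single fixed stage, transfers unchanged.

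The main obstacle is the transfinite bookkeeping in this last construction. One must verify that the sequence $\seq{U_\alpha}$ produced is genuinely a higher Madison test: that its approximation is $\CK$-effective (so that $U$ is $\QI$ and each $\+ S^k$ really is a $\QI$ open class), that $\weight(U_\alpha)$ stays bounded through all ordinal stages, including limits, and that conditions (a) and (b) survive the passage to ordinal stages and ordinal-valued labels --- in the finite-stage proof the observation that a string changes $U$-membership at most $2^{\sssl}$ times is what guarantees that $U$ is decidable, and the correct higher analog of this must be isolated to control the complexity of $U$ and of the $\+ S^k$. A secondary point requiring care is the effective inner-regularity statement for $\VI$ classes invoked in (iii)$\,\Rightarrow\,$(i). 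I expect the remaining ingredients --- the analytic cores of both lemmas, Greenberg's Proposition~\ref{prop:higher ML random upper density}, and the relativized classical facts about computable martingales --- to go through routinely.
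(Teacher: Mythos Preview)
Your approach is essentially the paper's: transfer the Madison-test machinery to the higher setting for (ii)$\Rightarrow$(iii), and use Proposition~\ref{prop:higher ML random upper density} together with convergence of the associated left-$\QI$ martingale for the reverse. The transfinite bookkeeping you flag is precisely what the paper addresses, including adding the continuity condition $U_t(\sss)=\lim_{s<t}U_s(\sss)$ at limit ordinals $t$ and taking the labelling functions $\gamma_s$ to be ordinal-valued and uniformly hyperarithmetical.

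One genuine simplification you miss: the detour through closed classes and higher inner regularity in (iii)$\Rightarrow$(i) is unnecessary. For \emph{any} $\VI$ class $\+ C$, closed or not, $\leb_\sss(\+ C)$ is a left-$\VI$ real uniformly in $\sss$ (this is Lusin's measurability in effective form; see \cite[Thm.~9.1.10]{Nies:book}), so $M(\sss)=1-\leb_\sss(\+ C)$ is already a left-$\QI$ martingale; and Proposition~\ref{prop:higher ML random upper density} is stated for arbitrary $\VI$ classes, not just closed ones. Thus the argument you give ``for the closed case'' in fact establishes (i) directly, and no higher analog of the Kurtz--Kautz lemma or Solovay-test reduction is needed. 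This also explains why (i) and (ii) are equivalent without any separate work.

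A small slip: you set $U:=\bigcup_\alpha U_\alpha$, but condition~(a) allows strings to leave $U_\alpha$, so the union overcounts. As in the classical case, $U(\sss)$ must be the \emph{limit} $\lim_\alpha U_\alpha(\sss)$; this exists because (a) bounds the number of membership changes of each string by $2^{\sssl}$.
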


\begin{proof}  (iii) $\to$ (i): The measure of a $\VI$ set is left-$\VI$ in a uniform way (see e.g.\ \cite[Thm.\ 9.1.10]{Nies:book}). Therefore $M(\sss)=  1- \leb_\sss(\+ C)$ is a left-$\QI$ martingale. Since $M$ converges along $Z$, and since by Prop.\  \ref{prop:higher ML random upper density} $\liminf_n M(Z\uhr n) = 0$,  it converges along $Z$ to $0$.  This shows that $\ul \varrho_2(\+ C \mid Z)=1 $.

\n (ii) $\to$ (iii).   We follow the proof of  the Madison group's    Theorem~\ref{thm:Madison}  given  above. All stages $s$ are now interpreted as computable ordinals.  Computable functions are now  functions   $\OMC \to L_{\OMC}$ with $\Sigma_1$ graph. Constructions are now   assignments of recursive ordinals to instructions.

\begin{definition}  A \emph{$\QI$-Madison test} is a  $\Sigma_1$ over $L_{\omega_1^{CK}}$ function  $\seq {U_s}_{ s< \omega_1^{CK}}$ mapping ordinals to (hyperarithmetical)  subsets of $\strcantor$  such that $U_0 = \ES$,    for each  stage $s$ we have $\weight (U_s)  \le c$ for some constant $c$, and for  all strings $\sss, \tau$, 
\bi \item[(a)]  $\tau \in U_s - U_{s+1} \to \ex \sss \prec \tau \, [ \sss \in U_{s+1} - U_s]$
\item [(b)] $\weight (\sss^\prec \cap U_s) > \tp{-\sssl} \to \sss \in U_s$. 
\ei 
Also $U_t(\sigma) = \lim_{s < t} U_s(\sss)$ for each limit ordinal $t$. 
\end{definition}

The following well-known fact can be proved similar to \cite[1.9.19]{Nies:book}.
\begin{lemma} \label{lem:extend open} Let $\+ A \sub \cantor$ be a hyperarithmetical open set. Given  a rational   $q$ with $q >  \leb A$, we can effectively determine from $\+ A, q$  a hyperarithmetical open $\+ S \supseteq \+ A$ with $\leb \+ S = q$. \end{lemma}
  We provide an analog of Lemma~\ref{lem: density to Madison}.  Its proof is  a variant of the former argument.
\begin{lemma} \label{lem: density to higher Madison} Let $Z$ be a $\QI$ ML-random such that $\ul \varrho_2(\+ C \mid Z)=1 $ for each \emph{closed} $\Sigma^1_1$ class $\+ C$ containing $Z$. Then $Z$ passes each $\QI$-Madison test. \end{lemma}

The sets $\+ A^k_{\sss,s}$  are now hyperarithmetical open sets computed from $k,\sss, s$.  Suppose $\sss \in U_{s+1} - U_s$. The set $\wt {\+ A}^k_{\sss,s}$  is defined as before. To effectively obtain $ \+ A^k_{\sss , s+1}$, we apply Lemma~\ref{lem:extend open} to  add mass from $[\sss]$ to   $\wt  {\+ A}^k_{\sss , s+1}$ in order to ensure that $\leb (  {\+ A}^k_{\sss,s+1}) = \tp{- \sssl -k}$. 

As before, let   $\+ S^k_t = \bigcup_{\sss \in U_t} \+ A^k_{\sss,t}$. Then $\+ S^k_t \sub \+ S^k_{t+1}$ by condition (a) on $\QI$-Madison tests. Clearly $\leb \+ S^k_t \le  \tp {-k} \weight (U_t)  \le \tp{-k}$. So $\+ S^k = \bigcup_{t< \OMC} \+ S ^k_t$ determines  a $\QI$ ML-test. 

By construction 
$\ul \varrho_2(\cantor - \+ S^k \mid Z) \le 1- \tp{-k}$. Since $Z$ is ML-random we have $Z \not \in \+ S^k$ for some $k$. So $\ul \varrho_2(\+ C \mid Z) < 1 $ for the   {closed} $\Sigma^1_1$ class $\+ C = \cantor - \+ S_k$ containing $Z$.

The analog of Lemma~\ref{Madison to  MG convergence} 
 also holds.  
\begin{lemma} Suppose that $Z$ passes each $\QI$-Madison test. Then every left-$\QI$  martingale $L$ converges along $Z$. \end{lemma}
We wrote  the proof  of Lemma~\ref{Madison to  MG convergence}  in such a way  that this works.  If  $L\colon \strcantor \to \RR$ is a  left-$\QI$ martingale, then $L(\sss) = \sup_s L_s(\sss)$ for a   non-decreasing sequence $\seq{L_s}$ of hyperarithmetical martingales computed uniformly from $s< \OMC$.  The    labelling functions $\gamma_s \colon \, U_s \to\OMC$ are now  uniformly hyperarithmetical. 

We may assume that $L_t(\sss) = \lim_{s<t} L_s(\sss)$ for each limit ordinal $t$. This implies  $U_t(\sigma) = \lim_{s < t} U_s(\sss)$ for each limit ordinal $t$ as required in the definition of higher Madison tests.  
%
\end{proof}

%

\def\cprime{$'$}


\begin{thebibliography}{10}

\bibitem{Bienvenu.Day.ea:14}
L.~Bienvenu, A.~Day, N.~Greenberg, A.~Ku{\v{c}}era, J.~Miller, A.~Nies, and
  D.~Turetsky.
\newblock Computing {$K$}-trivial sets by incomplete random sets.
\newblock {\em Bull. Symb. Logic}, 20:80--90, 2014.

\bibitem{Bienvenu.Day.etal:12}
L.~Bienvenu, A.~R. Day, M.~Hoyrup, I.~Mezhirov, and A.~Shen.
\newblock A constructive version of {B}irkhoff's ergodic theorem for
  {M}artin-{L}\"of random points.
\newblock {\em Inf. Comput.}, 210:21--30, 2012.

\bibitem{Bienvenu.Greenberg.ea:nd}
L.~Bienvenu, N.~Greenberg, A.~Ku{\v{c}}era, A.~Nies, and D.~Turetsky.
\newblock Coherent randomness tests and computing the {K}-trivial sets.
\newblock To appear in J. European Math. Society, 2015.

\bibitem{Bienvenu.Hoelzl.ea:12a}
L.~Bienvenu, R.~H\"olzl, J.~Miller, and A.~Nies.
\newblock {D}enjoy, {D}emuth, and {D}ensity.
\newblock {\em J. Math. Log.}, 1450004, 2014.
\newblock 35 pages.

\bibitem{Birkhoff:39}
G.~Birkhoff.
\newblock The mean ergodic theorem.
\newblock {\em Duke Math. J.}, 5(1):19--20, 1939.

\bibitem{Bogachev.vol1:07}
V.I. Bogachev.
\newblock {\em Measure theory. {V}ol. {I}, {II}}.
\newblock Springer-Verlag, Berlin, 2007.

\bibitem{Brattka.Miller.ea:16}
V.~Brattka, J.~Miller, and A.~Nies.
\newblock Randomness and differentiability.
\newblock {\em Transactions of the {AMS}}, 368:581--605, 2016.
\newblock arXiv version at \url{http://arxiv.org/abs/1104.4465}.

\bibitem{Carothers:00}
N.L. Carothers.
\newblock {\em {Real analysis}}.
\newblock Cambridge University Press, 2000.

\bibitem{Day.Miller:14}
A.~R. Day and J.~S. Miller.
\newblock Cupping with random sets.
\newblock {\em Proceedings of the American Mathematical Society},
  142(8):2871--2879, 2014.

\bibitem{Day.Miller:15}
A.~R. Day and J.~S. Miller.
\newblock Density, forcing and the covering problem.
\newblock {\em Mathematical Research Letters}, 22(3):719--727, 2015.

\bibitem{Demuth:75}
O.~Demuth.
\newblock The differentiability of constructive functions of weakly bounded
  variation on pseudo numbers.
\newblock {\em Comment. Math. Univ. Carolin.}, 16(3):583--599, 1975.
\newblock (Russian).

\bibitem{Downey.Hirschfeldt:book}
R.~Downey and D.~Hirschfeldt.
\newblock {\em Algorithmic randomness and complexity}.
\newblock Springer-Verlag, Berlin, 2010.
\newblock 855 pages.

\bibitem{Downey.Nies.ea:06}
R.~Downey, A.~Nies, R.~Weber, and L.~Yu.
\newblock Lowness and {$\Pi\sp 0\sb 2$} nullsets.
\newblock {\em J. Symbolic Logic}, 71(3):1044--1052, 2006.

\bibitem{Durrett:96}
R.~Durrett.
\newblock {\em Probability: theory and examples}.
\newblock Duxbury Press, Belmont, CA, second edition, 1996.

\bibitem{Figueira.Hirschfeldt.ea:15}
S.~Figueira, D.~Hirschfeldt, J.~Miller, Selwyn Ng, and A~Nies.
\newblock Counting the changes of random {$\Delta^0_2$} sets.
\newblock {\em J. Logic Computation}, 25:1073--1089, 2015.
\newblock Journal version of conference paper at CiE 2010.

\bibitem{Franklin.Greenberg.etal:12}
J.~Franklin, N.~Greenberg, J.~S. Miller, and K.~M. Ng.
\newblock Martin-{L}\"of random points satisfy {B}irkhoff's ergodic theorem for
  effectively closed sets.
\newblock {\em Proc. Amer. Math. Soc.}, 140(10):3623--3628, 2012.

\bibitem{Franklin.Towsner:14}
J.~Franklin and H.~Towsner.
\newblock Randomness and non-ergodic systems.
\newblock {\em Moscow Mathematical Journal}, 14:711--714, 2014.

\bibitem{Freer.Kjos.ea:14}
C.~Freer, B.~Kjos-Hanssen, A.~Nies, and F.~Stephan.
\newblock Algorithmic aspects of lipschitz functions.
\newblock {\em Computability}, 3(1):45--61, 2014.

\bibitem{Gacs.Hoyrup:11}
P.~G{\'a}cs, M.~Hoyrup, and C.~Rojas.
\newblock Randomness on computable probability spaces - a dynamical point of
  view.
\newblock {\em Theory Comput. Syst.}, 48(3):465--485, 2011.

\bibitem{Hoyrup.Rojas:09}
M.~Hoyrup and C.~Rojas.
\newblock Computability of probability measures and {M}artin-{L}\"of randomness
  over metric spaces.
\newblock {\em Inform. and Comput.}, 207(7):830--847, 2009.

\bibitem{Kautz:91}
S.~Kautz.
\newblock {\em Degrees of random sets}.
\newblock Ph.{D.} {D}issertation, Cornell University, 1991.

\bibitem{Khan:15}
M.~Khan.
\newblock Lebesgue density and {$\Pi^0_1$}-classes.
\newblock J. Symb. Logic, {T}o appear.

\bibitem{Krengel:85}
U.~Krengel.
\newblock {\em Ergodic Theorems}.
\newblock W.\ de Gruyter, 1985.

\bibitem{Kurtz:81}
S.~Kurtz.
\newblock {\em Randomness and genericity in the degrees of unsolvability}.
\newblock Ph.{D.} {D}issertation, University of Illinois, Urbana, 1981.

\bibitem{Lebesgue:1904}
H.~Lebesgue.
\newblock {\em {Le\c{c}ons sur l'Int\'egration et la recherche des fonctions
  primitives}}.
\newblock Paris: Gauthier-Villars, 1904.

\bibitem{Lebesgue:1909}
H.~Lebesgue.
\newblock Sur les int\'egrales singuli\`eres.
\newblock {\em Ann. Fac. Sci. Toulouse Sci. Math. Sci. Phys. (3)}, 1:25--117,
  1909.

\bibitem{Lebesgue:1910}
H.~Lebesgue.
\newblock Sur l'int\'egration des fonctions discontinues.
\newblock {\em Ann. Sci. \'Ecole Norm. Sup. (3)}, 27:361--450, 1910.

\bibitem{Li.Vitanyi:93}
M.~Li and P.~Vit{\'a}nyi.
\newblock {\em An introduction to {K}olmogorov complexity and its
  applications}.
\newblock Graduate Texts in Computer Science. Springer-Verlag, New York, second
  edition, 1997.

\bibitem{Miller:13}
J.~S. Miller.
\newblock Lebesgue density in {$\Pi^0_1$}-classes.
\newblock Slides, Available at
  \texttt{http://www-2.dc.uba.ar/ccr/talks/miller0201.pdf}, February 2013.

\bibitem{Miller.Nies:06}
J.~S. Miller and A.~Nies.
\newblock Randomness and computability: {O}pen questions.
\newblock {\em Bull. Symbolic Logic}, 12(3):390--410, 2006.

\bibitem{Miyabe:12}
K.~Miyabe.
\newblock {Characterization of Kurtz Randomness by a Differentiation Theorem}.
\newblock {\em Theory of Computing Systems}, 52(1):113--132, 2013.

\bibitem{Morayne.Solecki:89}
M.~Morayne and S.~Solecki.
\newblock Martingale proof of the existence of {L}ebesgue points.
\newblock {\em Real Anal. Exchange}, 15(1):401--406, 1989/90.

\bibitem{Nies:book}
A.~Nies.
\newblock {\em Computability and randomness}, volume~51 of {\em Oxford Logic
  Guides}.
\newblock Oxford University Press, Oxford, 2009.
\newblock 444 pages. Paperback version 2011.

\bibitem{Nies:14}
A.~Nies.
\newblock Differentiability of polynomial time computable functions.
\newblock In Ernst~W. Mayr and Natacha Portier, editors, {\em 31st
  International Symposium on Theoretical Aspects of Computer Science ({STACS}
  2014)}, volume~25 of {\em Leibniz International Proceedings in Informatics
  (LIPIcs)}, pages 602--613, Dagstuhl, Germany, 2014. Schloss
  Dagstuhl--Leibniz-Zentrum fuer Informatik.

\bibitem{Pathak.Rojas.ea:12}
N.~Pathak, C.~Rojas, and S.~G. Simpson.
\newblock Schnorr randomness and the {L}ebesgue differentiation theorem.
\newblock {\em Proc. Amer. Math. Soc.}, 142(1):335--349, 2014.

\bibitem{Pour-El.Richards:89}
M.~Pour-El and J.~Richards.
\newblock {\em Computability in analysis and physics}.
\newblock Perspectives in Mathematical Logic. Springer-Verlag, Berlin, 1989.

\bibitem{Rudin:87}
W.~Rudin.
\newblock {\em Real and complex analysis}.
\newblock McGraw-Hill Book Co., New York, third edition, 1987.

\bibitem{Schnorr:75}
C.P. Schnorr.
\newblock {\em Zuf\"alligkeit und {W}ahrscheinlichkeit. {E}ine algorithmische
  {B}egr\"undung der {W}ahrscheinlichkeitstheorie}.
\newblock Springer-Verlag, Berlin, 1971.
\newblock Lecture Notes in Mathematics, Vol. 218.

\bibitem{Vyugin:98}
V.~V'yugin.
\newblock Ergodic theorems for individual random sequences.
\newblock {\em Theor. Comput. Sci.}, 207(2):343--361, 1998.

\bibitem{Weihrauch:00}
K.~Weihrauch.
\newblock {\em Computable Analysis}.
\newblock Springer, Berlin, 2000.

\end{thebibliography}

\end{document}